\definecolor{red}{rgb}{1,0,0}
\newcommand{\xto}[1]{\xrightarrow{\phantom{a}{#1}{\phantom{a}}}}
\newcommand{\vvirg}{ , \dots , }
\newcommand{\ootimes}{ \otimes \cdots \otimes }
\newcommand{\textfrac}[2]{{\textstyle \frac{#1}{#2}}}
\newcommand{\bft}{\mathbf{t}}
\newcommand{\bfu}{\mathbf{u}}
\newcommand{\bfx}{\mathbf{x}}
\newcommand{\calD}{\mathcal{D}}
\newcommand{\calE}{\mathcal{E}}
\newcommand{\calH}{\mathcal{H}}
\newcommand{\calI}{\mathcal{I}}
\newcommand{\calO}{\mathcal{O}}
\newcommand{\calR}{\mathcal{R}}
\newcommand{\bbA}{\mathbb{A}}
\newcommand{\bbC}{\mathbb{C}}
\newcommand{\bbP}{\mathbb{P}}
\newcommand{\bbT}{\mathbb{T}}
\newcommand{\bbZ}{\mathbb{Z}}
\newcommand{\frakS}{\mathfrak{S}}
\newcommand{\bfrho}{\boldsymbol{\rho}}
\renewcommand{\phi}{\varphi}
\newcommand{\eps}{\varepsilon}
\newcommand{\dashto}{\dashrightarrow}
\renewcommand{\tilde}[1]{\widetilde{#1}}
\renewcommand{\hat}[1]{\widehat{#1}}
\renewcommand{\bar}[1]{\overline{#1}}
\newcommand{\id}{\mathrm{id}}
\DeclareMathOperator{\Sym}{Sym}
\newcommand{\sym}{\mathrm{sym}}
\DeclareMathOperator{\mult}{mult}
\newcommand{\mon}{\mathit{mon}}
\newcommand{\smooth}{\mathrm{smooth}}
\DeclareMathAccent{\wtilde}{\mathord}{largesymbols}{"65}
\newcommand{\res}{\mathit{res}}
\newcommand{\disc}{\mathit{disc}}
\newcommand{\calDisc}{{\calD\mathit{isc}}}
\newcommand{\calRes}{{\calR\mathit{es}}}
\newcommand{\GL}{\mathrm{GL}}
\newcommand{\p}{\mathbb{P}}
\newcommand{\N}{\mathbb{N}}
\newcommand{\C}{\mathbb{C}}
\newcommand{\Gr}{\mathop{\rm Gr}\nolimits}
\newcommand{\T}{\mathbb{T}}
\newcommand{\Sing}{\mathop{\rm Sing}\nolimits}
\newcommand{\ot}{\otimes}
\newtheorem{theorem}{Theorem}[section]
\newtheorem{conjecture}[theorem]{Conjecture}
\newtheorem{proposition}[theorem]{Proposition}
\newtheorem{lemma}[theorem]{Lemma}   
\newtheorem{corollary}[theorem]{Corollary}
\theoremstyle{definition}
\newenvironment{example}
  {\pushQED{\qed}\examplex}
  {\popQED\endexamplex}
\newtheorem{definition}[theorem]{Definition}
\newenvironment{remark}
  {\pushQED{\qed}\remarkx}
  {\popQED\endremarkx}
\renewcommand{\sym}{\mathrm{sym}}
\title[Characteristic polynomials and eigenvalues of tensors]{Characteristic polynomials\\
and eigenvalues of tensors}
\author[F. Galuppi]{Francesco Galuppi}
\address[F. Galuppi]{Institute of Mathematics of the Polish Academy of Sciences, \'Sniadeckich 8, 00-656 Warsaw, Poland (ORCID 0000-0001-5630-5389)}
\email{fgaluppi@impan.pl}
\author[F. Gesmundo]{Fulvio Gesmundo}
\address[F. Gesmundo]{Saarland Informatics Campus, Universität des Saarlandes, Saarbr\"ucken, Germany (ORCID: 0000-0001-6402-021X)}
\email{gesmundo@cs.uni-saarland.de}
\author[E.T. Turatti]{Ettore Teixeira Turatti}
\address[E. Teixeira Turatti]{Department of Mathematics and Statistics, UiT The Arctic University of Norway, Forskningsparken 1 B 485, Troms\o, Norway (ORCID 0000-0003-4953-3994)}
\email{ettore.t.turatti@uit.no}
\author[L. Venturello]{Lorenzo Venturello}
\address[L. Venturello]{Department of Mathematics, University of Pisa, L.go B. Pontecorvo 56127, Pisa, Italy (ORCID 0000-0002-6797-5270)}
\email{lorenzo.venturello@unipi.it}
\subjclass[2020]{15A72; 15A69, 13P15}
\keywords{tensor eigenvalues, characteristic polynomial, resultant}
\begin{document}

\begin{abstract}
We lay the geometric foundations for the study of the characteristic polynomial of tensors. For symmetric tensors of order $d \geq 3$ and dimension $2$ and symmetric tensors of order $3$ and dimension $3$, we prove that only finitely many tensors share any given characteristic polynomial, unlike the case of symmetric matrices and the case of non-symmetric tensors.  We propose precise conjectures for the dimension of the variety of tensors sharing the same characteristic polynomial, in the symmetric and in the non-symmetric setting.
\end{abstract}

\maketitle

\section{Introduction}\label{sec: intro}

The classical spectral theory of matrices plays a major role in many areas of pure and applied mathematics. In the recent years, several generalizations to the setting of tensors have been proposed, all of which specialize to the same notions for tensors of order two. In this work we are interested in a generalization introduced in \cite{Lim} and \cite{Qi} which allows one to extend the definition of characteristic polynomial and eigenvalues to the tensor setting. These notions have applications in spectral hypergraph theory, where spectral properties of the adjacency tensor of a hypergraph carry information on the robustness and other combinatorial features of the hypergraph \cite{OuyQiYua:FirstUnicyclicHypergraphs,YueZhangLuQi:AdjacencySignlessLaplacian,GMV23}. 
\begin{definition}Let $d$ and $n$ be positive integers. Let $\{v_0 \vvirg v_n\}$ be a basis of $\bbC^{n+1}$  and let
\[
T = \sum_{i_1 \vvirg i_d=0} ^nT_{i_1 \vvirg i_d} v_{i_1} \ootimes v_{i_d}\in (\bbC^{n+1})^{\otimes d}.
\]
A non-zero vector $v = x_0v_0+\dots+x_nv_n \in \bbC^{n+1}$ is an \emph{eigenvector} of $T$ with \emph{eigenvalue} $\lambda$ if 
\[
\sum_{i_2 \vvirg i_d=0}^n T_{i_1 \vvirg i_d} x_{i_2} \cdots x_{i_d} = \lambda x_{i_1}^{d-1} \text{ for every $i_1 \in\{0 \vvirg n\}$}.
\]
\end{definition}
When $d = 2$ the definitions of eigenvalue and eigenvector coincide with the classical definitions in the matrix setting. An important feature of this definition is that the notion of eigenvalue is projective: $v$ is an eigenvector of $T$ with eigenvalue $\lambda$ if and only if every non-zero multiple of $v$ is an eigenvector of $T$ with eigenvalue $\lambda$. We point out that there are other definitions of tensor eigenvalues that are not projective, see for instance \cite[Section 2.2.1]{QZ} and \cite{CartSturm}. The \emph{characteristic polynomial} of $T$ is the unique monic univariate polynomial $\phi_T(\lambda)$ whose roots are the eigenvalues of $T$.

A natural problem consists in determining a tensor with prescribed set of eigenvalues, or, more precisely, prescribed characteristic polynomial. An even harder task is to determine \emph{all} tensors with a given characteristic polynomial. This reconstruction problem was studied for instance in \cite{Zay:InvEigenvalueProblem,LDZ} and discussed in broader generality in \cite[Section 3.2]{QiChenChen2018}; in a different setting, a similar problem is discussed in \cite[Section 5.2]{facerecogn}. The problem has a classical solution for matrices: given a monic polynomial $\phi(\lambda)$ with distinct roots, all matrices $T$ such that $\phi_T(\lambda) = \phi(\lambda)$ are conjugate to each other. Moreover, it is easy to exhibit one of them, either simply by considering the diagonal matrix with the roots of $\phi(\lambda)$ as entries, or considering the \emph{companion matrix} associated to $\phi(\lambda)$. If $\phi(\lambda)$ does not have distinct roots, characterizing all matrices such that $\phi_T(\lambda) = \phi(\lambda)$ involves the Jordan canonical form classification of matrices: this is slightly more technical, but still classical and it only requires basic linear algebra.

For tensors of higher order the problem is more challenging. First, the number of eigenvalues is larger than the dimension of the tensor factors. Indeed, as observed in \cite[Corollary 1]{Qi}, as well as in \cite[Theorem 15]{FO} and \cite[Theorem 2.2]{ASS}, if $T \in (\bbC^{n+1})^{\otimes d}$ is generic, then $T$ has 
\[
D(n,d) = (n+1)(d-1)^{n} 
\]
distinct eigenvalues: in other words the characteristic polynomial $\phi_T(\lambda)$ is a monic univariate polynomial of degree $D(n,d)$. Further, while any monic univariate polynomial of degree $n+1$ is characteristic polynomial of some matrix of size $n+1$, for higher order tensors a dimension count shows that the characteristic polynomials form a proper subset in the space of monic univariate polynomials of degree $D(n,d)$, with a small number of exceptions classified in \cite[Theorem 1.1]{YH}. The geometry of this subset is largely mysterious: in general, not even the dimension is known, see \autoref{conj: part sym fibers} below. This piece of information is of major importance for most numerical reconstruction methods, such as gradient and homotopy techniques \cite{BCSS:ComplexityComputation,BurgCu:Condition,timme2021numerical,BGMV:AlgebraicCompressedSensing}. 

In this work, we provide foundational results concerning the locus of tensors with a given (generic) characteristic polynomial. We highlight connections with the theory of resultants, classical intersection theory and projective geometry. We disprove a conjecture proposed in \cite[Section 6.2]{YH}: in particular, we give a lower bound on the dimension of the locus of tensors sharing the same characteristic polynomial as a given generic tensor, see \autoref{lemma: generic orbit in generic fiber}. We propose two conjectures, predicting the exact value of this dimension in general and in the setting of symmetric tensors. For symmetric tensors, we prove the conjecture when $n=1$ and when $(n,d) = (2,3)$: in these cases, we show that for any given tensor $T$, only finitely many tensors $T'$ satisfy $\phi_T (\lambda) = \phi_{T'}(\lambda)$, see \autoref{thm: main}. We make some preliminary investigation for tensors of higher order and in higher dimension, providing evidence for our conjectures. 

In the remainder of the introduction we provide some basic observations in order to state the results more precisely. 

As noticed in \cite[Section 5.2]{multiplicity}, the definition of eigenvalue and eigenvector only depends on the \emph{partially symmetric component} of a tensor $T$. More precisely, let $\bbC^{n+1} \otimes S^{d-1} \bbC^{n+1} \subseteq (\bbC^{n+1})^{\otimes d}$ denote the subspace of partially symmetric tensors; this is the subspace invariant for the action of the symmetric group $\frakS_{d-1}$ which permutes the tensor factors $2 \vvirg d$. Then $T$ and its $\frakS_{d-1}$-equivariant projection $T'$ onto $\bbC^{n+1} \otimes S^{d-1} \bbC^{n+1}$ have the same eigenvalues and eigenvectors. For this reason, in the rest of the paper, we restrict our study to the subspace of partially symmetric tensors.

It is natural to define a \emph{characteristic polynomial map}
\begin{equation}\label{eq:charpoly}
\begin{tabular}{cccc}
$\Phi :$ &$\bbC^{n+1} \otimes S^{d-1} \bbC^{n+1}$ &$\to$ &$\bbC[\lambda]^{\mon}_{D(n,d)}$ \\
&$T$ &$\mapsto$ &$\phi_T(\lambda)$.
\end{tabular}    
\end{equation}
It turns out that the characteristic polynomial $\phi_T(\lambda)$ is a univariate restriction of the \emph{resultant polynomial}, an invariant defined on the entire space $\bbC^{n+1} \otimes S^{d-1} \bbC^{n+1}$; see \autoref{lemma: characteristic polynomial} below. In particular, the map $\Phi$ is algebraic, in the sense that the univariate polynomial $\phi_T(\lambda)$ depends polynomially on the coefficients of $T$. Therefore, the map $\Phi$ is a morphism of quasi-projective varieties.

In \cite[Section 6.2]{YH}, it was conjectured that the generic fiber of the characteristic polynomial map $\Phi$ is finite. This is false, as we will observe in \autoref{sec:symmetries}. There is a subgroup $H \subseteq \GL(\bbC^{n+1}) \times \GL(\bbC^{n+1})$ acting on the space of tensors $\bbC^{n+1} \otimes S^{d-1} \bbC^{n+1}$ and preserving the characteristic polynomial. Intuitively, this is the natural generalization of the group $\GL(\bbC^{n+1})$ acting on a space of matrices by conjugation, which preserves the eigenvalues.  The group $H$ is isomorphic to the semidirect product $(\bbC^\times)^{n+1} \rtimes \frakS_{n+1}$. We expect the generic fibers of the map $\Phi$ to be orbit-closures for the group $H$, therefore we propose the following conjecture.
\begin{conjecture}\label{conj: part sym fibers}
Let $n \geq 2$,  $d \geq 3$ and $(n,d) \notin\{ (2,3), (3,3),(2,4)\}$. Then the generic fiber of $\Phi$ is equidimensional of dimension $n$.
\end{conjecture}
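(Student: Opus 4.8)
\emph{Reduction to an inequality.} By \autoref{lemma: generic orbit in generic fiber} the generic fibre of $\Phi$ already contains an $H$-orbit of dimension $n$: the group $H\cong(\bbC^\times)^{n+1}\rtimes\frakS_{n+1}$ acts on $\bbC^{n+1}\otimes S^{d-1}\bbC^{n+1}$ with the scalar subtorus acting trivially, so its effective part is the $n$-dimensional torus $(\bbC^\times)^{n+1}/\bbC^\times$ up to a finite group, and a generic tensor has finite stabiliser. Setting $N=\dim\bigl(\bbC^{n+1}\otimes S^{d-1}\bbC^{n+1}\bigr)=(n+1)\binom{n+d-1}{d-1}$, the entire content of the conjecture is the reverse inequality $\dim\overline{\im\Phi}\ge N-n$: once that is known, $\Phi$ being dominant onto $\overline{\im\Phi}$ from the irreducible variety $\bbC^{N}$, generic flatness makes the generic fibre equidimensional of dimension $N-\dim\overline{\im\Phi}=n$, which is the remaining assertion.

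\emph{Reduction to a spanning statement.} In characteristic $0$, $\dim\overline{\im\Phi}=\rank d\Phi_T$ at a generic $T$, and by lower semicontinuity of the rank it suffices to exhibit \emph{one} tensor $T_0$ lying in the locus where $\Phi$ is a morphism to polynomials of degree $D(n,d)$ (i.e.\ with $D(n,d)$ simple eigenpairs) and $\rank d\Phi_{T_0}\ge N-n$. For a generic $T$ one has $\phi_T(\lambda)=\prod_{j=1}^{D(n,d)}(\lambda-\lambda_j)$ over the simple eigenpairs $(\lambda_j,[v_j])$; differentiating $f_i(v)-\lambda v_i^{d-1}=0$ and eliminating the variation of $v_j$ through the invertible Jacobian of the eigensystem at $v_j$ gives
\[
\dot\lambda_j(\dot T)=\bigl\langle\, w_j\otimes v_j^{\otimes(d-1)}\,,\;\dot T\,\bigr\rangle,
\]
where $w_j\in\bbC^{n+1}$ is an explicit left null covector (a cofactor/residue expression in $v_j$ and that Jacobian). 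Hence $\ker d\Phi_T=\operatorname{span}\bigl\{\,w_j\otimes v_j^{\otimes(d-1)}:j=1,\dots,D(n,d)\,\bigr\}^{\perp}$, and the conjecture becomes: these $D(n,d)$ partially symmetric tensors span a subspace of $\bbC^{n+1}\otimes S^{d-1}\bbC^{n+1}$ of codimension exactly $n$. That the span sits inside $(\frakh\cdot T)^{\perp}$, a subspace of codimension $n$ where $\frakh=\Lie H$, is precisely the orbit fact of \autoref{lemma: generic orbit in generic fiber}; so one must prove that the $w_j\otimes v_j^{\otimes(d-1)}$ actually fill up $(\frakh\cdot T)^{\perp}$. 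This also pinpoints the excluded cases: $(2,3),(3,3),(2,4)$ are exactly the pairs with $n\ge2$, $d\ge3$ and $(d-1)^n<\binom{n+d-1}{d-1}$, i.e.\ $D(n,d)<N$, where $\ker d\Phi_T$ has dimension at least $N-D(n,d)>n$ regardless, so the stated conclusion is simply false; outside that list $D(n,d)\ge N$ and no such dimension obstruction survives.

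\emph{The obstacle, and how I would attack it.} The spanning statement cannot be checked at a generic tensor, whose eigendata $(\lambda_j,v_j)$ admit no closed form. The plan is to engineer a tractable degeneration: a tensor $T_0$ still carrying all $D(n,d)$ simple eigenpairs but with eigenvectors in explicit position --- e.g.\ one adapted to a monomial/toric structure, a controlled perturbation of a companion-type tensor, or a ``rank-$(n+1)$'' partially symmetric tensor with defining forms $f_i=\ell_i^{\,d-1}$, whose eigensystem collapses to the linear systems $\bigl(L-\diag(\omega_0,\dots,\omega_n)\bigr)x=0$ with $\omega_i^{d-1}=\lambda$ --- so that the covectors $w_j\otimes v_j^{\otimes(d-1)}$ become combinatorially explicit and one can display an $(N-n)\times(N-n)$ minor of maximal rank. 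A more intrinsic alternative is to realise $\overline{\im\Phi}$ by elimination from the incidence variety $\Gamma=\{(T,\lambda,[v]): f_i(v)=\lambda v_i^{d-1}\}\subseteq\bbC^{N}\times\bbC\times\p^{n}$, which is irreducible of dimension $N$ (its projection to $\bbC\times\p^n$ has affine fibres of codimension $n+1$), enriched with the left null data, and to compute its dimension by the resultant/B\'ezoutian intersection theory on $\p^n$ that already produces the count $D(n,d)=(n+1)(d-1)^n$. I expect the genuine difficulty to be exactly the step from the dimension \emph{heuristic}, which explains the value $n$ only through the symmetry group $H$, to a proof that \emph{no further} linear relations among the $w_j\otimes v_j^{\otimes(d-1)}$ occur --- equivalently, that the only first-order deformations of a generic $T$ fixing every eigenvalue are the infinitesimal $H$-action. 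Equidimensionality of the generic fibre then follows from generic flatness, and its identification with an $H$-orbit closure (as predicted in the discussion above) should follow by proving the generic fibre irreducible, for instance via a monodromy argument on $\Gamma$.
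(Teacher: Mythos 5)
The statement you are addressing is labeled a \emph{conjecture} in the paper and is not proved there: the paper establishes only the lower bound $\dim \Phi^{-1}_\bft\Phi_\bft(T) \geq n$ for generic $T$ (Proposition~\ref{lemma: generic orbit in generic fiber}, via the $H$-orbit), and explicitly leaves the matching upper bound open. Your proposal is accordingly not a proof but a programme, and to your credit you say so plainly. Your reductions are sound and consistent with the paper: the semicontinuity-of-rank argument correctly turns the conjecture into the statement that $\rank d\Phi_{T_0} \geq N - n$ at a single tensor $T_0$ with $D(n,d)$ simple eigenpairs; the first-order formula $\dot\lambda_j(\dot T) = \langle w_j\otimes v_j^{\otimes(d-1)},\dot T\rangle$ is the right tensor analogue of the matrix perturbation formula; the identification of $\ker d\Phi_T$ with the annihilator of $\operatorname{span}\{w_j\otimes v_j^{\otimes(d-1)}\}$ is correct once eigenvalues are distinct (since the elementary-symmetric-function map is then a local isomorphism); and your observation that the excluded triples $(2,3),(3,3),(2,4)$ are exactly those with $n\geq 2$, $d\geq 3$, $D(n,d)<N$ matches the classification of dominant cases in \cite[Theorem~1.1]{YH} cited in the paper.

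The genuine gap is the spanning statement itself: that the $D(n,d)$ covectors $w_j\otimes v_j^{\otimes(d-1)}$ fill the full codimension-$n$ annihilator $(\frakh\cdot T)^\perp$, i.e.\ that the only infinitesimal deformations of a generic $T$ preserving the characteristic polynomial are the infinitesimal $H$-translations. You correctly locate this as the crux and propose several degenerations (monomial/toric tensors, sums of powers $\sum v_i\otimes\ell_i^{d-1}$, companion-type tensors) and an incidence-variety/resultant route, but none of these is carried out, and you give no candidate $T_0$ with a verified maximal minor. As it stands the argument does not close; it reformulates the conjecture in differential-geometric terms without establishing the key rank bound. Since the paper itself does not prove this conjecture either, you are not missing a route the authors found — the conjecture remains open — but the proposal should not be read as establishing the result, and the final appeal to ``generic flatness'' and ``monodromy'' for equidimensionality and irreducibility is conditional on the unproved rank estimate.
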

The cases $n=1$, $d = 2$, and $(n,d) \in\{ (2,3),(3,3),(2,4)\}$ are exactly those where the map $\Phi$ is dominant, \cite[Theorem 1.1]{YH}. In these cases the fiber is equidimensional, and its dimension is given by the difference between the dimension of the domain and the one of the codomain. We record this result, in a slightly more general form, in \autoref{corol: generic fiber dominant cases}. Moreover, \autoref{lemma: generic orbit in generic fiber} shows that the value $n$ in \autoref{conj: part sym fibers} is a lower bound for the dimension of the generic fiber.

However, in applied mathematics, finiteness of the fibers of a map is a desirable recoverability property; see for instance the notion of \emph{algebraic identifiability} in algebraic statistics, \cite[Definition 16.1.1]{sullivant}. When it holds, one can apply strong results from geometry to deduce important consequences in the behaviour of numerical methods, as in \cite{BGMV:AlgebraicCompressedSensing}. In order to restrict to a framework where the map $\Phi$ is algebraically identifiable, we notice that in the symmetric setting the analog of the group $H$ is finite. Hence we consider the restriction $\Phi_{\sym}$ of the map $\Phi$ to the subspace of symmetric tensors $S^d \bbC^{n+1} \subseteq \bbC^{n+1} \otimes S^{d-1} \bbC^{n+1}$. 
\begin{conjecture}\label{conj:fibre symmetric}
If $n \geq 1$ and $d \geq 3$, then all fibers of $\Phi_{\sym}$ are finite.
\end{conjecture}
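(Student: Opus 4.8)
The plan is to reduce \autoref{conj:fibre symmetric} to a statement about a single tensor and then attack that statement geometrically. By \autoref{lemma: characteristic polynomial}, for $f\in S^{d}\bbC^{n+1}$ we have
\[
\phi_{f}(\mu)=c\cdot\Res\bigl(\partial_{0}f-\mu x_{0}^{d-1},\dots,\partial_{n}f-\mu x_{n}^{d-1}\bigr)=c\cdot\disc\Bigl(f-\tfrac{\mu}{d}\bigl(x_{0}^{d}+\dots+x_{n}^{d}\bigr)\Bigr)
\]
for a universal nonzero constant $c$; thus the eigenvalues of $f$ are exactly the parameters $\mu$ at which the member $f-\tfrac{\mu}{d}\sum_{j}x_{j}^{d}$ of the pencil spanned by $f$ and the Fermat form is singular. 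Next, if $v$ is an eigenvector of $T$ with eigenvalue $\lambda$ then $v$ is an eigenvector of $cT$ with eigenvalue $c\lambda$, so $\phi_{cT}(\lambda)=c^{D}\phi_{T}(c^{-1}\lambda)$ with $D=D(n,d)$, and the coefficient of $\lambda^{D-i}$ in $\phi_{T}$ is homogeneous of degree $i$ in the entries of $T$. Hence $\Phi_{\sym}$ is equivariant for the scaling action on $S^{d}\bbC^{n+1}$ and the linear $\bbC^\times$-action with weights $1,2,\dots,D$ on the coefficients of $\lambda^{D-1},\dots,\lambda^{0}$ in $\bbC[\lambda]^{\mon}_{D}$, whose unique fixed point is $\lambda^{D}$; passing to quotients, $\Phi_{\sym}$ descends to a rational map $\bar\Phi\colon\p(S^{d}\bbC^{n+1})\dashrightarrow\p(1,2,\dots,D)$, regular away from $\p\bigl(\Phi_{\sym}^{-1}(\lambda^{D})\bigr)$. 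Since the stabiliser in $\bbC^\times$ of any $\phi\neq\lambda^{D}$ is finite, every fibre of $\Phi_{\sym}$ over such a $\phi$ is mapped finitely onto the fibre $\bar\Phi^{-1}([\phi])$; therefore all fibres of $\Phi_{\sym}$ are finite if and only if $\Phi_{\sym}^{-1}(\lambda^{D})=\{0\}$ and all fibres of $\bar\Phi$ are finite.

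Granting $\Phi_{\sym}^{-1}(\lambda^{D})=\{0\}$, that fibre is finite and $\bar\Phi$ is a morphism on all of $\p(S^{d}\bbC^{n+1})\cong\p^{N-1}$, $N=\binom{n+d}{d}$. It is non-constant, since a generic form $f$ has $\phi_{f}(0)=c\cdot\disc(f)\neq0$ while $\phi_{x_{0}^{d-1}x_{1}}(0)=0$ and the vanishing of the constant term is preserved by the weighted scaling. Taking an ample line bundle $L$ on $\p(1,2,\dots,D)$, we get $\bar\Phi^{*}L\cong\oo_{\p^{N-1}}(m)$ for some $m$ (as $\Pic\p^{N-1}=\Z$), with $m\geq0$ because $\bar\Phi^{*}L$ is globally generated and $m\neq0$ because $\bar\Phi$ is non-constant; so $\bar\Phi^{*}L$ is ample, and a proper morphism pulling an ample bundle back to an ample bundle is finite. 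Consequently \autoref{conj:fibre symmetric} is equivalent to the assertion that, for $n\geq1$ and $d\geq3$, \emph{every nonzero $f\in S^{d}\bbC^{n+1}$ has a nonzero eigenvalue} — equivalently, the pencil spanned by $f$ and $x_{0}^{d}+\dots+x_{n}^{d}$ has a singular member other than $f$ itself.

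When $\{f=0\}$ is smooth this last statement is easy: the polar map $\nabla f\colon\p^{n}\to\p^{n}$ is then a morphism of degree $d-1$, and any coincidence point with the degree-$(d-1)$ morphism $\nu\colon\p^{n}\to\p^{n}$, $v\mapsto(v_{0}^{d-1}:\dots:v_{n}^{d-1})$, is an eigenvector with $\nabla f(v)=\lambda\,\nu(v)$ and $\lambda\neq0$. A standard intersection computation shows that the coincidence locus $(\nabla f,\nu)^{-1}(\Delta_{\p^{n}})$ has class $\bigl(\sum_{i=0}^{n}(d-1)^{i}(d-1)^{n-i}\bigr)\,[\mathrm{pt}]=(n+1)(d-1)^{n}\,[\mathrm{pt}]$ in $\p^{n}$, in particular it is nonempty.

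The main obstacle is the remaining case, that of a nonzero $f$ whose partial derivatives share a common projective zero, i.e.\ with $\{f=0\}$ singular — this includes all forms that essentially involve fewer variables. Equivalently, one must rule out that the line joining $[f]$ to the Fermat point meets the discriminant hypersurface only at $[f]$, necessarily with multiplicity $D$. The natural attack is an induction on $n$ combined with a degeneration argument tracking how the singular locus of $f-s\sum_{j}x_{j}^{d}$ evolves as $s$ leaves $0$ — as for $f=x_{0}^{d}$, where the member at $s=1$ acquires a new singularity once the complementary variables degenerate — organised by a stratification of the discriminant by the corank and type of its singular points. Making this work uniformly in $n$ and $d$, rather than only for $n=1$ and $(n,d)=(2,3)$ as in \autoref{thm: main}, is the crux of the conjecture and the step we are unable to carry out in general.
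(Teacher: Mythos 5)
Your reduction is exactly the paper's: the statement is a conjecture, and what the paper actually proves (\autoref{thm: main}) is the special cases $n=1$ and $(n,d)=(2,3)$, via the same equivalence you establish, namely that \autoref{conj:fibre symmetric} holds if and only if $\Phi_{\sym}^{-1}(\lambda^{D(n,d)})=\{0\}$, i.e.\ no nonzero symmetric tensor has every eigenvalue equal to zero (\autoref{prop: regular implies finite}, \autoref{rmk: it is enough to prove that phi is defined everywhere}). Your argument that a non-constant morphism $\bbP^{N-1}\to Y$ with $Y$ projective has finite fibres, via the ample-pullback criterion, is equivalent to the paper's \autoref{prop: regular implies finite}, which instead runs an elementary dimension count with a divisor of $Y$ avoiding a chosen point; both are standard, and the net effect is the same.

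Two remarks. First, the coincidence-locus computation for smooth $\{f=0\}$ is correct but overkill: smoothness gives $\disc(f)\neq 0$, so the constant term $\phi_f(0)$ of the characteristic polynomial is already nonzero and $\phi_f\neq\lambda^{D(n,d)}$ immediately (and, since the leading coefficient $\pm\disc(\bft)$ is nonzero, $\phi_f$ has $D(n,d)$ roots, all nonzero). Second, the genuinely hard singular case that you flag and cannot close is precisely where the paper's contribution lies. It reformulates $\Phi_{\sym}^{-1}(\lambda^{D(n,d)})=\{0\}$ as the emptiness of the variety $\calH_{\calDisc}$ of lines meeting $\calDisc$ at a single point with full multiplicity $D(n,d)$ (\autoref{lemma: only 0 iff H empty}), and then rules out such lines by analysing tangent cones to $\calDisc$: for $n=1$ by an induction on $d$ driven by \autoref{lemma: tangent cone to disc}, which identifies $TC_f\calDisc$ at $f=\prod\ell_i^{e_i}$ as the scheme-theoretic union of the hyperplane powers $H_{w_i}^{\langle e_i-1\rangle}$, so that $L\subseteq TC_f\calDisc$ forces a common linear factor with $g$ and allows one to strip it off; and for $(n,d)=(2,3)$ by a case-by-case analysis over the eight $\GL(W)$-orbits on the discriminant of plane cubics, completed by a \texttt{Macaulay2} computation. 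Making this tangent-cone analysis work for general $(n,d)$ is, as you say, the open problem, and the paper itself leaves it as \autoref{conj:fibre symmetric}.
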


In \autoref{sec: forme binarie} and \autoref{sec: cubiche piane} we prove \autoref{conj:fibre symmetric} in the case $n=1$ and in the case $(n,d) = (2,3)$. This is our main contribution:
\begin{theorem}\label{thm: main}
Let $d\ge 3$. If $n =1$ or $(n,d) = (2,3)$, then all fibers of $\Phi_{\sym}$ are finite. 
\end{theorem}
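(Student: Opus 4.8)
The plan is to reduce \autoref{thm: main} to a statement about \emph{nilpotent} symmetric tensors, and then to settle that statement in each of the two cases. Write $\phi_f(\lambda) = \lambda^{D} + c_1(f)\lambda^{D-1} + \dots + c_D(f)$ with $D = D(n,d)$. Rescaling $f$ by $t\in\bbC^\times$ rescales every eigenvalue by $t$, so each $c_i$ is a homogeneous polynomial of degree $i$ on $S^d\bbC^{n+1}$. Hence, if a fibre $\Phi_{\sym}^{-1}(\psi)=\{f: c_i(f)=a_i \text{ for all } i\}$ is positive-dimensional, its closure in $\bbP(S^d\bbC^{n+1}\oplus\bbC)$ meets the hyperplane at infinity $\bbP(S^d\bbC^{n+1})$, and any point $[f_0]$ of that intersection satisfies $c_i(f_0)=0$ for all $i$, i.e.\ $\phi_{f_0}(\lambda)=\lambda^{D}$; conversely such an $f_0\neq0$ makes $\bbC f_0\subseteq\Phi_{\sym}^{-1}(\lambda^{D})$ infinite. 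So it suffices to prove: if $f\in S^d\bbC^{n+1}$ has all eigenvalues equal to $0$, then $f=0$.

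Next I would reformulate this via discriminants. Set $g:=x_0^{d}+\dots+x_n^{d}$. The eigenvalue equation is $\partial_i(f-\lambda g)(x)=0$ for all $i$, so $\lambda$ is an eigenvalue of $f$ precisely when $V(f-\lambda g)\subseteq\bbP^n$ is singular; and since $\phi_f$ is the univariate restriction of the resultant of the partial derivatives (\autoref{lemma: characteristic polynomial}), $\phi_f(\lambda)$ is a nonzero scalar multiple of $\disc(f-\lambda g)$. Because the coefficient of $\lambda^{D}$ in $\disc(f-\lambda g)$ is $\disc(-g)\neq0$ (the Fermat hypersurface being smooth), the claim becomes: \emph{if $f\neq0$, then $V(f-\lambda g)$ is singular for some $\lambda\neq0$} — equivalently, the pencil spanned by $f$ and the Fermat form contains a singular member besides $V(g)$. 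The smooth case is then immediate: if $V(f)$ is smooth then $\phi_f(0)=\mathrm{const}\cdot\disc(f)\neq0$, so $0$ is not an eigenvalue of $f$.

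For $n=1$, write $f=c\prod_{j=1}^{r}\ell_j^{e_j}$ with distinct linear forms $\ell_j$; by the smooth case one may assume $e_j\geq2$ for some $j$, hence $r\leq d-1$. Then $m:=\gcd(\partial_0 f,\partial_1 f)=\prod_j\ell_j^{e_j-1}$ has degree $d-r$, and $\partial_0 f\cdot x_1^{d-1}-\partial_1 f\cdot x_0^{d-1}=m\,K$ with $\deg K=d+r-2$; here $V(m)$ is exactly the set of eigenvectors with eigenvalue $0$ (the multiple roots of $f$), while $V(K)$ contains every eigenvector with nonzero eigenvalue. Since $\deg K=d+r-2\geq2$ we have $V(K)\neq\emptyset$, and the task is to show $V(K)$ is not contained in the finite set of multiple roots of $f$. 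Writing $q_i:=\partial_i f/m$, so $K=q_0 x_1^{d-1}-q_1 x_0^{d-1}$, the identities $x_0 q_0+x_1 q_1=d\,c\prod_j\ell_j$ and $\ell_j\mid K\Longleftrightarrow\ell_j\mid g$ restrict the factorisation of $K$ enough that a case analysis on the multiplicity pattern $(e_1,\dots,e_r)$ produces a linear factor of $K$ which is not a factor of $f$, hence an eigenvector with nonzero eigenvalue.

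For $(n,d)=(2,3)$, $f$ is a ternary cubic and, by the smooth case, $V(f)$ is a singular plane cubic. I would run through the classification — irreducible with a node or a cusp; a smooth conic together with a secant or a tangent line; three lines, concurrent or not; a line plus a double line; a triple line — and in each case show that the pencil $\langle f,\,x^3+y^3+z^3\rangle$ acquires a singularity at some $\lambda\neq0$, equivalently exhibit an eigenvector of $f$ with nonzero eigenvalue; for the reduced types this is a direct computation in suitable coordinates (after a linear change of variables, which merely replaces the Fermat form by another smooth cubic). The main obstacle is the two non-reduced types $\ell_1\ell_2^2$ and $\ell^3$: there $V(f)$ is singular along a whole line, so the eigenscheme is positive-dimensional (that line consists of eigenvectors with eigenvalue $0$), and one must argue separately that $f$ has, in addition, only finitely many eigenvectors off that line and that at least one of them carries a nonzero eigenvalue. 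This point, together with the multiplicity bookkeeping needed to close the $n=1$ case, is where the real work lies.
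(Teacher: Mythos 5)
Your reduction is sound and, if anything, a touch more economical than the paper's. You pass from the affine map to its fibre's projective closure and read off, at the hyperplane at infinity, a nonzero $f_0$ with $\phi_{f_0}(\lambda)=\lambda^{D}$; the paper instead projectivizes $\Phi_{\sym}$ to a morphism into a weighted projective space and invokes the general fact (its \autoref{prop: regular implies finite}) that a non-constant morphism from $\bbP^N$ has finite fibres. Both arguments hinge on the same observation — each $c_i$ is homogeneous of degree $i$ — and land on the same target: \emph{show that $\phi_f(\lambda)=\lambda^{D}$ forces $f=0$}. The paper further reformulates this (\autoref{lemma: only 0 iff H empty}) as the emptiness of the variety $\calH_{\calDisc}$ of lines meeting $\calDisc$ in a single point; you work instead with pencils $f-\lambda g$, which is the same thing said in coordinates. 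Up to here the two routes are genuinely interchangeable.

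Where the proposal falls short is in the two core cases, and you have honestly flagged this. For $n=1$, your bookkeeping on $m=\gcd(\partial_0f,\partial_1f)$ and $K$ and the identity $\ell_j\mid K\Leftrightarrow\ell_j\mid g$ are all correct, but the ``case analysis on the multiplicity pattern $(e_1,\dots,e_r)$'' is precisely the part that needs an idea, and you do not supply one. The paper sidesteps this entirely: by computing $TC_f\calDisc=\bigcup_j H_{w_j}^{\langle e_j-1\rangle}$ (\autoref{lemma: tangent cone to disc}), it observes that a line $L\in\calH_{\calDisc}$ through $f$ must lie in some hyperplane $H_{w_j}$, which means $f$ and $g$ share the linear factor $\ell_j$; dividing it out produces a line meeting the discriminant of degree-$(d-1)$ forms in one point, and an induction on $d$ (base case $d=3$, done by hand) closes the argument. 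Nothing in your write-up anticipates this descent, and it is unclear your direct factorization of $K$ would terminate as cleanly. For $(n,d)=(2,3)$ you correctly identify the orbit classification and the non-reduced types as the obstruction, but the paper's proof is not a ``direct computation in suitable coordinates'': it first uses $L\subseteq TC_f\calDisc$ to cut down the search space (since $\mult_{\calDisc}(f)\le 8<12$, only lines in the tangent cone can contribute), computes the tangent cone orbit by orbit — including the nonlinear component $\calDisc'_\ell$ for the last two orbits — and then verifies via Macaulay2 that the conditions $c_0=\dots=c_{11}=0$ force $\disc(g)=0$. That is a substantial computation you have not carried out, and I do not see how the non-reduced cases $\ell_1\ell_2^2$ and $\ell^3$ would be handled by the informal argument you sketch without essentially reinventing this. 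In short: the reduction is right, the strategy is right, but the paper's two key technical devices — the induction peeling off a common linear factor for $n=1$, and the tangent-cone restriction plus computer verification for $(2,3)$ — are missing, and they are where the theorem actually gets proved.
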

The proof of \autoref{thm: main} relies on a strong property of projective morphisms, see \autoref{prop: regular implies finite}: in order to prove \autoref{conj:fibre symmetric}, it suffices to prove that the only tensor satisfying $\phi_T(\lambda) = \lambda^{D(n,d)}$ is $T = 0$. To prove this in the case $n=1$, we use a structural result of the discriminant hypersurface in the case of binary forms, which essentially dates back to \cite{Hilb:UberSingDisc}. In the case $(n,d) = (2,3)$, we rely on the classification of plane cubics, see for instance \cite{KoganMaza:ComputationCanonicalFormsTernaryCubics}. 

\subsection{Outlook and future work} A major obstacle in the study of \autoref{conj: part sym fibers} and \autoref{conj:fibre symmetric} is the difficulty in computing symbolic expressions for the resultant and the discriminant polynomials. In particular, a key element of the proof of \autoref{thm: main} is that in the cases $n=1$ and $(n,d) = (2,3)$, we are able to determine the highest possible intersection multiplicity between a line and the discriminant hypersurface. We expect the next easiest cases are the ones classified in \cite[Proposition 13.1.6]{GKZ:DiscResMultDet}, where the resultant polynomial has a determinantal formula.

Regarding the structure of the fiber of $\Phi$ and $\Phi_{\sym}$, it would be interesting to compute the number of irreducible components. From the discussion in \autoref{sec:symmetries}, one would expect that the generic fiber of $\Phi_\sym$ consists of $d^n (n+1)!$ points; in fact, this is a lower bound to the cardinality of the generic fiber. We verify numerically that it is the correct cardinality for $(n,d) \in\{(1,5),(1,6)\}$. However, both for $(n,d)=(1,3)$ and for $(n,d)=(1,4)$, we obtain fibers of cardinality $24$, instead of the expected $6$ and $8$. The value $24$ in the case $(1,3)$ is explained in \autoref{remark: binary cubic surjective}. We could not explain the value $24$ for the case $(1,4)$, but we expect this to reflect the fact that the generic binary form of degree $4$ does not have trivial isotropy group, unlike binary forms of higher degree $d \geq 5$.

We propose the study of the variety of characteristic polynomials, that is the (closure of the) images of the maps $\Phi$ and $\Phi_\sym$, in the spirit of \cite[Section 2]{reconstructionsing}, which studies the variety of singular values. Answering \autoref{conj: part sym fibers} and \autoref{conj:fibre symmetric} would provide the dimension of these varieties. We record some forward-looking results, following \autoref{thm: main} and computer investigation:
\begin{itemize}[leftmargin=*]
\item If $(n,d)= (1,3)$, then $\Phi_\sym$ is surjective. This is the only case where $\Phi_\sym$ is dominant for $d\geq 3$. 
\item If $(n,d) = (1,4)$, then the (closure of the) image of $\Phi_\sym$ is a hypersurface of degree $30$ in $\bbC^{6}$. The degree was first obtained numerically, then verified symbolically using standard interpolation methods.
\item If $(n,d) = (1,5)$, then the (closure of the) image of $\Phi_\sym$ is a variety of codimension $2$ in $\bbC^8$. Numerical experiments performed using classical monodromy in \texttt{HomotopyContinuation.jl} \cite{BreTim:HomotopyContinuation} show that its degree is at least 2010.
\end{itemize}

It would be fascinating to link the characteristic polynomial of a tensor to its rank. For matrices, the rank is the number of non-zero eigenvalues, but tensor rank is notoriously more mysterious than matrix rank.

We point out that in some applications one restricts the study of eigenvalues and eigenvectors to the real numbers. In this work, we focused on the complex setting, which is more natural from a projective geometry perspective. However, it would be interesting to study the distribution of real eigenvalues of real (symmetric) tensors. In particular, unlike the case of matrices, eigenvalues of symmetric tensors are not necessarily all real. This problem has been preliminarily studied in \cite{ASS}, but its geometry is far from understood. 

\subsection*{Acknowledgments} We thank Jarek Buczy\'nski and Maciej Ga{\l}{\k{a}}zka for helpful suggestions on the projective geometry of the characteristic polynomial map. This work is partially supported by the Thematic Research Programme ``Tensors: geometry, complexity and quantum entanglement'', University of Warsaw, Excellence Initiative -- Research University and the Simons Foundation Award No. 663281 granted to the Institute of Mathematics of the Polish Academy of Sciences for the years 2021--2023.

Galuppi is supported by the National Science Center, Poland, project ``Complex contact manifolds and geometry of secants'', 2017/26/E/ST1/00231.

Turatti has been supported by Troms{\o} Research Foundation grant agreement 17matteCR.

Venturello is partially supported by the PRIN project ``Algebraic and geometric aspects of Lie theory'' (2022S8SSW2).

\section{Preliminaries}\label{sec:preliminaries}
In this section we provide the formal definitions of eigenvalues and eigenvectors of tensors. In \autoref{corol: generic fiber dominant cases}, we determine the dimension of the fiber of the map $\Phi$ in the dominant cases classified in \cite[Theorem 1.1]{YH}. We conclude the section by reducing our study of symmetric tensors to the projective setting: we show that \autoref{conj:fibre symmetric} holds if and only if a projectivized version of the characteristic polynomial map is well-defined. This is the result that will ultimately yield the proof of \autoref{thm: main}.

\begin{definition}\label{def: eigenvalues}
Let $V$ and  $W$ be $\C$-vector spaces. Let $d \geq 2$ be an integer and let $\bft$ be a tensor in $V \otimes {W^*}^{\otimes (d-1)}$. For a tensor $T \in V \otimes {W^*}^{\otimes(d-1)}$, a \emph{$\bft$-eigenvector} of $T$ with \emph{$\bft$-eigenvalue} $\lambda$ is a vector $w \in W\setminus\{0\}$ with the property that 
\[
T  ( w^{\otimes (d-1)})  = \lambda \bft  ( w^{\otimes (d-1)}) 
\]
as elements of $V$. The pair $(\lambda, w)$ is a \emph{$\bft$-eigenpair} for $T$.
\end{definition}

Without loss of generality, one may restrict to partially symmetric tensors. More precisely, let $S^{d-1}W^*$ be the symmetric subspace of ${W^*}^{\otimes (d-1)}$, that is the space which is invariant under the action of the symmetric group $\frakS_{d-1}$ permuting the factors of ${W^*}^{\otimes (d-1)}$; let $\pi_S$ be the canonical projection of $V \otimes {W^*}^{\otimes (d-1)}$ onto $V \otimes S^{d-1}W^*$, that is 
\[\begin{tabular}{cccc}
$\pi_S :$ & $V \otimes {W^*}^{\otimes (d-1)}$ &$\to$ & $V \otimes S^{d-1}W^*$ \\
&$v \otimes w_1 \ootimes w_{d-1}$ &$\mapsto$ & $ v \otimes  \textfrac{1}{(d-1)!}\textstyle \sum_{\sigma \in \frakS_{d-1}}  w_{\sigma(1)} \ootimes w_{\sigma(d-1)}$.
\end{tabular}\]
Then we have the following immediate result:
\begin{lemma}\label{lemma: partially sym compt}
Let $T , \bft \in V \otimes {W^*}^{\otimes (d-1)}$. The $\bft$-eigenvectors and eigenvalues of $T$ only depend on $\pi_S(T)$ and $\pi_S(\bft)$. 
\end{lemma}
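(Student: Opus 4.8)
The plan is to reduce the whole statement to the single observation that, for every fixed $w \in W$, the vector $T(w^{\otimes(d-1)}) \in V$ in \autoref{def: eigenvalues} depends on $T$ only through $\pi_S(T)$. Once this is established, applying it to both $T$ and $\bft$ shows that the eigen-equation $T(w^{\otimes(d-1)}) = \lambda\,\bft(w^{\otimes(d-1)})$ is literally the same equation as $\pi_S(T)(w^{\otimes(d-1)}) = \lambda\,\pi_S(\bft)(w^{\otimes(d-1)})$ for every $w \in W \setminus \{0\}$ and every $\lambda$, and the conclusion is immediate.

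To prove the observation, I would first make explicit the contraction $S \mapsto S(u)$ used in the definition: it is the bilinear map $V \otimes {W^*}^{\otimes(d-1)} \times W^{\otimes(d-1)} \to V$ obtained by pairing the last $d-1$ factors of $S$ with $u$ via the duality $W^* \times W \to \bbC$. This map is equivariant for the natural actions of $\frakS_{d-1}$ permuting the last $d-1$ tensor factors of the source and the factors of $W^{\otimes(d-1)}$, i.e. $(\sigma \cdot S)(u) = S(\sigma^{-1}\cdot u)$ for all $\sigma \in \frakS_{d-1}$; this is checked on decomposable tensors and extended by linearity. Now $w^{\otimes(d-1)}$ is fixed by every $\sigma \in \frakS_{d-1}$, and $\pi_S = \tfrac{1}{(d-1)!}\sum_{\sigma \in \frakS_{d-1}} \sigma$ on the last $d-1$ factors, so
\[
\pi_S(S)\bigl(w^{\otimes(d-1)}\bigr) = \frac{1}{(d-1)!}\sum_{\sigma \in \frakS_{d-1}} S\bigl(\sigma^{-1}\cdot w^{\otimes(d-1)}\bigr) = S\bigl(w^{\otimes(d-1)}\bigr),
\]
where the last equality uses $\sigma^{-1}\cdot w^{\otimes(d-1)} = w^{\otimes(d-1)}$. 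Equivalently, one may phrase this via adjunction: $\pi_S$ is self-adjoint for the pairing ${W^*}^{\otimes(d-1)} \times W^{\otimes(d-1)} \to \bbC$ and fixes the symmetric tensor $w^{\otimes(d-1)}$, so pairing with $S$ or with $\pi_S(S)$ gives the same result.

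There is essentially no hard step: the only points requiring care are writing the $\frakS_{d-1}$-equivariance of the contraction with the correct inverse on $\sigma$, and noting that $\pi_S$ as defined in the statement is exactly the averaging projector onto the partially symmetric subspace, so that it indeed acts trivially against $w^{\otimes(d-1)}$ inside the contraction. This is why the lemma is stated as immediate.
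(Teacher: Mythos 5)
Your proof is correct and takes essentially the same approach as the paper's: the paper writes $T = \pi_S(T) + T'$ and notes $T'(w^{\otimes(d-1)}) = 0$, which is exactly your identity $\pi_S(S)(w^{\otimes(d-1)}) = S(w^{\otimes(d-1)})$ stated for the complementary part. You simply unfold the $\frakS_{d-1}$-equivariance and invariance of $w^{\otimes(d-1)}$ that the paper leaves implicit.
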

\begin{proof}
    Write $T = \pi_S(T) + T'$ and $\bft = \pi_S(\bft) + \bft'$ for certain tensors $T',\bft'$, uniquely determined by the projection. The result follows by linearity, since $T'(w^{\otimes (d-1)}) = \bft'(w^{\otimes (d-1)})  = 0$. 
\end{proof}
Following \autoref{lemma: partially sym compt}, we will assume $\bft \in V \otimes S^{d-1} W^*$ and restrict our study to tensors $T \in V \otimes S^{d-1} W^*$. It is a standard fact that elements of $S^{d-1} W^*$ can be regarded as homogeneous polynomials of degree $d-1$ on $W$, hence elements of $V \otimes S^{d-1} W^*$ can be identified with polynomial maps $W \to V$ whose components (in any given choice of coordinates) have degree $d-1$. If $T \in V \otimes S^{d-1} W^*$ and $w \in W$, write $T(w) = T ( w^{\otimes (d-1)}) \in V$ for the image of $T$ via the contraction with $w$ on all the factors $W^*$.

In practice, one is interested in $\bft$-eigenpairs for a particular choice of $\bft$, that is the \emph{unit tensor}. If $\dim V = \dim W = n+1$, let $v_0 \vvirg v_n$, $w_0 \vvirg w_n$ be bases of $V$ and $W$ respectively, and let $x_0 \vvirg x_n$ be the basis of $W^*$ dual to the chosen basis of $W$; explicitly $x_j$ is the linear map sending a vector $w=\mu_0w_0+\dots+\mu_nw_n \in W$ to the coefficient $\mu_j$. The \emph{unit tensor of rank $n+1$} is $$\bfu_{n+1} = \sum_{i=0}^n v_i \otimes x_i^{d-1}.$$
In the coordinate representation of $\bfu_{n+1}$ as a $d$-dimensional array of numbers, the entries of $\bfu_{n+1}$ are $1$ on the diagonal and $0$ elsewhere.  The eigenpairs defined in \cite[Section 2.1]{QZ} and discussed in \autoref{sec: intro} are exactly the $\bfu_{n+1}$-eigenpairs; the more general setting is considered for instance in \cite[Example 3.6]{MulNanSei:MultilinearHyperquiver}, in connection to the representation theory of hyperquivers.

\begin{remark}\label{rmk: for matrices, we basically choose t=u}
If $d=2$, then $\bfu_{n+1}$ is, in coordinates, the identity matrix of size $n+1$, hence the $\bfu_{n+1}$-eigenpairs of a tensor $T$ of order two are the classical eigenpairs of the matrix $T$. More generally, if $\bft$ is a tensor of order two whose representing matrix is non-singular, then the $\bft$-eigenpairs of a tensor $T$ are the classical eigenpairs of the matrix $\bft^{-1}T$. These eigenvalues are central in the study of matrix pencils  \cite[Chapter XII]{Gant:TheoryOfMatrices} and generalized eigenvalue problems \cite[Section 7.7]{GolVL:MatrixComputations}. Therefore \autoref{def: eigenvalues} can be viewed as a generalization to the tensor setting of the spectral theory of matrix pencils.
\end{remark}

\subsection{Characteristic polynomial of tensors}
In order to define the characteristic polynomial, we suppose that $$\dim V = \dim W = n+1$$ and we will keep this assumption for the rest of the paper. Let $T \in V \otimes S^{d-1}W^*$. The image of the flattening map $T : V^* \to S^{d-1} W^*$ defines a linear space of polynomials of degree $d-1$. For a generic choice of $T$, this linear space of polynomials does not have a common zero in $\bbP W$; the subset of tensors for which this linear space has a common zero in $\bbP W$ is an irreducible hypersurface of degree $D(n,d) = (n+1)(d-1)^n$ in $\bbP (V \otimes S^{d-1}W^*)$, called \emph{resultant} \cite[Section 13.1]{GKZ:DiscResMultDet}. Denote by $\calRes \subseteq \bbP (V \otimes S^{d-1}W^*)$ this hypersurface and by $\res \in \bbC[V \otimes S^{d-1}W^*]_{D(n,d)}$ its defining polynomial. We record the following result, which is an extension of \cite[Theorem 2.12(a)]{QZ}.
\begin{lemma}\label{lemma: characteristic polynomial}
 Let $T,\bft \in V \otimes S^{d-1}W^*$ and let $\lambda_0\in\C$. The following are equivalent:
 \begin{itemize}
     \item $\lambda_0$ is a $\bft$-eigenvalue of $T$;
     \item $\lambda_0$ is a root of the univariate polynomial $\res( T - \lambda \bft) \in \bbC[\lambda]$.
 \end{itemize}
In particular, if $T$ and $\bft$ are general then $T$ has $D(n,d)$ $\bft$-eigenpairs.
\end{lemma}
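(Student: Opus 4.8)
The plan is to reduce the statement to the defining property of the resultant: for a linear system of $n+1$ homogeneous polynomials of degree $d-1$ in $n+1$ variables, the resultant vanishes precisely when the system has a common nonzero root in $\bbP W$. First I would fix $\lambda_0 \in \bbC$ and consider the tensor $T - \lambda_0 \bft \in V \otimes S^{d-1}W^*$, viewed via its flattening as a linear map $(T - \lambda_0 \bft) \colon V^* \to S^{d-1}W^*$, equivalently as an $(n+1)$-tuple of degree $(d-1)$ forms on $W$ obtained by pairing with a basis $v_0^* \vvirg v_n^*$ of $V^*$. By the characterization of the resultant recalled just before the lemma, $\res(T - \lambda_0 \bft) = 0$ if and only if these $n+1$ forms have a common zero $[w] \in \bbP W$, i.e.\ if and only if there exists $w \in W \setminus \{0\}$ with $(T - \lambda_0 \bft)(w^{\otimes(d-1)}) = 0$ in $V$. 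Rearranging, this says $T(w^{\otimes(d-1)}) = \lambda_0 \bft(w^{\otimes(d-1)})$, which is exactly the condition that $(\lambda_0, w)$ is a $\bft$-eigenpair of $T$ in the sense of \autoref{def: eigenvalues}. This gives the equivalence of the two bullet points.

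For the ``in particular'' statement, I would argue as follows. Consider $\res(T - \lambda \bft)$ as a univariate polynomial in $\lambda$ with coefficients polynomial in the entries of $T$ and $\bft$; by the degree count $\deg_{\text{hom}} \res = D(n,d)$ and since each entry of $T - \lambda\bft$ is affine-linear in $\lambda$, this univariate polynomial has degree at most $D(n,d)$. To see it is not identically zero and has exactly $D(n,d)$ simple roots for general $(T,\bft)$, it suffices to exhibit one pair for which this holds; the natural choice is $\bft = \bfu_{n+1}$ and $T$ a generic diagonalizable-type tensor, for which the classical count of \cite{Qi} (cited in the introduction as \cite[Corollary 1]{Qi}, and also \cite[Theorem 15]{FO}, \cite[Theorem 2.2]{ASS}) gives exactly $D(n,d)$ distinct eigenvalues, hence $\res(T - \lambda\bfu_{n+1})$ has $D(n,d)$ distinct roots and in particular degree exactly $D(n,d)$. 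Since having $D(n,d)$ distinct roots is a Zariski-open condition on $(T,\bft)$ and is nonempty, it holds generically; each such eigenvalue $\lambda_0$ has a unique (up to scale) eigenvector because generically the common zero of the $n+1$ forms is a single reduced point, giving $D(n,d)$ $\bft$-eigenpairs.

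The main obstacle I anticipate is the bookkeeping around multiplicities and the passage from ``roots of the univariate polynomial'' to ``$\bft$-eigenpairs'': one must be careful that for general $T, \bft$ the hypersurface $\calRes$ is hit transversally by the affine line $\{T - \lambda\bft : \lambda \in \bbC\}$, so that no eigenvalue is repeated and no eigenvalue has a positive-dimensional or non-reduced eigenspace. This is where the genericity hypothesis does real work, and the cleanest route is the explicit witness $(\text{generic } T, \bfu_{n+1})$ together with the cited eigenvalue count, rather than a direct transversality computation with the resultant. The first bullet-point equivalence itself is essentially a restatement of the definition of the resultant and should be routine once the flattening $T \colon V^* \to S^{d-1}W^*$ is set up correctly.
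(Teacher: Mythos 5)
Your proof of the equivalence is essentially identical to the paper's: both unwind $\res(T-\lambda_0\bft)=0$ to the existence of a common zero of the flattening, which is by definition a $\bft$-eigenvector. The paper leaves the ``in particular'' as an immediate consequence (the leading coefficient of $\res(T-\lambda\bft)$ in $\lambda$ is $\pm\res(\bft)$, so the degree is $D(n,d)$ for generic $\bft$, and generically all roots are simple); your more explicit witness argument via $\bfu_{n+1}$ and the cited eigenvalue count is sound and fills in that detail.
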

\begin{proof}
The scalar $\lambda_0$ is a $\bft$-eigenvalue of $T$ if and only if there exists a vector $w \in W$ with the property that $T ( w ) = \lambda_0 \bft(w) \in V$. This is equivalent to the fact that the polynomial system corresponding to the tensor $T- \lambda_0 \bft$ has a non-trivial solution $w$, which by definition is equivalent to the condition $\res(T- \lambda_0 \bft) = 0$. 
\end{proof}

\begin{remark}
The number of $\bft$-eigenpairs of a generic tensor $T$ can be recovered as the degree of the $n$-th Chern class of the vector bundle $\calE = V \otimes \calO_{\bbP W} (d-1)$ on $\bbP W$. Indeed, the tensors $T$ and $\bft$ can be regarded as two global sections of $\calE$; since $\calE$ has rank $n+1$, by definition its $n$-th Chern class detects when $T$ and $\bft$ are linearly dependent as sections of $\calE$. If $h_W$ is the hyperplane class on $\bbP W$, then $c_1(\calO_{\bbP W}(d-1)) = (d-1) h_W$ and $c(\calE) = (1 + (d-1)h_W)^{n+1}$; the component of degree $n$ is indeed $c_n(\calE) = (n+1) (d-1)^{n} h_W^n$ which has degree $D(n,d)$.
\end{remark}

Define $\phi^\bft_T(\lambda)  = \res(T- \lambda \bft)$ to be the \emph{$\bft$-characteristic polynomial} of $T$. \autoref{lemma: characteristic polynomial} guarantees that the roots of $\phi^\bft_T(\lambda)$ are the $\bft$-eigenvalues of $T$. If $\res(\bft) \neq 0$, then $\phi^\bft_T(\lambda)$ has degree $D(n,d)$. Therefore, every element $\bft$ with $\res(\bft) \neq 0$ gives rise to the \emph{characteristic polynomial map}
\[
\begin{tabular}{ccccc}
$\Phi_\bft :$&$ V \otimes S^{d-1} W^*$ &$\to$&$ \bbC[\lambda]_{D(n,d)}^{\mon} \cong \bbA^{D(n,d)}$ \\
&$T$ &$\mapsto$&$\res(T -\lambda \bft)$.
\end{tabular}
\]

In \cite[Theorem 1.1]{YH}, the cases where the map $\Phi_{\bfu_{n+1}}$ is dominant are classified: this is the case for $n=1$, when $d=2$, and when $(n,d) = (2,3),(3,3),(2,4)$. In these cases, it is easy to prove (set-theoretic) equidimensionality of the generic fiber, using the following standard fact. 
\begin{lemma}\label{lemma: krull dimension}
    Let $f : \bbA^N \to \bbA^M$ be a dominant map of affine varieties. Then the generic fiber is (set-theoretically) equidimensional of dimension $N-M$.
\end{lemma}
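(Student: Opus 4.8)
The plan is to deduce this from the theorem on the dimension of the fibers of a dominant morphism. Since $f$ is dominant and $\bbA^M$ is irreducible, we may regard $f$ as a dominant morphism between the irreducible affine varieties $\bbA^N$ and $\bbA^M$, of dimensions $N$ and $M$ respectively; in particular $N \ge M$. The value $N-M$ will come from splitting the problem into a lower bound valid at \emph{every} point of the image, and an upper bound together with equidimensionality valid over a dense open subset of the target.

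For the lower bound I would use the elementary half of the fiber-dimension theorem: writing $f = (f_1 \vvirg f_M)$, for $y = (y_1 \vvirg y_M) \in f(\bbA^N)$ the fiber $f^{-1}(y)$ is the common zero locus in $\bbA^N$ of the $M$ functions $f_i - y_i$, and since passing to a hypersurface section drops the dimension of each irreducible component by at most one, every component $Z$ of $f^{-1}(y)$ satisfies $\dim Z \ge N-M$. For the upper bound I would pass to the function field $K = \bbC(\bbA^M)$ of the base and examine the scheme-theoretic generic fiber
\[
X_\eta = \Spec\bigl( \bbC[x_1 \vvirg x_N] \otimes_{\bbC[y_1 \vvirg y_M]} K \bigr).
\]
Dominance of $f$ means that the comorphism $\bbC[y_1 \vvirg y_M] \to \bbC[x_1 \vvirg x_N]$ is injective, so $X_\eta$ is the localization of the domain $\bbC[x_1 \vvirg x_N]$ at the multiplicative set $\bbC[y_1 \vvirg y_M] \setminus \{0\}$; hence $X_\eta$ is integral with fraction field $\bbC(\bbA^N)$, and is therefore irreducible of dimension $\mathrm{trdeg}_K \bbC(\bbA^N) = N - M$.

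It then remains to transfer these facts to the closed fibers. By a standard spreading-out argument --- equivalently, by restricting to the locus where $f$ is flat, which is open and dense by generic flatness, and invoking the dimension formula for flat morphisms locally of finite type --- there is a dense open $U \subseteq \bbA^M$ such that for every closed point $y \in U$ the fiber $f^{-1}(y)$ is nonempty and set-theoretically equidimensional of dimension $\dim X_\eta = N - M$. This last step is the main technical point: passing from the generic fiber over $K$ to the closed fibers over a dense open, and in particular obtaining equidimensionality rather than just the dimension estimate from the first two steps. As the statement is the classical form of the fiber-dimension theorem, available in standard references, one may alternatively simply cite it here.
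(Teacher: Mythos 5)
Your proposal is correct, and the crucial observation is the same one the paper makes: the lower bound $\dim Z \ge N-M$ for every component $Z$ of a nonempty fiber, obtained from Krull's Hauptidealsatz applied to the $M$ equations $f_i = y_i$. This is exactly what upgrades the classical fiber-dimension theorem, which only controls the maximal component dimension, into an equidimensionality statement. Where you diverge is in the complementary upper bound. The paper simply cites the classical fact that a dominant morphism of irreducible varieties has generic fiber of dimension $N-M$ and is done; you instead compute the scheme-theoretic generic fiber over the function field $K$ and spread out using generic flatness together with the dimension formula for flat morphisms. This is correct and is the ``modern'' derivation, but it is heavier machinery than the lemma needs; moreover, the flat dimension formula already gives equidimensionality of the closed fibers over the flat locus on its own (apply it at a generic closed point of each component), so in the architecture you set up the Krull step becomes a harmless redundancy rather than the load-bearing observation it is in the paper. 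Both routes are valid; the paper's combination of the elementary Hauptidealsatz bound with a bare citation of the classical theorem is more economical, while yours makes explicit where the open dense locus of good behaviour comes from.
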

\begin{proof}
    Let $x \in \bbA^N$ be generic and let $F_x = f^{-1}(f(x))$ be the fiber over $f(x)$. Since $f$ is dominant, $\dim F_x = N-M$ and it possibly has components of lower dimension. On the other hand, the fiber $F_x$ is set-theoretically defined as $F_x = \{ x' \in \bbA^N : f(x') = f(x)\}$. In coordinates, the condition $f(x) = f(x')$ consists of $M$ equations on $\bbA^N$. An easy consequence of Krull's Hauptidealsatz \cite[Corollary 11.16]{AtMac:CommutativeAlgebra} is that every component of $F_x$ has codimension at most $M$, as desired.
\end{proof}

\autoref{lemma: krull dimension} guarantees that the fiber of $\Phi_{\bfu_{n+1}}$ is equidimensional in the cases where the map is dominant. An easy semicontinuity argument guarantees that analogous results hold for every generic enough $\bft \in V \otimes S^{d-1}W^*$. We record this explicitly in the next result.
\begin{corollary}\label{corol: generic fiber dominant cases}
Let $n=1$, or $d = 2$, or $(n,d)\in\{ (2,3),(3,3),(2,4)\}$. Let $\bft \in V \otimes S^{d-1} W^*$ be generic or $\bft = \bfu_{n+1}$. Then $\Phi_\bft$ is dominant and its generic fiber is equidimensional of dimension $(n+1) \binom{n+d}{d} - D(n,d)$. 
\end{corollary}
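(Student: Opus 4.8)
The plan is to reduce everything to the distinguished point $\bft = \bfu_{n+1}$, where dominance is already available, and then propagate the statement to a generic $\bft$ by a semicontinuity argument on the rank of the differential of $\Phi_\bft$. For $\bft = \bfu_{n+1}$, \cite[Theorem 1.1]{YH} says precisely that $\Phi_{\bfu_{n+1}}\colon V \otimes S^{d-1}W^* \to \bbA^{D(n,d)}$ is dominant in each of the listed cases; then \autoref{lemma: krull dimension}, applied with $N = \dim(V \otimes S^{d-1}W^*)$ and $M = D(n,d)$, yields at once that the generic fiber is equidimensional of dimension $N - M$, which is the value in the statement. So it only remains to upgrade ``$\Phi_{\bfu_{n+1}}$ is dominant'' to ``$\Phi_\bft$ is dominant for a generic $\bft$''.

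For this I would expand $\res(T - \lambda \bft) = \sum_{k=0}^{D(n,d)} c_k(T,\bft)\,\lambda^k$. Since $\res$ is homogeneous of degree $D(n,d)$ and $T - \lambda\bft$ is jointly linear in $(T,\bft)$, each $c_k$ is a polynomial in the entries of $T$ and $\bft$, bihomogeneous of bidegree $(D(n,d)-k,\,k)$; in particular the leading coefficient $c_{D(n,d)}(T,\bft) = (-1)^{D(n,d)}\res(\bft)$ does not involve $T$. Hence on the open set $U = \{\bft : \res(\bft) \neq 0\}$ the coordinates of the monic polynomial $\Phi_\bft(T)$ are the regular functions $c_k(T,\bft)/c_{D(n,d)}(\bft)$ with $0 \le k \le D(n,d)-1$, and --- crucially, because the denominator is $T$-independent --- the Jacobian $\partial_T \Phi_\bft$ equals $c_{D(n,d)}(\bft)^{-1}$ times the polynomial matrix $J(T,\bft) = \bigl(\partial_T c_k(T,\bft)\bigr)_{0 \le k < D(n,d)}$, so the two have the same rank. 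Consequently the locus $\Omega = \{(T,\bft) \in (V\otimes S^{d-1}W^*) \times U : \rank J(T,\bft) = D(n,d)\}$ is Zariski-open.

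By the case $\bft = \bfu_{n+1}$ just settled, $\Phi_{\bfu_{n+1}}$ is dominant, so its differential is surjective at some $T_0$, i.e.\ $(T_0,\bfu_{n+1}) \in \Omega$; thus $\Omega$ is nonempty, and since its ambient space is irreducible it is dense. Its image $S = \mathrm{pr}_2(\Omega) \subseteq U$ under the second projection is therefore a dense constructible subset containing $\bfu_{n+1}$, and for every $\bft \in S$ the morphism $\Phi_\bft\colon \bbA^N \to \bbA^{D(n,d)}$ admits a point with surjective differential, hence is dominant. A final application of \autoref{lemma: krull dimension} then gives equidimensionality and the dimension $N - D(n,d)$ of the generic fiber. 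Because a dense constructible set contains a dense open subset and $\bfu_{n+1} \in S$, this covers both cases ``$\bft$ generic'' and ``$\bft = \bfu_{n+1}$'' in the statement.

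I do not expect a genuine obstacle here --- this is the ``easy semicontinuity argument'' alluded to before the statement --- but two points deserve care. The technical one is the observation that the coordinate functions of $\Phi_\bft$ share a single $T$-independent denominator; this is exactly what makes $\Omega$ an honest open subset (rather than merely locally closed) and is the heart of the propagation step. The conceptual one is to explain why one cannot simply move the statement from $\bfu_{n+1}$ to a general $\bft$ by a linear change of coordinates: for $d \ge 3$ a generic $\bft$ with $\res(\bft) \neq 0$ is \emph{not} in the $\GL(V) \times \GL(W)$-orbit of $\bfu_{n+1}$, since that orbit is the locus of ``diagonalizable'' partially symmetric tensors, a proper subvariety of $\{\res \neq 0\}$; so the deformation step is unavoidable.
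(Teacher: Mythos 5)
Your proof is correct and follows essentially the same route as the paper's: dominance for $\bft = \bfu_{n+1}$ from \cite[Theorem 1.1]{YH}, a semicontinuity argument to propagate dominance to a generic $\bft$, and \autoref{lemma: krull dimension} for equidimensionality. The only difference is that you spell out the semicontinuity step (the $T$-independent leading coefficient, the openness of the full-rank Jacobian locus, and its nonemptiness at $(T_0,\bfu_{n+1})$), whereas the paper compresses this to ``a standard semicontinuity argument.''
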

\begin{proof}
The fact that $\Phi_\bft$ is dominant when $\bft = \bfu_{n+1}$ follows from \cite[Theorem 1.1]{YH}. For general $\bft$, the same result is a consequence of a standard semicontinuity argument. The dimension of the generic fiber is the difference between the dimension of the domain and the one of the image. Equidimensionality follows from \autoref{lemma: krull dimension}. 
\end{proof}

In order to prove \autoref{thm: main}, we will study a projectivized version of the map $\Phi$. Indeed, in the monomial basis of the space of monic polynomials, the $i$-th component of $\Phi_\bft$ is a homogeneous polynomial of degree $i$ in the coordinates of $T$. Therefore $\Phi_\bft$ defines a rational map to a \emph{weighted projective space} $\bbP(\bfrho_{D(n,d)})$ with weights $\bfrho_{D(n,d)} = (1 \vvirg D(n,d))$:
\begin{center}
    \begin{tabular}{cccc}
$\Phi_\bft :$& $\bbP ( V \otimes S^{d-1} W^* )$ &$\dashto$ & $\bbP(\bfrho_{D(n,d)})$ \\
 &$[T]$ &$\mapsto$ & $(c_1(T) \vvirg c_{D(n,d)}(T)) $,
\end{tabular}
\end{center}
where $\phi^\bft_T(\lambda) = \lambda^{D(n,d)} + c_1(T) \lambda^{D(n,d)-1} + \cdots + c_{D(n,d)}(T)$. We will use the same notation for the affine and the projective version of the characteristic polynomial map and refer to them as affine or projective when confusion might arise. Consider the restriction of the characteristic polynomial map to the subspace of symmetric tensors as well. Consider an identification $V \cong W^*$ and let $S^d W^* \subseteq W^* \otimes S^{d-1} W^*$ be the subspace of symmetric tensors, naturally identified with the space of homogeneous polynomials of degree $d$ on $W$. The resultant polynomial $\res$ restricts to the \emph{discriminant polynomial} $\disc \in \bbC[S^d W^*]_{D(n,d)}$, which defines the hypersurface $\calDisc$ consisting of polynomials of degree $d$ whose zero set is singular. Given $\bft \in S^d W^*$ with $\disc(\bft) \neq 0$, let 
\begin{center}
    \begin{tabular}{cccc}
$\Phi_{\bft,\sym}  :$&$ S^{d} W^* $&$\to $&$\bbC[\lambda]_{D(n,d)}^{\mon} \cong \bbA^{D(n,d)} $\\
&$T$ &$\mapsto$&$ \disc(T -\lambda \bft)$
\end{tabular}
\end{center}
be the restriction of the characteristic polynomial map to the symmetric subspace. Let $\Phi_{\bft,\sym} : \bbP S^d W^* \dashto \bbP ( \bfrho_{D(n,d)})$ be the rational map induced on projective space.

We conclude this section by proving \autoref{prop: regular implies finite}, stating a general geometric fact which will be the key to prove \autoref{thm: main}. 

\begin{proposition}\label{prop: regular implies finite}
Let $f : \bbP^N \to Y$ be a surjective morphism of projective varieties. Then either $f$ is constant, or $\dim Y = N$ and all fibers of $f$ are finite.
\end{proposition}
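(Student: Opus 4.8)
The plan is to argue that if $f : \bbP^N \to Y$ has a positive-dimensional fiber, then $f$ must be constant; the dichotomy and the dimension statement follow. First I would invoke the fact that the image of a projective variety under a morphism is closed and irreducible, so $Y$ is a projective variety of some dimension $m \le N$; since $f$ is surjective this $Y$ is exactly the image. If $m = N$, then the generic fiber is finite by the theorem on the dimension of fibers (a dominant morphism of irreducible varieties of equal dimension is generically finite), and I would upgrade ``generic'' to ``all'' using the following rigidity: every fiber of $f$ is a closed subvariety of $\bbP^N$, and if some fiber $F = f^{-1}(y)$ had $\dim F \ge 1$, then $F$ would be a positive-dimensional closed subvariety of $\bbP^N$, hence would meet every hypersurface; I'll exploit this in the next step to contradict $m = N$, so in fact all fibers are finite when $m=N$.

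The heart of the argument is therefore: if $\dim f^{-1}(y) \ge 1$ for some $y \in Y$, then $f$ is constant (so $m = 0$). The key step is that a positive-dimensional closed subvariety $F \subseteq \bbP^N$ is \emph{not} contained in any affine open subset of $\bbP^N$, equivalently $F$ meets every hyperplane $H \subseteq \bbP^N$. Now suppose $f$ is not constant, so $\dim Y \ge 1$; pick a point $y' \in Y$ with $y' \ne y$ and, using that $Y$ is projective hence has an ample line bundle, choose an effective divisor (a hyperplane section in a projective embedding of $Y$) $D \subseteq Y$ with $y' \in D$ but $y \notin D$. Then $f^{-1}(D) \subseteq \bbP^N$ is a divisor, i.e.\ cut out by a single hypersurface $\{g = 0\}$ after pulling back the hyperplane class, and it is disjoint from $F = f^{-1}(y)$. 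But $F$ is a positive-dimensional closed subvariety of $\bbP^N$, and every hypersurface in $\bbP^N$ meets every positive-dimensional closed subvariety (this is the classical projective-dimension-theorem statement: in $\bbP^N$, $\codim(X\cap Z)\le \codim X + \codim Z$, so $X \cap Z \ne \emptyset$ whenever $\dim X + \dim Z \ge N$). This contradiction forces $f$ to be constant.

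Assembling the pieces: if $f$ is non-constant then $\dim Y \ge 1$, and the argument above shows every fiber is $0$-dimensional, hence finite (a projective variety of dimension $0$ is a finite set of points); then the fiber dimension theorem gives $\dim \bbP^N = \dim Y + \dim(\text{generic fiber}) = \dim Y$, so $\dim Y = N$. In the remaining case $f$ is constant, which is the first alternative. I expect the main obstacle to be stating the ``hyperplane section avoiding one point, containing another'' step cleanly: one must make sure $Y$ is genuinely projective (which holds as the image of $\bbP^N$), fix a projective embedding, and note that a general hyperplane through $y'$ can be chosen to avoid the point $y$ — this is where projectivity of the \emph{target} is essential and is exactly the hypothesis that fails for merely quasi-projective $Y$. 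Everything else is a direct application of the dimension theorem for fibers and the intersection-dimension inequality in $\bbP^N$.
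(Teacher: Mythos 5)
Your proof is correct and takes essentially the same approach as the paper: both reduce to showing a positive-dimensional fiber $F = f^{-1}(y)$ would have to meet a hypersurface $Z \subseteq f^{-1}(D)$ for a divisor $D \subseteq Y$ with $y \notin D$, which is impossible since $f^{-1}(y) \cap f^{-1}(D) = \emptyset$. The only minor difference is how the hypersurface in $\bbP^N$ is produced — you pull back a section of $\calO_Y(1)$ and use $\Pic(\bbP^N) = \bbZ$, while the paper obtains a codimension-one component of $f^{-1}(D)$ via a fiber-dimension count over a generic point of $D$ — but these are interchangeable justifications of the same step, not a different strategy.
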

\begin{proof}
Suppose $f$ is not constant and assume by contradiction that there exists $y \in Y$ with a positive dimensional fiber. Let $F$ be a positive dimensional component of $f^{-1}(y)$. Since $f$ is not constant, $Y$ has dimension at least $1$, and we can consider an irreducible divisor $D\subset Y$ such that $y \notin D$ and with the property that the generic element $z \in D$ satisfies $\dim f^{-1}(z) = N - \dim Y$. Therefore $f^{-1}(D)$ has at least one irreducible component $Z$ which is a hypersurface in $\bbP^N$. Since $F$ has positive dimension, we deduce $F \cap Z \neq \emptyset$. On the other hand 
    \[
F \cap Z \subseteq f^{-1} ( y) \cap f^{-1} (D) 
   = f^{-1} ( \{ y \} \cap D) = \emptyset
   \]
and this is a contradiction. 
\end{proof}

\begin{remark}\label{rmk: it is enough to prove that phi is defined everywhere}
By \autoref{prop: regular implies finite}, every non-constant projective morphism whose domain is a projective space has indeed finite fibers. As a consequence, we deduce that if the characteristic polynomial map $\Phi_{\bft,\sym} : \bbP (S^{d} W^*) \dashto \bbP(\bfrho_{D(n,d)})$ is defined everywhere, then all its fibers are finite. We will prove that this is the case when $n =1 $ and when $(n,d) = (2,3)$. \autoref{thm: main} is the special case $\bft = \bfu_{n+1}$.

The indeterminacy locus of the maps $\Phi$ and $\Phi_\sym$ consists of tensors $T$ such that $\phi_T^\bft(\lambda) = \lambda^{D(n,d)}$, namely tensors whose only eigenvalue is $0$. We will prove these tensors do not exist under the assumptions of \autoref{thm: main}. More precisely, we will see that the non-emptiness of the indeterminacy locus is equivalent to the existence of lines in $\bbP S^d W^*$ intersecting the discriminant hypersurface $\calDisc$ at a single point and the proof will be built on certain intersection theoretic properties of such hypersurface.
\end{remark}

\section{Symmetries preserving eigenvalues and eigenvectors}\label{sec:symmetries}
In this section we discuss the action of $\GL(V)\times\GL(W)$ on the space of partially symmetric tensors $V\ot S^{d-1}W^*$. This will give us a lower bound on the dimension of the fibers of $\Phi$, providing evidence for \autoref{conj: part sym fibers} and answering a question raised in \cite[Section 6.2]{YH}.

Let $(g_V,g_W)\in\GL(V)\times\GL(W)$ and let $T\in V\ot S^{d-1}W^*$. Regard $V\ot S^{d-1}W^*$ as the space of polynomial maps $W\to V$, and define $(g_V,g_W)\cdot T$ to be the polynomial map $W\to V$ given by
\begin{align*}
w \mapsto g_V T(g_W^{-1}(w))).
\end{align*}
Notice that this action is not faithful. Indeed, the subgroup $C = \{ (\alpha^{d-1}\id_V , \alpha \id_W) : \alpha \in \bbC^{\times}\}$ of $\GL(V) \times \GL(W)$ acts as the identity on $V \otimes S^{d-1} W^*$.

\begin{definition}
Let $\bft\in V\ot S^{d-1}W^*$. The stabilizer
\[
G_\bft=\{(g_V,g_W)\in\GL(V)\times\GL(W)\mid (g_V,g_W)\cdot\bft=\bft\}
\]
is called the \emph{isotropy group} of $\bft$.
\end{definition}

The eigenvalues and the characteristic polynomial of a tensor are invariant under the action of the isotropy group of the tensor $\bft$, as observed in the following result.

\begin{lemma}\label{lemma: same eigenvalues via stabilizer}
Let $\bft,T \in V \otimes S^{d-1}W^*$ and let $g = (g_V,g_W) \in G_\bft$. Then $(\lambda,w)$ is a $\bft$-eigenpair for $T$ if and only if $(\lambda, g_Ww)$ is a $\bft$-eigenpair for $g \cdot T$. In particular, $T$ and $g \cdot  T$ have the same $\bft$-characteristic polynomial.
\end{lemma}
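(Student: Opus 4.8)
The plan is to verify the biconditional by a direct substitution into the defining relation of an eigenpair, and then to deduce the statement about the characteristic polynomial from the linearity of the $\GL(V)\times\GL(W)$-action together with the relative invariance of the resultant.

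First I would record two elementary identities. Recall that $(g\cdot T)(w) = g_V\,T(g_W^{-1}w)$, where $T(w)$ abbreviates the contraction $T(w^{\otimes(d-1)})\in V$, and that $(\lambda,w)$ is a $\bft$-eigenpair for $T$ exactly when $w\neq 0$ and $T(w)=\lambda\,\bft(w)$ in $V$. For every $w\in W$ one has $(g\cdot T)(g_Ww)=g_V\,T(w)$, and, since $g\in G_\bft$, also $\bft(g_Ww)=(g\cdot\bft)(g_Ww)=g_V\,\bft(w)$. Because $g_V$ is an isomorphism of $V$, applying it to $T(w)=\lambda\,\bft(w)$ (and, conversely, applying $g_V^{-1}$) shows that $T(w)=\lambda\,\bft(w)$ holds if and only if $(g\cdot T)(g_Ww)=\lambda\,\bft(g_Ww)$; moreover $w\neq 0$ if and only if $g_Ww\neq 0$ since $g_W$ is invertible. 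As $w\mapsto g_Ww$ is a bijection of $W$, this is exactly the claimed correspondence between $\bft$-eigenpairs of $T$ and of $g\cdot T$.

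For the final assertion, note that the map $S\mapsto g\cdot S$ is $\bbC$-linear and fixes $\bft$, so $(g\cdot T)-\lambda\bft = g\cdot(T-\lambda\bft)$ for every $\lambda$. The resultant is a relative invariant for the $\GL(V)\times\GL(W)$-action, i.e.\ $\res(g\cdot S)=\chi(g)\,\res(S)$ for all $S\in V\otimes S^{d-1}W^*$, with $\chi(g)\in\bbC^\times$ depending only on $g$; since $g\cdot\bft=\bft$ we get $\chi(g)\,\res(\bft)=\res(\bft)$, which forces $\chi(g)=1$ (here $\res(\bft)\neq 0$, as is needed for $\phi^\bft$ to be a polynomial of the expected degree). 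Hence
\[
\phi^\bft_{g\cdot T}(\lambda)=\res\bigl((g\cdot T)-\lambda\bft\bigr)=\res\bigl(g\cdot(T-\lambda\bft)\bigr)=\res(T-\lambda\bft)=\phi^\bft_T(\lambda).
\]

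There is no genuinely hard step: the computation with the two identities is routine, and the only point that deserves a word of care is the use of the relative invariance of the resultant and the observation that its multiplier $\chi$ restricts trivially to $G_\bft$ — which is immediate once one tests it on $\bft$ itself. If one is content with the equality of the \emph{sets} of eigenvalues rather than of the characteristic polynomials, the first two identities already suffice and the resultant plays no role.
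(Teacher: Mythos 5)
Your argument for the eigenpair correspondence is essentially the paper's: substitute $g_Ww$ into the defining relation, use $g\in G_\bft$ to replace $g\cdot\bft$ by $\bft$, and use invertibility of $g_V,g_W$ to pass back and forth. Where you go further is the ``in particular'' claim. The paper treats the equality of characteristic polynomials as an immediate consequence and gives no separate argument, but strictly speaking the eigenpair correspondence only shows that $T$ and $g\cdot T$ have the same \emph{set} of eigenvalues, not that $\res(T-\lambda\bft)$ and $\res(g\cdot T-\lambda\bft)$ agree as polynomials (multiplicities and the scalar normalization included). Your route — linearity of the action gives $(g\cdot T)-\lambda\bft=g\cdot(T-\lambda\bft)$, then relative invariance of $\res$ under $\GL(V)\times\GL(W)$ together with testing the multiplier $\chi$ on $\bft$ itself to force $\chi(g)=1$ — is the clean way to make this rigorous, and it directly produces the identity of polynomials. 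One small point you handle correctly but the lemma statement glosses over: your argument needs $\res(\bft)\neq 0$ to conclude $\chi(g)=1$; this is not stated in the hypotheses of the lemma, but it is exactly the standing assumption under which $\phi^\bft$ is defined and used throughout the paper, so there is no real issue.
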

\begin{proof}
Let $(\lambda, w)$ be a $\bft$-eigenpair for $T$. Then
\begin{align*}
(g \cdot T) ( (g_W w)^{\otimes (d-1)}) = &g_VT (g_W^{-1} (g_W w)^{\otimes (d-1)}) = g_V T(w^{\otimes (d-1)}) = \\ & \lambda g_V \bft(w^{\otimes (d-1)}) =  \lambda g_V \bft(g_W ^{-1} g_W w^{\otimes (d-1)}) = \\ 
&\lambda (g\cdot \bft)((g_W w)^{\otimes (d-1)}) = \lambda \bft ( (g_W w)^{\otimes (d-1)} ).
    \end{align*}
This shows that $(\lambda , g_W w)$ is a $\bft$-eigenpair of $g \cdot T$. Since $g \in G_{\bft}$ implies $g^{-1} \in G_\bft$ as well, the other implication follows.
\end{proof}

When $\dim V=\dim W=n+1$ and $\bft$ is the unit tensor $\bfu_{n+1}$, we can describe explicitly $G_{\bfu_{n+1}}$. The literature contains several proofs of this result either for tensors of order three, such as  \cite[Section 3]{dGro:VarsOptAlgIIsoGrps} and \cite[Section 4.1]{BuIk:GCT_and_tensor_rank} or in the setting of symmetric tensors \cite[Section 8.12.1]{Lan:GeometryComplThBook}. These proofs generalize to the partially symmetric setting; we provide an outline for completeness. It is also possible to give an explicit proof, following the methods from \cite[Section 5]{GesIkPa:GCTMatrixPowering}.
 
When $d=2$, the unit tensor is the $(n+1)\times (n+1)$ identity matrix, so $G_{\bfu_{n+1}}$ is a copy of $\GL_{n+1}$ embedded diagonally in $\GL(V) \times \GL(W)$.

For $d\ge 3$, let $\T_{n+1}=(\C^\times)^{n+1}$ be the algebraic torus of rank $n+1$. Consider the embedding of $\T_{n+1}$ in $\GL(W)$ with weights $(1,\dots,1)$ and its embedding in $\GL(V)$ with weights $(d-1,\dots,d-1)$ with respect to the bases chosen to define the unit tensor. Explicitly, if $(a_0,\dots,a_n)\in\T_{n+1}$, $v_i$ is a basis element of $V$ and $w_j$ is a basis element of  $W$, then
\[
a\cdot v_i = a_i^{d-1} v_i , \qquad a\cdot w_j = a_j \cdot w_j.
\]
Under this embedding, $\T_{n+1}$ acts on $V \otimes S^{d-1}W^*$. Explicitly, if $T = v_i \otimes x_{j_1} \cdots x_{j_{d-1}}$ is a basis element of $V \otimes S^{d-1} W^*$, then 
\[
a \cdot T = (a_i^{d-1} v_i) \otimes  (a_{j_1}^{-1} x_{j_1}) \cdots (a_{j_{d-1}}^{-1} x_{j_{d-1}}) = a_i^{d-1} (a_{j_1} \cdots a_{j_{d-1}})^{-1}T.
\]
In \cite[Section 3]{YangYang}, it was shown that this action preserves eigenvalues of tensors in $V \otimes S^{d-1}W^*$. 

Let $\frakS_{n+1}$ be the symmetric group on $n+1$ elements, which naturally acts on $V$ and $W$ by permuting the basis elements: 
\[
\sigma \cdot v_i = v_{\sigma(i)}, \qquad \sigma \cdot w_j = w_{\sigma(j)} .
\]
In \cite[Theorem 3]{permutations}, it was shown that this action preserves eigenvalues of tensors in $V \otimes S^{d-1}W^*$ as well. Moreover, the embedded copy of $\frakS_{n+1}$ normalizes the embedded copy of $\bbT_{n+1}$ in $\GL(V) \times \GL(W)$. Let $H = \bbT_{n+1} \rtimes \frakS_{n+1}$ be the resulting semidirect product. 
\begin{lemma}\label{lemma: u is stabilized only by YY and permutations}
If $d \geq 3$, then $H$ is the isotropy group of the unit tensor. In other words
\[
G_{\bfu_{n+1}} = \bbT_{n+1} \rtimes \frakS_{n+1},
\]
with the action described above.
  \end{lemma}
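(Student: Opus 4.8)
The plan is to prove the nontrivial inclusion $G_{\bfu_{n+1}} \subseteq \bbT_{n+1} \rtimes \frakS_{n+1}$; the reverse inclusion is immediate from the formulas recorded above, since a torus element $a$ acts on the summand $v_i \otimes x_i^{d-1}$ of $\bfu_{n+1}$ by the scalar $a_i^{d-1}(a_i^{d-1})^{-1} = 1$, and $\frakS_{n+1}$ merely permutes these summands among themselves.

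So let $(g_V, g_W) \in G_{\bfu_{n+1}}$. Viewing $\bfu_{n+1}$ as the polynomial map $W \to V$ sending $w = \sum_j \mu_j w_j$ to $\sum_i \mu_i^{d-1} v_i$, the condition $(g_V, g_W)\cdot\bfu_{n+1} = \bfu_{n+1}$ is equivalent to $g_V \circ \bfu_{n+1} = \bfu_{n+1}\circ g_W$ as polynomial maps $W \to V$. Writing $g_W$ and $g_V$ via the matrices $(b_{ij})$ and $(a_{ij})$ in the chosen bases and comparing the coefficient of $v_i$ on both sides, we obtain for each $i$ the polynomial identity in $\mu_0 \vvirg \mu_n$
\[
\Big(\sum_{j=0}^n b_{ij}\mu_j\Big)^{d-1} = \sum_{j=0}^n a_{ij}\,\mu_j^{d-1}.
\]

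The key point is that, since $d \geq 3$, a $(d-1)$-th power of a linear form which equals a $\bbC$-linear combination of the pure powers $\mu_0^{d-1} \vvirg \mu_n^{d-1}$ must be a scalar multiple of a single $\mu_k^{d-1}$. Indeed, expanding the left-hand side, the coefficient of the monomial $\mu_p^{d-2}\mu_q$ with $p\neq q$ equals $(d-1)\,b_{ip}^{d-2}b_{iq}$; since the right-hand side has no such mixed monomial and $d-1 \neq 0$ in characteristic zero, we get $b_{ip}b_{iq} = 0$ for all $p \neq q$, so each row of $(b_{ij})$ has at most one nonzero entry. As $g_W$ is invertible, each row and column has exactly one nonzero entry, hence $g_W$ is a monomial matrix: there is $\sigma \in \frakS_{n+1}$ with $\sum_j b_{ij}\mu_j = b_{i\sigma(i)}\mu_{\sigma(i)}$ and $b_{i\sigma(i)}\neq 0$ for all $i$. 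Substituting back into the identity yields $b_{i\sigma(i)}^{d-1}\mu_{\sigma(i)}^{d-1} = \sum_j a_{ij}\mu_j^{d-1}$, forcing $a_{ij} = 0$ for $j\neq\sigma(i)$ and $a_{i\sigma(i)} = b_{i\sigma(i)}^{d-1}$. Thus $g_V$ is a monomial matrix realizing the same permutation $\sigma$, with nonzero entries equal to the $(d-1)$-th powers of those of $g_W$; comparing with the explicit embedding of $\bbT_{n+1}\rtimes\frakS_{n+1}$ described above — the torus supplying the diagonal scalars on $W$ and their $(d-1)$-th powers on $V$, and $\frakS_{n+1}$ the permutation — this is exactly the statement that $(g_V, g_W) \in \bbT_{n+1}\rtimes\frakS_{n+1}$, as desired.

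I do not expect a genuine obstacle: the argument is elementary. The only point requiring care is the bookkeeping needed to match the ``monomial matrix'' conclusion with the precise semidirect-product embedding (weights $(1 \vvirg 1)$ on $W$ versus $(d-1 \vvirg d-1)$ on $V$), and the observation that the hypothesis $d\geq 3$ (equivalently $d-1\geq 2$) is exactly what makes the mixed-monomial vanishing argument work — for $d=2$ there are no mixed monomials and the statement genuinely fails, the isotropy group then being $\GL_{n+1}$ embedded diagonally.
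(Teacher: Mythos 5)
Your proof is correct, but it takes a genuinely different (and more self-contained) route than the paper. The paper identifies $\bfu_{n+1}$ as the structure tensor of the $(d-1)$-fold componentwise multiplication on the algebra $\bbC^{n+1}$ and then appeals to the higher-order analogue of results of de~Groote and B\"urgisser--Ikenmeyer on isotropy groups of algebra tensors; the lemma follows in one line from those citations. You instead carry out the ``explicit proof'' that the paper only alludes to: writing the stabilizer condition as the polynomial identity $(\sum_j b_{ij}\mu_j)^{d-1}=\sum_j a_{ij}\mu_j^{d-1}$, reading off the vanishing of mixed monomials $\mu_p^{d-2}\mu_q$ (which requires $d\geq 3$), deducing that $g_W$ is a monomial matrix, and then that $g_V$ is the corresponding monomial matrix with entries raised to the $(d-1)$-th power — exactly the torus-times-permutation description of $H$. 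Your argument is elementary and avoids importing the algebra-isotropy machinery, at the cost of a bit of bookkeeping; the paper's proof is shorter modulo the cited results and places the lemma in a broader structural context. Both are sound, and you correctly flag the role of $d\geq 3$ and the contrast with the $d=2$ case.
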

\begin{proof}
It is clear that $H$ stabilizes $\bfu_{n+1}$, therefore $H \subseteq G_{\bfu_{n+1}}$. The fixed bases define an identification $V \cong W$. Moreover, via the fixed bases, $W$ can be regarded as an $(n+1)$-dimensional algebra with component-wise product $w_i \cdot w_j = \delta_{ij} w_j$.

It is easy to verify that $\bfu_{n+1}$ is the structure tensor of the $(d-1)$-fold product of this algebra structure: as a $(d-1)$-linear map, it maps $d-1$ elements of $W$ to their product. Then the higher order analog of \cite[Theorem 3.1]{dGro:VarsOptAlgIIsoGrps} and \cite[Proposition 4.1]{BuIk:GCT_and_tensor_rank} guarantee that $H$ is the desired isotropy group. 
\end{proof}

The dimension of the isotropy group provides a lower bound on the dimension of the generic fibers of $\Phi_{\bft}$. This answers the question posed in \cite[Section 6.2]{YH} regarding the dimension of the generic fiber of $\Phi_{\bfu_{n+1}}$: such fiber is never finite.

\begin{proposition}\label{lemma: generic orbit in generic fiber}
Let $d \geq 3$ and let $\bft \in V \otimes S^{d-1} W^*$ be a tensor with $\res(\bft) \neq 0$. Then the generic fiber of $\Phi_\bft$ has dimension at least $\dim G_{\bft} -1$. In particular, the generic fiber of $\Phi_{\bfu_{n+1}}$ has dimension at least $n$.
\end{proposition}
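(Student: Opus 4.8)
The plan is to exhibit an explicit positive-dimensional family of tensors inside a generic fiber of $\Phi_\bft$ by exploiting the isotropy action. Fix a generic tensor $T \in V \otimes S^{d-1}W^*$. By \autoref{lemma: same eigenvalues via stabilizer}, every tensor of the form $g \cdot T$ with $g \in G_\bft$ has the same $\bft$-characteristic polynomial as $T$, hence the orbit $G_\bft \cdot T$ is entirely contained in the fiber $\Phi_\bft^{-1}(\Phi_\bft(T))$. Therefore it suffices to bound the dimension of this orbit from below: $\dim G_\bft \cdot T = \dim G_\bft - \dim \Stab_{G_\bft}(T)$, where $\Stab_{G_\bft}(T) = \{g \in G_\bft : g \cdot T = T\}$. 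So the whole argument reduces to showing that, for generic $T$, the stabilizer $\Stab_{G_\bft}(T)$ is at most one-dimensional.

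The key step is thus to control $\Stab_{G_\bft}(T)$ for $T$ generic. First I would note that the non-faithfulness subgroup $C = \{(\alpha^{d-1}\id_V, \alpha\id_W) : \alpha \in \bbC^\times\}$ acts trivially on all of $V \otimes S^{d-1}W^*$, so $C \subseteq \Stab_{G_\bft}(T)$ always; this already accounts for the ``$-1$'' in the statement. It remains to show that for generic $T$ the stabilizer is contained in $C$, or at worst differs from it by a finite group, so that $\dim \Stab_{G_\bft}(T) = 1$. I would argue this by a dimension count: the group $G_\bft$ acts on the irreducible variety $V \otimes S^{d-1}W^*$, and the locus of tensors with positive-dimensional stabilizer modulo $C$ is a proper closed subvariety. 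Concretely, if $\dim G_\bft = m$, a generic orbit has dimension $m - 1$ provided some orbit has dimension $m-1$; since $G_\bft \subseteq \GL(V) \times \GL(W)$ and the $\GL(V) \times \GL(W)$-orbit of a generic tensor already has dimension far exceeding $m$ (e.g.\ because the generic tensor has finite $\GL(V)\times\GL(W)/C$-stabilizer, as its partially symmetric flattening is generic of maximal rank), the restriction of this action to the subgroup $G_\bft$ can only have kernel $C$ on a generic point. For the special case $\bft = \bfu_{n+1}$ one has $G_{\bfu_{n+1}} = \bbT_{n+1} \rtimes \frakS_{n+1}$ by \autoref{lemma: u is stabilized only by YY and permutations}, so $\dim G_{\bfu_{n+1}} = n+1$, and the bound $\dim G_\bft - 1$ becomes $n$. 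One checks directly that the torus $\bbT_{n+1}$ acts on $V \otimes S^{d-1}W^*$ with kernel exactly $C \cap \bbT_{n+1}$, a one-dimensional subtorus, and that a generic $T$ is not fixed by any additional torus element or permutation.

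The main obstacle I anticipate is making the genericity argument for $\Stab_{G_\bft}(T) = C$ fully rigorous without circular reasoning: one must show that the set of $T$ fixed (modulo $C$) by some one-parameter subgroup of $G_\bft$ is a proper closed subset. For a torus this is concrete — the fixed locus of a subtorus is a coordinate subspace, a proper subspace as soon as the subtorus acts nontrivially, which it does outside $C$ by inspecting the weights $a_i^{d-1}(a_{j_1}\cdots a_{j_{d-1}})^{-1}$ computed in the excerpt. For the semidirect product with $\frakS_{n+1}$ the extra components are finite, so they do not affect the dimension count. I would therefore organize the proof as: (1) recall $C \subseteq \Stab_{G_\bft}(T)$ for all $T$; (2) invoke \autoref{lemma: same eigenvalues via stabilizer} to place $G_\bft \cdot T$ in the fiber; (3) show the complement of $C$ in $G_\bft$ fixes only a proper closed subvariety of tensors, hence the generic orbit has dimension $\dim G_\bft - 1$; (4) specialize to $\bft = \bfu_{n+1}$ using \autoref{lemma: u is stabilized only by YY and permutations} to get the bound $n$.
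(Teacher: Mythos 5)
Your overall strategy is exactly the one in the paper: use \autoref{lemma: same eigenvalues via stabilizer} to place the $G_\bft$-orbit of $T$ inside the fiber, then bound the orbit dimension by $\dim G_\bft - \dim\Stab_{G_\bft}(T)$, and reduce to showing that the generic stabilizer in $G_\bft$ is exactly the one-dimensional group $C$. That reduction is correct.

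The gap is in how you justify $\dim\Stab_{G_\bft}(T) = 1$ for generic $T$. You assert that a generic $T$ has finite $\bigl(\GL(V)\times\GL(W)\bigr)/C$-stabilizer, ``as its partially symmetric flattening is generic of maximal rank.'' This is false when $(n,d)=(1,3)$: there $V\otimes S^2W^*$ is $6$-dimensional while $\GL(V)\times\GL(W)$ is $8$-dimensional, so \emph{every} orbit has stabilizer of dimension at least $2>\dim C$; indeed the dense orbit is that of $\bfu_2$, whose stabilizer $\bbT_2\rtimes\frakS_2$ is $2$-dimensional. So ``the locus of tensors with positive-dimensional stabilizer modulo $C$'' is the whole space in this case, not a proper closed subvariety, and your dimension count does not go through. (The maximal-rank-of-flattening observation also does not by itself say anything about stabilizer dimension, so even in the other cases the claim is merely asserted.) The paper fixes this by a case split: for $(n,d)=(1,3)$ it observes there are finitely many $\GL(V)\times\GL(W)$-orbits, reduces to $\bft=\bfu_2$, and checks by explicit computation that $G_T\cap G_{\bfu_2}=C$ for generic $T$; for $d\ge 4$ or $n\ge 2$ it exhibits a concrete tensor, namely $T=\bfu_{n+1}+\bigl(\sum_i v_i\bigr)\otimes\bigl(\sum_i x_i\bigr)^{d-1}$, with $\dim G_T=1$, and concludes by upper semicontinuity of stabilizer dimension. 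Your torus-fixed-locus idea is reasonable when $\bft=\bfu_{n+1}$, where $G_\bft$ is known explicitly, but for arbitrary $\bft$ with $\res(\bft)\neq 0$ you do not know $G_\bft$, which is why the paper's indirect route through the full stabilizer $G_T$ (together with the case split) is the one that closes the argument.
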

\begin{proof}
Let $C = \{ ( \alpha^{d-1} \id_{V} , \alpha \id_W) \in \GL(V) \times \GL(W)\mid \alpha \in \bbC^\times\}$. Since $C$ acts trivially on $V \otimes S^{d-1} W^*$, we have $C \subseteq G_T$ for every $T\in V \otimes S^{d-1} W^*$. 

First, assume $(n,d)=(1,3)$. In this case, the space $V \otimes S^2 W^*$ has finitely many orbits under the action of $\GL(V) \times \GL(W)$ \cite{Sylv:PrinciplesCalculusForms}. There is a unique orbit whose elements satisfy $\res(\bft) \neq 0$; this is the orbit of $\bfu_2$, which is dense. In particular, in this case we may assume $\bft = \bfu_2$, hence $\dim G_{\bft} = 2$ by \autoref{lemma: u is stabilized only by YY and permutations}. An explicit calculation shows that if $T \in V \otimes S^2 W^*$ is generic, then $G_T \cap G_\bft = C$, hence the $G_\bft$-orbit of $T$ has dimension $1$. By \autoref{lemma: same eigenvalues via stabilizer}, all the elements of this orbit have the same characteristic polynomial, hence $\dim \Phi_\bft^{-1}\Phi_\bft(T) \geq 1$ as desired. 

If $d \geq 4$ or $n \geq 2$, then a generic element $T \in V \otimes S^{d-1} W^*$ satisfies $\dim G_T = \dim C = 1$. This fact is classical and it can be seen as a consequence of \cite{BryReiVRaa:ExistenceLocMaxEntStatesViaGIT}. Explicitly, by a semicontinuity argument, it is enough to exhibit an element $T \in V \otimes S^{d-1} W^*$ satisfying $\dim G_T = 1$. When $d = 3$, several examples of such tensors are given in \cite{ConGesLanVenWan:GeometryStrassenAsyRankConj} and they easily generalize to higher order. For instance the tensor $T = \bfu_{n+1} + (\sum_0^n v_i) \otimes (\sum_0^n x_i)^{d-1}$ satisfies $\dim G_T = 1$. In particular $\dim (G_T \cap G_\bft) = \dim C$. Therefore the $G_\bft$-orbit of $T$ has dimension $\dim G_\bft -1$, and by \autoref{lemma: same eigenvalues via stabilizer} it is contained in the fiber $\Phi_\bft^{-1}\Phi_\bft(T)$. This proves the first part of the statement.

For the special case $\bft=\bfu_{n+1}$, \autoref{lemma: u is stabilized only by YY and permutations} implies that the general fiber of $\Phi_{\bfu_{n+1}}$ has dimension at least $\dim G_{\bfu_{n+1}} -1= \dim\T_{n+1}-1=n$.
\end{proof}

In the symmetric setting, one restricts the action to the subgroup of $\GL(V) \times \GL(W)$ preserving the space of symmetric tensors, that is a diagonal copy of $\GL(W)$. Let $G^\sym_\bft$ be the isotropy group of a symmetric tensor $\bft$ with respect to the action. Note that the finite subgroup $C^\sym = \{ \alpha \ \id_W : \alpha^d = 1\}$ is contained in $G^\sym_\bft$ for every tensor $\bft$; note that $\dim C^{\sym} = 0$. One has an analog of \autoref{lemma: generic orbit in generic fiber}:

\begin{proposition}
Let $d \geq 3$ and let $\bft \in S^{d} W^*$ be a symmetric tensor with $\disc(\bft) \neq 0$. Then the generic fiber of $\Phi_\bft$ has dimension at least $\dim G^{\sym}_{\bft}$.
\end{proposition}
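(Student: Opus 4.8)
The plan is to mirror the argument given for \autoref{lemma: generic orbit in generic fiber}, replacing the group $H = \bbT_{n+1} \rtimes \frakS_{n+1}$ and the subgroup $C$ with their symmetric analogues $G^\sym_\bft$ and $C^\sym$, and exploiting the fact that now $C^\sym$ is \emph{finite}, so that the orbit dimension equals the full dimension of $G^\sym_\bft$ rather than $\dim G_\bft - 1$. The key geometric input is again \autoref{lemma: same eigenvalues via stabilizer}: if $\bft$ is symmetric and $g = (g_W,g_W) \in G^\sym_\bft$ (a diagonal element of $\GL(W)$ fixing $\bft$), then $g \cdot T$ has the same $\bft$-characteristic polynomial as $T$, so the entire $G^\sym_\bft$-orbit of a symmetric tensor $T$ is contained in the fiber $\Phi_{\bft,\sym}^{-1}\Phi_{\bft,\sym}(T)$.

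First I would fix a generic symmetric tensor $T \in S^d W^*$ and observe that, by the same classical genericity results invoked in the proof of \autoref{lemma: generic orbit in generic fiber} (finitely many orbits in the exceptional low cases, and the consequence of \cite{BryReiVRaa:ExistenceLocMaxEntStatesViaGIT} in general), the diagonal $\GL(W)$-isotropy group $G^\sym_T$ of a generic symmetric $T$ is finite, equal to $C^\sym$ up to finite index — concretely, for $d \geq 3$ and $n \geq 1$ a generic form of degree $d$ in $n+1$ variables has finite (indeed, for $d \geq 3$, at most the obvious scalar) stabilizer in $\GL(W)$. Hence $G^\sym_T \cap G^\sym_\bft$ is finite, of dimension $0$, and the $G^\sym_\bft$-orbit of $T$ has dimension exactly $\dim G^\sym_\bft$. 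Combined with the previous paragraph, this gives $\dim \Phi_{\bft,\sym}^{-1}\Phi_{\bft,\sym}(T) \geq \dim G^\sym_\bft$ for generic $T$, which is the claim since the statement concerns the generic fiber.

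There is one subtlety to address: unlike the non-symmetric case, where we had to subtract $1$ because the one-dimensional group $C$ acts trivially on the whole space, here $C^\sym$ acts on $S^d W^*$ by $\alpha \cdot F = \alpha^{-d} F$ (scaling), which is trivial on the \emph{projective} space $\bbP S^d W^*$ but not on the affine space — yet since $C^\sym$ is finite this contributes nothing to dimensions, so no correction term appears. I would also note that the orbit of $T$ under $G^\sym_\bft$ lies in the affine space $S^d W^*$ and its closure is an affine variety of dimension $\dim G^\sym_\bft$ contained in a single fiber; one can work affinely here since the characteristic polynomial map $\Phi_{\bft,\sym}$ is a genuine morphism on $S^d W^*$.

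**The main obstacle** is the input asserting that a generic symmetric tensor has finite $\GL(W)$-stabilizer; this is the symmetric counterpart of the $\dim G_T = \dim C$ fact used before, and it requires either citing the general GIT-theoretic result of \cite{BryReiVRaa:ExistenceLocMaxEntStatesViaGIT} or exhibiting an explicit symmetric $T$ with finite stabilizer in each relevant $(n,d)$ — for instance a Fermat-type form plus a generic perturbation, analogous to the tensor $\bfu_{n+1} + (\sum v_i) \otimes (\sum x_i)^{d-1}$ used in the proof of \autoref{lemma: generic orbit in generic fiber}. Once that genericity statement is in hand, the rest is a direct transcription of the earlier argument with the bookkeeping of trivial versus non-trivial acting subgroups adjusted for the symmetric setting.
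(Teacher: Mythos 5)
Your proposal follows essentially the same approach as the paper, which explicitly states that this proposition is proved by the same argument as \autoref{lemma: generic orbit in generic fiber} with the obvious modifications; you have correctly identified the two key inputs (the orbit is contained in the fiber via \autoref{lemma: same eigenvalues via stabilizer}, and a generic symmetric $T$ has finite diagonal $\GL(W)$-stabilizer) and correctly explained why the $-1$ correction disappears in the symmetric setting. One small slip worth noting: $C^\sym = \{\alpha\,\id_W : \alpha^d = 1\}$ is \emph{by definition} the finite subgroup acting trivially on the affine space $S^d W^*$ (since $\alpha^d = 1$ forces $\alpha^{-d}F = F$), not merely on the projectivization, so your remark that it is ``trivial on the projective space but not on the affine space'' conflates $C^\sym$ with the full scalar group $\bbC^\times \id_W$; this does not affect the validity of the argument, since what matters is only that $\dim C^\sym = 0$.
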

\begin{proof}
The proof is similar to the one of \autoref{lemma: generic orbit in generic fiber}.
\end{proof}
Under the identification $V \cong W^*$ induced by the choice of basis, we have $\bfu_{n+1} = \sum_0^n x_i^d \in S^d W^*$. The isotropy group of the symmetric tensor $\bfu_{n+1}$ is the finite group $G_{\bfu_{n+1}}^\sym \cong (\bbZ_d)^{n+1} \rtimes \frakS_{n+1} \subseteq \GL(W)$; we refer to \cite[Section 8.12.1]{Lan:GeometryComplThBook} for an explicit proof. Since $G_{\bfu_{n+1}}^\sym$ is finite, we do not obtain a lower bound to the dimension of the generic fibers of $\Phi_\sym$, unlike the case of general tensors. In this setting, we expect the generic fiber of $\Phi_\sym$ to be an orbit for the action of $ G_{\bfu_{n+1}}^\sym$, at least when $n$ and $d$ are sufficiently large; if this is the case, then the generic fiber is finite, as predicted by \autoref{conj:fibre symmetric}, and it has degree $d^n (n+1)!$.

\section{Multiplicity of eigenvalues and higher Hurwitz forms}
Motivated by \autoref{rmk: it is enough to prove that phi is defined everywhere}, in this section we study the indeterminacy locus of the characteristic polynomial maps $\Phi_\bft$ and $\Phi_{\bft,\sym}$. We will describe them in terms of intersection properties of lines with the resultant and the discriminant hypersurfaces. To do so, we recall the definitions of tangent cones and multiplicity, together with some of their elementary properties, and their consequences in the context of characteristic polynomials. In particular, we will show a tensor analog of the fact that matrices with distinct eigenvalues have linearly independent eigenvectors, see \autoref{thm: distinct evals implies distinct evecs, new}. 

We point out that \autoref{lemma: characteristic polynomial} gives a natural generalization of the algebraic multiplicity of an eigenvalue: it is the multiplicity of $\lambda$ as a root of the characteristic polynomial. Generalizing the notion of geometric multiplicity is more delicate. Some generalizations have been proposed in \cite[Section 1]{multiplicity} and in \cite[Section 1]{GMV23} in terms, for instance, of the dimension of the eigenscheme of $T$, or of the dimension of its linear span.

\subsection{Tangent cones and multiplicities}

We recall the notion of tangent cone and multiplicity in the case of hypersurfaces, and we refer to \cite[Chapter 2]{Shaf:BasicAlgGeom1} for an extensive discussion.

Let $X \subseteq \bbP^N$ be the hypersurface defined by the homogeneous polynomial $F \in \bbC[x_0\vvirg x_N]$. The \emph{multiplicity} of $X$ at the point $p \in \bbP^N$ is 
\[
\mult_X(p) = \min \left\{ m\in\N\mid \partial_\alpha F (p) \neq 0 \text{ for some multi-index $\alpha$ of length $m$}\right\},
\]
where $\partial_\alpha F = \frac{\partial F} {\partial \bfx^\alpha}$. For example, $\mult_X(p) \geq 1$ if and only if $p \in X$, and equality holds if and only if $p$ is a smooth point of $X$. If $p \in X$ and $m = \mult_X(p)$, then the \emph{tangent cone} to $X$ at $p$ is 
\[
TC_p X = \left\{ [\bfx] \in \bbP^N \mid \sum_{ | \alpha | = m} 
\partial_\alpha F(p) \cdot \bfx^\alpha= 0\right\} \subseteq \bbP^N.
\]
It is a hypersurface of degree $m$ which is a cone with vertex $p$. If $\mult_X(p) = 1$, then $TC_p X$ is a linear space and it coincides with the tangent space to $X$ at $p$.

We record an immediate result:
\begin{lemma}\label{lemma: multiplicity bounded from below by mult, new}
    Let $\bft,T \in V \otimes S^{d-1} W^*$ with $\res(\bft) \neq 0$. If $\lambda_0$ is a $\bft$-eigenvalue of $T$ with algebraic multiplicity $m$, then $m \geq \mult_{\calRes}(T -\lambda_0\bft)$.
\end{lemma}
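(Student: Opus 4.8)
The plan is to realise $\phi^\bft_T(\lambda) = \res(T-\lambda\bft)$ as the restriction of the resultant polynomial to the affine line $\ell = \{T-\lambda\bft : \lambda\in\bbC\}\subseteq V\otimes S^{d-1}W^*$, and to compare the vanishing order of this restriction at $\lambda_0$ with the multiplicity of $\calRes$ at the point $p := T-\lambda_0\bft$. The only place the hypothesis $\res(\bft)\neq 0$ is needed is to keep everything non-degenerate: the leading coefficient of the univariate polynomial $g(\lambda):=\res(T-\lambda\bft)$ is, up to sign, $\res(\bft)\neq 0$, so $g$ is a nonzero polynomial of degree $D(n,d)$, the line $\ell$ is not contained in $\calRes$, and by \autoref{lemma: characteristic polynomial} the algebraic multiplicity $m$ of the eigenvalue $\lambda_0$ is exactly the order of vanishing of $g$ at $\lambda_0$. (We may assume $p\neq 0$: if $T=\lambda_0\bft$ then $m=D(n,d)$ and $T-\lambda_0\bft$ is not a point of the ambient projective space, so the stated inequality is vacuous.)

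The heart of the argument is a one-variable Taylor expansion of $F:=\res$ at $p$ in the direction $\bft$. Setting $\mu=\lambda-\lambda_0$, we have $g(\lambda)=F(p-\mu\bft)$, and for every $j\geq 0$ the chain rule gives
\[
g^{(j)}(\lambda_0) = (-1)^j \sum_{|\alpha|=j} \binom{j}{\alpha}\, \bft^{\alpha}\, \partial_\alpha F(p),
\]
so $g^{(j)}(\lambda_0)$ is a $\bbC$-linear combination of the partial derivatives $\partial_\alpha F(p)$ with $|\alpha|=j$, with coefficients depending only on the coordinates of $\bft$. Let $m_0:=\mult_{\calRes}(p)$. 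By the definition of multiplicity, $\partial_\alpha F(p)=0$ for every multi-index $\alpha$ with $|\alpha|<m_0$; hence $g^{(j)}(\lambda_0)=0$ for all $j<m_0$, which says precisely that $g$ vanishes at $\lambda_0$ to order at least $m_0$. Therefore $m\geq m_0=\mult_{\calRes}(T-\lambda_0\bft)$, as claimed.

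I do not expect a genuine obstacle here: the statement is an instance of the elementary intersection-theoretic fact that a line through a point $p$ of a hypersurface $X$ meets $X$ at $p$ with multiplicity at least $\mult_X(p)$, with equality unless the line lies in the tangent cone $TC_p X$. The only mild care needed is bookkeeping --- keeping the affine line in $V\otimes S^{d-1}W^*$ distinct from its image in $\bbP(V\otimes S^{d-1}W^*)$, on which $\mult_{\calRes}$ is defined, and disposing of the degenerate case $T=\lambda_0\bft$ noted above. One could equivalently phrase the whole proof in terms of the intersection multiplicity of $\ell$ with $\calRes$ at $p$; this is the point of view that will be exploited in the later sections, where the equality case (the line lying in the tangent cone) is analysed for the discriminant hypersurface.
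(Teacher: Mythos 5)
Your proof is correct and takes essentially the same approach as the paper: both compute the Taylor expansion of $\res$ along the line $\lambda\mapsto T-\lambda\bft$ at the point $T-\lambda_0\bft$ in the direction $\bft$, and observe that all coefficients of order $j<\mult_{\calRes}(T-\lambda_0\bft)$ vanish. Your version is slightly more explicit, spelling out the multi-index chain rule and disposing of the degenerate case $T=\lambda_0\bft$, but the underlying argument is identical.
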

\begin{proof}
Up to scaling,   the coefficient of $\lambda^k$ in the univariate polynomial $\res(T - \lambda \bft)$ is the $k$-th directional derivatives of $\res$ in the direction $\bft$, evaluated at $T$. In particular, it vanishes if $k < \mult_{\calRes}(T -\lambda\bft)$. Therefore $\lambda_0$ is a root of  $\phi^{\bft}_T(\lambda)$ of multiplicity greater or equal than $\mult_{\calRes}(T -\lambda_0\bft)$. This concludes the proof.
\end{proof}

\subsection{Resolution of the resultant hypersurface}

In this section we give an explicit resolution of singularities of the resultant hypersurface $\calRes$, and in particular we give a characterization of its smooth locus. The proof follows \cite[Example 1.2.3]{Dolg:ClassicalAlgGeometry}, which provides the similar characterization for the smooth locus of the discriminant hypersurface in $\bbP S^d W^*$. We expect this construction to be classical, but we could not locate an explicit proof in the literature, hence we provide a full proof for completeness.

Fix a basis $v_0 \vvirg v_n$ of $V$, let $T=\sum_{i=0}^n v_i\ot f_i$ for some $f_0 \vvirg f_n\in S^{d-1}W^*$. Denote by 
\[
\Sing(T) = Z(f_0,\dots,f_n)\subseteq \bbP W
\]
 the scheme defined by $f_0 \vvirg f_n$. Notice that the definition of $\Sing(T)$ does not depend on the choice of the basis of $V$. When $T$ is symmetric, $f_0 \vvirg f_n$ generate the space of first order partial derivatives of $T$ and $\Sing(T)$ is the singular locus of the hypersurface defined by $T$ in $\bbP W$. 

\begin{proposition}\label{smooth locus of Res, new}
The smooth locus of $\calRes \subseteq \bbP ( V \otimes S^{d-1} W^*)$ is the open set
\[
\calRes^{\smooth} = \{ T \in \bbP(V \otimes S^{d-1} W^*)\mid \Sing(T) \text{ is one reduced point}\}.
\]
\end{proposition}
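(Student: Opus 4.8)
The plan is to exhibit $\calRes$ as the image of a smooth variety and to analyse the resulting morphism, exactly as is done for the discriminant in \cite[Example~1.2.3]{Dolg:ClassicalAlgGeometry}. First I would introduce the incidence variety
$\tilde\calRes = \{([T],[w]) \in \bbP(V \otimes S^{d-1}W^*) \times \bbP W : T(w) = 0\}$; writing $T = \sum_i v_i \otimes f_i$, this is the bihomogeneous condition $f_0(w) = \dots = f_n(w) = 0$. Projecting to the second factor exhibits $\tilde\calRes$ as a projective subbundle of $\bbP(V \otimes S^{d-1}W^*) \times \bbP W$ over $\bbP W$, the fibre over $[w]$ being the linear subspace of codimension $n+1$ cut out by the conditions $f_i(w) = 0$. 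Hence $\tilde\calRes$ is smooth and irreducible of dimension $\dim \bbP(V \otimes S^{d-1}W^*) - 1$. The first projection $p \colon \tilde\calRes \to \bbP(V \otimes S^{d-1}W^*)$ is proper, has image exactly $\calRes$ by the definition of the resultant, and is generically injective since a general element of $\calRes$ has a unique, reduced singular point; so $p$ is a resolution of singularities of $\calRes$. Directly from the defining equations I record three facts: the scheme-theoretic fibre is $p^{-1}(T) = \Sing(T)$; the kernel of $dp$ at $([T],[w])$ is the Zariski tangent space $T_{[w]}\Sing(T)$; and, crucially, if $[w] \in \Sing(T)$ and $\dot T(w) = 0$, then $(T + s\dot T)(w) = 0$ for all $s$, so the whole line $\{T + s\dot T\}$ lies in $\calRes$ and in particular $\dot T$ belongs to the tangent space to $\calRes$ at $T$.

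For the inclusion ``$\supseteq$'', suppose $\Sing(T_0) = \{[w_0]\}$ is one reduced point. Then $p^{-1}(T_0)$ is a single reduced point and $dp$ is injective at $(T_0,[w_0])$, its kernel being $T_{[w_0]}\Sing(T_0) = 0$. Since the unramified locus of $p$ is open and the fibre over $T_0$ is already a single point, $p$ is a proper unramified monomorphism near $(T_0,[w_0])$, hence a closed immersion onto a neighbourhood of $T_0$ in $\calRes$; being surjective onto a reduced variety, it is a local isomorphism, so $\calRes$ is smooth at $T_0$. The same reasoning shows that $p$ restricts to an isomorphism over the open set $\calRes^0 := \{T : \Sing(T) \text{ is one reduced point}\}$.

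For the inclusion ``$\subseteq$'', let $T_0 \in \calRes$ with $\Sing(T_0)$ not one reduced point; I claim $T_0 \in \Sing(\calRes)$. Since $\Sing(T_0) \neq \emptyset$, either its support contains two distinct points $[w_1] \neq [w_2]$, or $\Sing(T_0)$ is a fat point of length at least $2$. In the first case the evaluation $V \otimes S^{d-1}W^* \to V \oplus V$, $\dot T \mapsto (\dot T(w_1), \dot T(w_2))$, is surjective because $w_1^{d-1}$ and $w_2^{d-1}$ are linearly independent in $S^{d-1}W$; splitting an arbitrary $\dot T$ accordingly gives $\{\dot T : \dot T(w_1) = 0\} + \{\dot T : \dot T(w_2) = 0\} = V \otimes S^{d-1}W^*$. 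By the last observation of the first paragraph both summands lie in the tangent space to $\calRes$ at $T_0$, so this tangent space is the whole ambient space and $T_0 \in \Sing(\calRes)$. In the fat-point case, $\Sing(T_0)$ contains a length-$2$ subscheme $Z$; I would consider $\mathcal{W} = \{([T],Z') \in \bbP(V \otimes S^{d-1}W^*) \times \Hilb^2(\bbP W) : Z' \subseteq \Sing(T)\}$, a projective subbundle over the irreducible variety $\Hilb^2(\bbP W)$ (the condition $Z' \subseteq \Sing(T)$ cuts out a linear subspace of constant codimension $2(n+1)$), hence itself irreducible. The locus where $Z'$ is reduced is dense in $\mathcal{W}$, so the proper image of $\mathcal{W}$ in $\bbP(V \otimes S^{d-1}W^*)$ is the closure of $\{T : \Sing(T) \text{ contains two distinct points}\}$, which by the previous case lies in the closed set $\Sing(\calRes)$; since $(T_0, Z) \in \mathcal{W}$, we get $T_0 \in \Sing(\calRes)$. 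Together with the second paragraph this yields $\calRes^{\smooth} = \calRes^0$.

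The main obstacle I expect lies in the ``$\supseteq$'' step: upgrading ``injective differential plus proper plus injective on points'' to an honest local isomorphism needs care with the étale-local structure of unramified morphisms and with the scheme structures involved (in particular the identification $p^{-1}(T) = \Sing(T)$ scheme-theoretically, and the computation of $\ker dp$). Verifying that $p$ is indeed a resolution — especially the generic injectivity, i.e.\ that a general singular $T$ has a single reduced singular point — and checking the projective-subbundle claims for $\tilde\calRes$ and $\mathcal{W}$ are the other points requiring a careful, though routine, argument.
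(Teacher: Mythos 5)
Your proof is correct, and while the resolution $\tilde\calRes \to \calRes$ and the argument for the inclusion ``$\supseteq$'' (single reduced singular point $\Rightarrow$ smooth) are essentially the same as the paper's --- both following Dolgachev's treatment of the discriminant --- your argument for ``$\subseteq$'' takes a genuinely different route. The paper splits into the cases where $\Sing(T_0)$ is zero-dimensional or positive-dimensional: in the first case it invokes Zariski's Main Theorem to conclude $T_0$ is non-normal, and in the second it appeals to the Trisecant Lemma to deform to the two-point situation and then uses closedness of the singular locus. You instead split by whether $\Sing(T_0)$ has two distinct support points or is a fat point. For two distinct points you give a purely linear-algebraic computation: because $w_1^{\otimes(d-1)}$, $w_2^{\otimes(d-1)}$ are independent, the two linear subspaces $\{\dot T : \dot T(w_i)=0\}$ sum to the whole space, and each lies in $T_{T_0}\calRes$ since the corresponding lines sit inside the cone over $\calRes$; hence $d\,\res_{T_0}=0$. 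For a fat point you degenerate from the reduced case through the irreducible projective bundle $\mathcal W$ over $\Hilb^2(\bbP W)$ and use closedness of $\Sing(\calRes)$. This buys you a more self-contained and elementary proof: it avoids Zariski's Main Theorem entirely (and the attendant care about normality) as well as the Trisecant Lemma, which the paper invokes somewhat loosely here, and it in fact treats $d=2$ uniformly whereas the paper defers that case to ACGH. The points you flag as requiring care --- the scheme-theoretic identification $p^{-1}(T)=\Sing(T)$, the computation $\ker dp = T_{[w]}\Sing(T)$, and the Nakayama-style upgrade from ``proper, unramified, single reduced-point fibre'' to ``closed immersion onto a neighbourhood, hence local isomorphism onto the reduced irreducible $\calRes$'' --- are exactly the right ones to worry about, and all go through.
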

\begin{proof}
The statement holds for $d=2$; see for instance \cite[Section II.2]{ArCoGrHa:Vol1}. If $d\ge 3$, consider the incidence correspondence 
    \[
    \tilde{\calRes} = \{ (T, w) \in \bbP (V \otimes S^{d-1} W^*) \times \bbP W\mid w \in  \Sing(T) \}. 
    \]
Note that $\tilde{\calRes}$ is a closed subvariety of $\bbP (V \otimes S^{d-1} W^*) \times \bbP W$. Let $\pi_\calRes : \tilde{\calRes} \to \bbP (V \otimes S^{d-1} W^*)$ and $\pi_W :  \tilde{\calRes} \to \bbP W$ be the two projections. The image of $\pi_{\calRes}$ is the hypersurface $\calRes \subseteq \bbP (V \otimes S^{d-1} W^*)$. Moreover $\tilde{\calRes}$ is smooth: indeed the map $\pi_W$ realizes $\tilde{\calRes}$ as a projective bundle over $\bbP W$. The fiber over $p\in \p W$ is the linear subspace $\bbP ( V \otimes \calI_{d-1} (p))$, where $\calI_{d-1} (p)\subseteq S^{d-1} W^*$ is the component of degree $d-1$ of the ideal of $p$.

By definition $\pi_\calRes^{-1}(T) = \{T\} \times \Sing(T)$. In particular, if $\Sing(T) = \{p\}$ is a single point, then $(T,p)$ is the unique point in the fiber $\pi_\calRes^{-1}(T)$. In this case a straightforward calculation, similar to the one of \cite[Example 1.2.3]{Dolg:ClassicalAlgGeometry}, shows that $\pi_\calRes$ is an isomorphism in a (Zariski) neighborhood of $(T,p)$ and therefore $T$ is a smooth point of $\calRes$.

It remains to show that $T$ is a singular point of $\calRes$ if $\Sing(T)$ is not a reduced point. If $\Sing(T)$ is a $0$-dimensional scheme, then $\pi_\calRes^{-1}(T)$ is $0$-dimensional of length higher than the one of the generic fiber. In this case the Zariski Main Theorem \cite[Section III.11]{Hart:AlgGeom} guarantees that $T$ is a non-normal point of $\calRes$, and in particular it is singular.

If $\Sing(T)$ is positive-dimensional, then let $\{p_1,p_2\}$ be two points in $\Sing(T)$. Since $d\ge 3$, by the classical Trisecant Lemma \cite[Proposition 1.4.3]{Russo:GeometrySpecialProjVars}, the generic element  $T' \in V \otimes \calI_{d-1}(p_1,p_2)$ satisfies $\Sing(T') = \{ p_1,p_2\}$ and therefore it is in the singular locus of $\calRes$. Since $T \in V \otimes \calI_{d-1}(p_1,p_2)$ and the singular locus is Zariski closed, we conclude that $T$ is a singular point of $\calRes$. 
\end{proof}

\subsection{Simple eigenvalues}
In this section we generalize the classical result stating that a $(n+1)\times (n+1)$ matrix with distinct eigenvalues has $n+1$ linearly independent eigenvectors. We will prove that if a tensor $T$ has $D(n,d)$ distinct eigenvalues, then it has $D(n,d)$ distinct eigenvectors as well. This extends \cite[Lemma 6.1]{multiplicity}, which is the analogous statement when $\bft=\bfu_{n+1}$ and $T$ is generic in $V \otimes S^{d-1} W^*$.
\begin{theorem}\label{thm: distinct evals implies distinct evecs, new}
Let $\bft,T \in V \otimes S^{d-1} W^*$ with $\res(\bft) \neq 0$. If the $\bft$-characteristic polynomial $\phi^\bft_T(\lambda)$ has $D(n,d)$ distinct roots, then the \emph{eigenscheme} $E^\bft_T = \{ w \in \bbP W: (\lambda,w) \text{ is a $\bft$-eigenpair of $T$ for some $\lambda$}\}$ consists of $D(n,d)$ reduced points.
\end{theorem}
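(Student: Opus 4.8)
The plan is to realize the eigenscheme $E^\bft_T$ as the zero scheme of a section of a rank $n$ vector bundle on $\bbP W$, to show that this zero scheme is $0$-dimensional with exactly $D(n,d)$ points in its support, and then to force reducedness by comparing its total length with the degree of the top Chern class of the bundle.

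First I would set up the bundle. Since $\res(\bft)\neq 0$, the scheme $\Sing(\bft)\subseteq\bbP W$ is empty, so the section of $\calE=V\otimes\calO_{\bbP W}(d-1)$ determined by $\bft$ is nowhere vanishing and cuts out a sub-line-bundle $\calO_{\bbP W}\cdot\bft\subseteq\calE$. Setting $\calQ=\calE/(\calO_{\bbP W}\cdot\bft)$, a bundle of rank $n$, the tensor $T$ induces a section $\bar T\in H^0(\bbP W,\calQ)$, and because $\bft(w)\neq 0$ for every $w$, a point $w$ lies in $Z(\bar T)$ exactly when $T(w)\in\langle\bft(w)\rangle$. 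On the open set where a fixed component $\bft(w)_{i_0}$ is nonzero (these cover $\bbP W$ precisely because $\Sing(\bft)=\emptyset$) the $2\times2$ minors of the matrix with rows $T(w),\bft(w)$ generate, up to units, the ideal of the components of $\bar T$ in a local trivialization of $\calQ$, so $Z(\bar T)=E^\bft_T$ as schemes. From the exact sequence $0\to\calO_{\bbP W}\to\calE\to\calQ\to 0$ one gets $c(\calQ)=c(\calE)=(1+(d-1)h_W)^{n+1}$, hence $\deg c_n(\calQ)=(n+1)(d-1)^n=D(n,d)$.

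Next I would control the support. The hypothesis forces $[T]\neq[\bft]$ (otherwise $\phi^\bft_T$ would be a perfect power of a linear form, hence would not have $D(n,d)\geq 2$ distinct roots), so the line $L\subseteq\bbP(V\otimes S^{d-1}W^*)$ spanned by $[T]$ and $[\bft]$ is defined; since $\res(\bft)\neq 0$ it is not contained in $\calRes$, and restricting $\res$ to $L$ recovers $\phi^\bft_T(\lambda)$ with no zero at the point $[\bft]$ at infinity. As $\phi^\bft_T$ has $D(n,d)=\deg\phi^\bft_T$ distinct roots, every root $\lambda_0$ is simple, so $L$ meets $\calRes$ at $[T-\lambda_0\bft]$ with intersection multiplicity $1$. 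Since a line through a point of multiplicity $m$ of a hypersurface meets it there with multiplicity at least $m$, this forces $\mult_{\calRes}(T-\lambda_0\bft)=1$, i.e. $T-\lambda_0\bft$ is a smooth point of $\calRes$, and \autoref{smooth locus of Res, new} then gives that $\Sing(T-\lambda_0\bft)$ is a single reduced point, the unique eigenvector of $T$ with eigenvalue $\lambda_0$. Because the eigenvalue of any eigenvector $w$ is uniquely determined (as $\bft(w)\neq 0$) and is a root of $\phi^\bft_T$ by \autoref{lemma: characteristic polynomial}, the underlying set of $E^\bft_T$ is the disjoint union $\bigsqcup_{\lambda_0}\Sing(T-\lambda_0\bft)$ over the $D(n,d)$ roots $\lambda_0$; in particular $E^\bft_T$ is $0$-dimensional with support exactly $D(n,d)$ points. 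Finally, since $Z(\bar T)$ is a $0$-dimensional zero scheme of a section of the rank $n$ bundle $\calQ$ on the $n$-dimensional variety $\bbP W$, its total length equals $\deg c_n(\calQ)=D(n,d)$; as its support has $D(n,d)$ points, each of length at least $1$, every point has length exactly $1$, so $E^\bft_T$ is reduced of length $D(n,d)$.

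I expect the main obstacle to be making Step 1 fully rigorous: identifying the scheme structure on the eigenscheme and matching it, as a scheme and not merely as a set, with $Z(\bar T)$, together with the appeal to the identity ``length of a $0$-dimensional zero scheme of a section equals the degree of the top Chern class'' (standard, e.g.\ via Fulton's refined top Chern class, but it must be invoked with the $0$-dimensionality hypothesis in place). It is worth emphasizing that the global length count is genuinely needed for reducedness: the scheme-theoretic fibre $\Sing(T-\lambda_0\bft)$ of the eigenvalue morphism being a reduced point only shows that the maximal ideal at the corresponding point of $E^\bft_T$ is principal, which on its own does not exclude embedded nilpotents; it is the equality between the total length $D(n,d)$ and the number of points of the support that closes the argument.
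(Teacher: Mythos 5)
Your proof is correct, and it is a genuine refinement of the paper's argument. The paper's proof and yours share the same mechanism for controlling the support: each root $\lambda_0$ of $\phi^\bft_T$ is simple, so by \autoref{lemma: multiplicity bounded from below by mult, new} the point $T-\lambda_0\bft$ has multiplicity $1$ on $\calRes$, hence by \autoref{smooth locus of Res, new} the scheme $\Sing(T-\lambda_0\bft)$ is a single reduced point, and distinct roots give distinct points. Where you differ is in the passage from this to reducedness of the eigenscheme. The paper writes $E^\bft_T=\bigcup_\lambda\Sing(T-\lambda\bft)$ and treats it as sufficient that each piece is a reduced point. But as you correctly observe in your closing paragraph, the local containment goes the wrong way: in a neighbourhood of an eigenvector $w_0$ with eigenvalue $\lambda_0$, one has $\calI_{E^\bft_T}\subseteq\calI_{\Sing(T-\lambda_0\bft)}$, so $\Sing(T-\lambda_0\bft)$ is in general a \emph{proper subscheme} of the localization of $E^\bft_T$ at $w_0$ (the Jordan-block example $T=v_0\otimes x_1$, $\bft=v_0\otimes x_0+v_1\otimes x_1$ with $d=2$, $n=1$ shows that $\Sing(T)$ can be reduced while the eigenscheme is a double point — though of course there the eigenvalue is not simple). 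Reducedness of the fiber of the local eigenvalue morphism therefore shows only that the maximal ideal of $\calO_{E^\bft_T,w_0}$ is principal, which bounds the length from below, not above. Your bundle-theoretic setup — realizing $E^\bft_T$ as $Z(\bar T)$ for a section $\bar T$ of $\calQ=\calE/(\calO\cdot\bft)$, identifying the two scheme structures locally via the $2\times 2$ minors divided by the unit $\bft_{i_0}$, and then invoking the equality between the degree of the $0$-cycle $[Z(\bar T)]$ and $\deg c_n(\calQ)=D(n,d)$ once $0$-dimensionality is in hand — closes precisely this gap: a length-$D(n,d)$ $0$-dimensional scheme supported at $D(n,d)$ points is reduced. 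So what your approach buys is a scheme-theoretically watertight proof of reducedness of the eigenscheme; what the paper's shorter argument buys is brevity, at the cost of leaving the scheme-theoretic identity $E^\bft_T=\bigsqcup_\lambda\Sing(T-\lambda\bft)$ implicit (it is in fact a \emph{consequence} of the length count you supply, not a free input). One small point of care: when invoking the identity between the length of $Z(\bar T)$ and $\deg c_n(\calQ)$, you must first know $Z(\bar T)$ is $0$-dimensional, which you correctly extract from the support analysis before appealing to the Chern-class formula.
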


\begin{proof}
 Note $E^{\bft}_T = \bigcup_{\lambda \in \bbC} \Sing(T - \lambda \bft)$, where the union can be restricted to only those the $D(n,d)$ values $\lambda$ satisfying $\res(T-\lambda \bft) = 0$. By \autoref{lemma: multiplicity bounded from below by mult, new}, the multiplicity of $\lambda$ as a $\bft$-eigenvalue of $T$ is at least $\mult_{\calRes}(T-\lambda \bft)$, hence $\mult_{\calRes}(T-\lambda \bft) = 1$ for all eigenvalues $\lambda$ of $T$. In particular the tensor $T-\lambda \bft$ is a smooth point of $\calRes$, hence $\Sing( T-\lambda \bft )$ is a simple point $w(\lambda) \in \bbP W$ by \autoref{smooth locus of Res, new}. Finally, if $\lambda_1 \neq \lambda_2$, clearly $w(\lambda_1) \neq w(\lambda_2)$ and this concludes the proof.
\end{proof}

It is natural to ask whether the converse of \autoref{thm: distinct evals implies distinct evecs, new} holds.
\begin{remark}
If $d=2$, the converse of \autoref{thm: distinct evals implies distinct evecs, new} is a consequence of the classical spectral theory of matrices. As observed in \autoref{rmk: for matrices, we basically choose t=u}, in this setting it is not restrictive to choose $\bft=\bfu_{n+1}$, and the corresponding choice of basis gives an identification $V \cong W$. If $T \in W \otimes W^*$ has $n+1$ distinct eigenvalues, then every eigenspace has dimension $1$, hence it corresponds to a simple point in $\bbP W$. If the eigenvalues are not distinct, either some eigenspace is of higher dimension, in which case $E^{\bft}_T$ contains infinitely many points, or $T$ is not diagonalizable, in which case $E^{\bft}_T$ is not reduced; see \cite[Corollary 4.9]{matrici}.
\end{remark}

However, when $d\ge 3$ the converse of \autoref{thm: distinct evals implies distinct evecs, new} is not true.
\begin{example} If $d\ge 3$, then there exists a tensor $T \in \calRes$ such that $\Sing(T)$ consists of exactly two points. Let $T$ be generic among the tensors with this property. By Bertini's Theorem, the line $\langle T ,\bfu_{n+1}\rangle$ intersects $\calRes$ with multiplicity $2$ at $T$ and at $D(n,d)-2$ other distinct points which are smooth in $\calRes$. The intersection points different from $T$ yield each an eigenvector, as in the proof of \autoref{thm: distinct evals implies distinct evecs, new}; the intersection at $T$ yields two eigenvectors, which are the two points of $\Sing(T)$, both having eigenvalue $0$. Therefore $T$ has $D(n,d)$ distinct eigenvectors but $\phi^\bft_{T}(\lambda)$ has a root of multiplicity two at $\lambda = 0$.
\end{example}

\subsection{Higher Hurwitz forms}
In light of \autoref{rmk: it is enough to prove that phi is defined everywhere}, we want to determine which tensors have only one eigenvalue. This property is closely related to the geometry of lines intersecting the resultant hypersurface with high multiplicity. To study such lines, we introduce the following objects.
\begin{definition}
Let $X \subseteq \bbP ^N$ be an irreducible hypersurface. The \emph{variety of lines osculating at order $m$} is 
\[
\calH_X^{(m)} = \bar{\bigl\{ L \in \Gr(2, \bbC^{N+1}) \mid L \not \subseteq X, L \cap X \text{ has a component of degree at least $m$}\bigr\}}.
\]
Here $\Gr(2, \bbC^{N+1})$ denotes the Grassmannian of $2$-planes in $\bbC^{N+1}$, or equivalently the Grassmannian of lines in $\bbP^N$. For $D = \deg(X)$, write $\calH_X = \calH_X^{(\deg X)}$.
\end{definition}
Notice that $\calH_X^{(2)}$ is the Hurwitz form defined in \cite{Sturm:HurwitzForm}. In general $\calH_X^{(2)}$ is reducible, and one of its components is a hypersurface in the Grassmannian $\Gr(2, \bbC^{N+1})$: this hypersurface is the closure of the set of lines with the property that the $0$-dimensional component of degree $2$ in $L \cap X$ is supported at a smooth point of $X$. This is the classical Hurwitz form studied in the context of associated varieties \cite[Section 3.2.E]{GKZ:DiscResMultDet}. If $m = 3$, then one of the components of $\calH_X^{(3)}$ is the variety of lines that are tangent to $X$ at a flex point. More generally, classical geometrical results on the existence of flexes of high order on certain hypersurfaces can be phrased in terms of non-emptiness of $\calH_X^{(m)}$.

It is clear that $\calH_X^{(m)} \supseteq \calH_X^{(m+1)}$ for every $m$, and that $\calH_X^{(m)} = \emptyset$ whenever $m \geq \deg X + 1$. If $p \in X$ is a point of multiplicity $m$ then a generic line through $p$ is an element of $\calH^{(m)}_X$ and a generic line through $p$ and contained in $TC_p X$ is an element of $\calH^{(m+1)}_X$.
\begin{example}
    Let $X \subseteq \mathbb{P}^2$ be the nodal cubic curve defined by $f=x_1^2 x_0 - x_1^2(x_0-x_1)$. Since $G(2,3) \cong {\bbP^2}^\vee$, we have that $\calH^{(m)}_X$ are subvarieties of the dual projective space. 
    The variety $\calH^{(2)}_X$ is union of two components: the dual variety of $X$, which is a curve of degree $4$ and the pencil of lines through the node of $X$. This can be determined via an explicit computation as follows. We introduce new variables $u_i,v_i$, with $i\in\{0,1,2\}$, and compute the discriminant of the bivariate polynomial $g(s,t)=f(su_0+tv_0,su_1+tv_1,su_2+tv_2)$. This discriminant vanishes precisely when the projective line parametrized by $(su_0+tv_0,su_1+tv_1,su_2+tv_2)$ does not intersect $X$ in three reduced points. This can be computed by saturating the ideal generated by $g$ and its partial derivatives, and then eliminating the variables $s$ and $t$. In the Pl\"{u}cker coordinates $q_{ij}=u_iv_j-u_jv_i$, we have:
\[
        I_{\calH^{(2)}_X} = \langle q_{01}^2\rangle \cap
        \langle 27q_{01}^2q_{02}^2-4q_{02}^4-36q_{01}q_{02}^2q_{12} + 8q_{02}^2q_{12}^2+4q_{01}q_{12}^3-4q_{12}^4\rangle.
\]
The first component defines the pencil of lines through the node $(0:0:1)$; the second is the dual variety of $X$, consisting of all lines which are tangent to $X$ at some point.

The variety $\calH^{(3)}_X$ consists of five distinct points, corresponding via duality to the tangent lines at the three flexes of $X$ and to the two lines tangent to each branch passing through the node. The explicit computation can be performed similarly as before. The defining ideal of $\calH_X^{(3)}$ is 
\[
    I_{\calH^{(3)}_X} = \langle q_{02},q_{12}\rangle  \cap \langle 9q_{01}-8q_{12}, 3q_{02}^2+q_{12}^2\rangle \cap  \langle q_{01}, q_{02}-q_{12}\rangle \cap \langle q_{01}, q_{02}+q_{12}\rangle.
\]
The first and the second components define the three lines tangent to the flexes of $X$, the third and fourth component define the two tangent lines at the node.
\end{example}

We expect the varieties $\calH_X^{(m)}$ to be hard to study in general. In \autoref{sec: forme binarie} and \autoref{sec: cubiche piane}, we will prove that $\calH_\calDisc  = \emptyset$ in the case of binary forms and plane cubics, respectively. 

The following result shows that indeed $\calH_\calRes$ and $\calH_\calDisc$ control the non-emptiness of the indeterminacy locus of the characteristic polynomial maps $\Phi_\bft$ and $\Phi_{\bft,\sym}$.

\begin{lemma}\label{lemma: only 0 iff H empty}
Let $\bft,T \in V \otimes S^{d-1} W^*$ be tensors with $\res(\bft) \neq 0$. Then $\phi^{\bft}_T(\lambda) = \lambda^{D(n,d)}$ if and only if $T \in \calRes$ and $\langle T , \bft \rangle \subseteq \bbP ( V \otimes S^{d-1}W^*)$ is an element of $\calH_\calRes$.
\end{lemma}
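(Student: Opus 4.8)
The plan is to translate the statement about the univariate polynomial $\phi^\bft_T(\lambda)$ into one about the restriction of $\res$ to the line $L = \langle T,\bft\rangle$, and then match that with the definition of $\calH_\calRes$. Set $D = D(n,d) = \deg\calRes$ and parametrize $L$ as $\{[sT+t\bft] : [s:t]\in\bbP^1\}$, so that $[T]$ is the point $[1:0]$ and $[\bft]$ is $[0:1]$; let $g(s,t) = \res(sT+t\bft)$, a binary form of degree $D$. The hypothesis $\res(\bft)\neq 0$ says $g(0,1)\neq 0$, hence $g\not\equiv 0$ (equivalently $L\not\subseteq\calRes$) and $t\nmid g$. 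Moreover $\phi^\bft_T(\lambda) = \res(T-\lambda\bft) = g(1,-\lambda)$, so the monic characteristic polynomial of $T$ equals $\lambda^{D}$ if and only if $g$ is a nonzero scalar multiple of $t^{D}$, i.e. if and only if the effective divisor $L\cap\calRes = V(g)$ on $L\cong\bbP^1$ is the single point $[1:0]=[T]$ taken with multiplicity $D$.

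The forward implication is then immediate from this dictionary: if $\phi^\bft_T(\lambda)=\lambda^{D}$ then $g = c\,t^{D}$ with $c\neq 0$, so $\res(T)=g(1,0)=0$, i.e. $T\in\calRes$, and $L\cap\calRes$ is one point of multiplicity $D=\deg\calRes$ while $L\not\subseteq\calRes$; this is precisely the assertion that $L$ belongs to the set defining $\calH_\calRes^{(\deg\calRes)}=\calH_\calRes$ (indeed $L\in\calH_\calRes$ already before passing to the closure).

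For the converse, assume $T\in\calRes$ and $L\in\calH_\calRes$. Here I must account for the closure in the definition of $\calH_X$: I claim that, among lines not contained in $\calRes$, the property ``$L'\cap\calRes$ is supported at a single point'' (necessarily with multiplicity $\deg\calRes$, since the intersection has length $\deg\calRes$) is already Zariski closed. To see this, push forward the restriction of $\res$ along the universal line over the open set $U=\{L'\in\Gr(2,\bbC^{N+1}):L'\not\subseteq\calRes\}$ to obtain a nowhere-vanishing section of a rank-$(D+1)$ vector bundle, hence a morphism from $U$ to its projectivization, $L'\mapsto[\res|_{L'}]$; the locus of perfect $D$-th powers is the rational normal curve inside each fibre $\bbP^{D}$, a closed subbundle, and our property is its preimage. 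Since $\res(\bft)\neq 0$ places $L=\langle T,\bft\rangle$ in $U$, membership of $L$ in $\calH_\calRes$ forces $L\cap\calRes = D\cdot[p]$ for some $p\in L$. Finally $T\in\calRes$ gives $[T]\in L\cap\calRes$, so $p=[T]$; hence $g$ is a scalar multiple of $t^{D}$ and $\phi^\bft_T(\lambda)=\lambda^{D}$ by the dictionary above.

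The computations here are routine; the one genuinely delicate point is handling the closure in the definition of $\calH_\calRes$, i.e. checking that a line lying in $\calH_\calRes$ but not in $\calRes$ really does meet $\calRes$ in a single fat point of full multiplicity, rather than merely being a degeneration of such lines.
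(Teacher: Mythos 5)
Your proof is correct and follows the same basic dictionary as the paper's: parametrize $L=\langle T,\bft\rangle$, note $\phi^\bft_T$ is (up to scalar) the restriction $\res|_L$, and translate ``only root is $0$'' into ``$L\cap\calRes$ is $D(n,d)\cdot[T]$''. Where you go beyond the paper is in explicitly handling the closure in the definition of $\calH_\calRes$: the paper's proof ends with ``By definition, this is equivalent to the condition $L\in\calH_\calRes$'', which silently uses that a line $L$ with $L\not\subseteq\calRes$ lying in the closure $\calH_\calRes$ genuinely has the one-fat-point property, rather than merely being a limit of such lines. You flag this as the delicate point and supply a clean closedness argument over the open set $U$ of lines not contained in $\calRes$ (nowhere-vanishing section of the rank-$(D+1)$ bundle, preimage of the rational-normal-curve subbundle), after which $\calH_\calRes\cap U$ equals the un-closed set and the converse follows. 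This makes your version strictly more rigorous than the paper's while preserving the same underlying approach.
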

\begin{proof}
Parametrize the line $L = \langle T,\bft\rangle$ as $L_\lambda = T -\lambda \bft$; then $\phi^\bft_T(\lambda) = \res|_L$. By hypothesis the only root of the characteristic polynomial is $0$, which guarantees that the $D(n,d)$ intersection points of $\calRes$ and $L$ all coincide with $L_\lambda|_{\lambda = 0} = T$. By definition, this is equivalent to the condition $L \in \calH_\calRes$.
\end{proof}

\section{Characteristic polynomial of binary forms}\label{sec: forme binarie}

In this section we study \autoref{conj:fibre symmetric} in the case $n=1$. In \autoref{thm: forme binarie} we show that there are no non-zero polynomials such that all of their eigenvalues are equal to zero. In \autoref{thm: fibre 0 gen t} we deduce the finiteness of every fiber of $\Phi_{\bft,\sym}$ when $n=1$. 

\subsection{Resultant of binary tensors}

In the case of binary forms, the resultant polynomial has a determinantal formula, dating back to \cite{Sylv:TheorySyzRelations}. Given two nonnegative integers $d,e$, let $\mu : S^d W^* \otimes S^e W^* \to S^{d+e} W^*$ be the natural multiplication map, defined by $\mu(f \otimes g) = fg$ and extended linearly.

Assume that $\dim V = \dim W = 2$. Let $T \in V \otimes S^{d-1} W^*$ be a tensor and identify $T$ with the linear map $T : V^* \to S^{d-1}W^*$ that it defines. Let $\Sigma_T: S^{d-2} W^* \otimes V^* \to S^{2d-3} W^*$ be the composition 
\[
S^{d-2} W^* \otimes V^* \xto{\id_{S^{d-2} W^*}  \otimes T} S^{d-2} W^* \otimes S^{d-1}W^* \xto{\mu} S^{2d-3} W^*.
\]
Explicitly, if $T = v_0 \otimes f_0 + v_1 \otimes f_1$ for a basis $v_0,v_1$ of $V$ with dual basis $\alpha_0,\alpha_1$ of $V^*$, then 
\[
\Sigma_T( g_0 \otimes \alpha_0 + g_1 \otimes \alpha_1) = g_0 f_0 + g_1 f_1.
\]

Notice that $\dim (S^{d-2} W^* \otimes V^*) = \dim ( S^{2d-3} W^* ) = 2d-2 = D({1,d})$. The determinant of $\Sigma_T$ is a polynomial of degree $2d-2$ in the coefficients of $T$; since the map $\Sigma_T$ is equivariant for the action of $\GL(V) \times \GL(W)$, this polynomial is an invariant. Note that $\ker( \Sigma_T) \neq 0$ if and only if the ideal $(f_0,f_1)$ defining $\Sing(T)$ has syzygies in degree $d-1$: this happens exactly when $\Sing(T)\neq \emptyset$, that is $T \in \calRes$. We have therefore the next result; we refer to \cite[Section 12.1.A]{GKZ:DiscResMultDet} for details.

\begin{lemma}\label{coroll: degree determinant}
The determinant of $\Sigma_T$, as a polynomial in the coefficients of $T$, is the resultant polynomial $\res$. Its restriction to the symmetric subspace $S^d W^*$ is the discriminant polynomial $\disc$.
\end{lemma}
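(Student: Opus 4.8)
The plan is to exhibit $\det\Sigma_T$ as a nonzero polynomial of degree $D(1,d)$ in the coefficients of $T$ whose vanishing locus is exactly $\calRes$, and then to conclude by irreducibility of the resultant. Since $\Sigma_T=\mu\circ(\id_{S^{d-2}W^*}\otimes T)$ depends linearly on $T$ and is a linear map between spaces of equal dimension $\dim(S^{d-2}W^*\otimes V^*)=\dim S^{2d-3}W^*=2d-2=D(1,d)$, after fixing monomial bases its determinant is a polynomial of degree at most $D(1,d)$ in the entries of $T$. As observed right before the statement, $\ker\Sigma_T\neq 0$ if and only if $T\in\calRes$; because $\calRes$ is a proper closed subvariety, $\Sigma_T$ is invertible for generic $T$, so $\det\Sigma_T$ is not identically zero, while it vanishes along all of $\calRes$. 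Since $\calRes$ is an irreducible hypersurface, $\res$ is an irreducible polynomial and the ideal of polynomials vanishing on $\calRes$ is $(\res)$; hence $\res\mid\det\Sigma_T$, and comparing degrees (both equal to $D(1,d)$) forces $\det\Sigma_T=c\cdot\res$ for some $c\in\bbC^\times$, so in particular $\det\Sigma_T$ has degree exactly $D(1,d)$.

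It then remains to fix $c=1$ and to pass to the symmetric case. For the constant I would evaluate at the unit tensor $\bfu_2=v_0\otimes x_0^{d-1}+v_1\otimes x_1^{d-1}$: ordering the basis of $S^{d-2}W^*\otimes V^*$ by taking the degree-$(d-2)$ monomials tensored first with $\alpha_0$ and then with $\alpha_1$, and the monomial basis of $S^{2d-3}W^*$ by decreasing power of $x_0$, the matrix of $\Sigma_{\bfu_2}$ is the identity, so $\det\Sigma_{\bfu_2}=1=\res(\bfu_2)$ with the standard normalization of the resultant; thus $c=1$. For the second assertion I would simply restrict the polynomial identity $\det\Sigma_T=\res(T)$ to the subspace $S^dW^*\subseteq V\otimes S^{d-1}W^*$: the left-hand side becomes the determinant of $\Sigma_T$ restricted to $S^dW^*$, while the right-hand side becomes $\res|_{S^dW^*}$, which is $\disc$ by the very definition of the discriminant polynomial.

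I do not expect a genuine obstacle. The two facts that carry the argument — the equivalence $\ker\Sigma_T\neq 0\Leftrightarrow T\in\calRes$ and the degree of $\det\Sigma_T$ — are already in place, and irreducibility of the resultant is classical, see \cite[Section 12.1.A]{GKZ:DiscResMultDet}. The one mildly delicate point is pinning down the normalization constant; if the explicit identification $\Sigma_{\bfu_2}=\id$ is considered too terse, one can instead invoke the standard fact that, in the chosen bases, $\Sigma_T$ is precisely the Sylvester matrix of the pair $(f_0,f_1)$, whose determinant is by definition the Sylvester resultant, normalized so that $\res(\bfu_2)=1$.
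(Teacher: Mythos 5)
Your argument is correct and is essentially the standard Sylvester-resultant argument that the paper points to via \cite[Section 12.1.A]{GKZ:DiscResMultDet}: the matrix of $\Sigma_T$ is the Sylvester matrix of $(f_0,f_1)$, its determinant is a degree-$D(1,d)$ polynomial vanishing exactly on the irreducible hypersurface $\calRes$, so it agrees with $\res$ up to scalar by comparing degrees. One remark: the explicit normalization $c=1$ is unnecessary here, since the paper defines $\res$ only as \emph{a} defining polynomial of $\calRes$ (hence determined only up to a nonzero scalar), and likewise defines $\disc$ as the restriction $\res|_{S^dW^*}$, so the second claim is a tautology once the first is established.
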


\subsection{Tangent cones of the discriminant}
The proof of \autoref{thm: forme binarie} relies on a structural result on the tangent cones to the hypersurface $\calDisc$, which is already contained in \cite{Hilb:UberSingDisc}. 

We introduce the following notation. Given $w \in \bbP W$, let 
\[
H_w = \{ f \in S^d W^* \mid f(w) = 0\} = \calI_d(w)
\]
be  the hyperplane in $S^d W^*$ of forms vanishing at $w$. If $w = \{ \ell = 0\}$, then $H_w = \ell \cdot S^{d-1} W^*$. Given a hypersurface $Y\subseteq \bbP^N$ defined by a homogeneous polynomial $G$, denote by $Y^{\langle m \rangle}$ the scheme of pure codimension $1$ and equation $G^m$. 

\begin{lemma}\label{lemma: tangent cone to disc}
Let $\calDisc \subseteq \bbP ( S^{d} W^*)$ be the discriminant hypersurface of binary forms of degree $d$. Let $f \in S^d W^*$ and write $f = \prod_1^s \ell_i ^{e_i}$ for distinct linear forms $\ell_i$ and exponents $e_i$ with $d = e_1 + \cdots + e_s$ and $e_1 \geq \cdots \geq e_s > 0$. Then
    \begin{enumerate}[label = (\roman*), leftmargin= *]
        \item $f \in \calDisc$ if and only if $e_1 \geq 2$, or equivalently $s <d$;
        \item $f$ is a smooth point of $\calDisc$ if and only if $s = d-1$, $e_1 = 2$ and $e_j = 1$ for $j = 2 \vvirg d-1$;
        \item the multiplicity of $f$ in $\calDisc$ is $\sum_1^s (e_j-1)$;
        \item the tangent cone $TC_f \calDisc$ is the scheme-theoretic union $H_{w_1}^{\langle e_1-1\rangle} \cup \cdots \cup H_{w_s}^{\langle e_s-1 \rangle}$, where $w_j = \{ \ell_j = 0\}$.
    \end{enumerate}
\end{lemma}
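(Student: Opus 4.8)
The core of the statement is (iii)--(iv); the plan is to prove these by a direct perturbative computation of $\disc$ near $f$, and then to read off (i) and (ii) formally. Indeed $f \in \calDisc$ if and only if $\mult_{\calDisc}(f) \ge 1$, which by (iii) means $\sum_j(e_j - 1) \ge 1$, i.e. $e_1 \ge 2$, i.e. $s < d$; and $f$ is a smooth point of $\calDisc$ if and only if $\mult_{\calDisc}(f) = 1$, which by (iii) forces $e_1 = 2$ and $e_2 = \dots = e_s = 1$, hence $s = d - 1$. (Part (i) can also be seen directly, since $\calDisc$ is the locus of binary forms with a repeated root; part (ii) also follows from the $\calDisc$-analogue of \autoref{smooth locus of Res, new}, because $\Sing(f) = Z(f_x, f_y)$ is a single reduced point exactly when $e_1 = 2$ and $e_j = 1$ for $j \ge 2$.)

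To set up, note that multiplicity and tangent cone do not change if $\disc$ is rescaled by a nonzero constant; so, using \autoref{coroll: degree determinant}, I would replace $\disc$ by the classical discriminant of binary forms. After choosing coordinates $x, y$ on $W$ with $y \nmid f$, so that $g \mapsto g(x,1)$ identifies a neighbourhood of $f$ in $S^d W^*$ with degree-$d$ polynomials of nonzero leading coefficient $a_d(g)$, one has, up to a fixed nonzero constant, $\disc(g) = a_d(g)^{2d-2}\prod_{k<l}(\beta_k - \beta_l)^2$ where $\beta_1 \vvirg \beta_d$ are the roots of $g(x,1)$; see \cite[Chapter 12]{GKZ:DiscResMultDet}. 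Write $\ell_i = x - \alpha_i y$, so that $w_i = \{\ell_i = 0\}$ corresponds to $\alpha_i$ and $\alpha_1 \vvirg \alpha_s$ are distinct.

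The perturbation I would use is the curve $\gamma(\epsilon) = \prod_{i=1}^s(\ell_i^{e_i} + \epsilon u_i)$ in $S^d W^*$, for $u_i \in S^{e_i} W^*$; it passes through $f$ at $\epsilon = 0$ with velocity $h = \sum_{i=1}^s u_i \prod_{j \ne i}\ell_j^{e_j}$. A short induction on $s$ --- peeling off the factor $\ell_s^{e_s}$ common to all $\prod_{j \ne i}\ell_j^{e_j}$ with $i < s$, and using that two coprime subspaces of $S^d W^*$ of this type meet only in $\langle f \rangle$ --- shows $\sum_{i=1}^s \bigl(\prod_{j \ne i}\ell_j^{e_j}\bigr) S^{e_i} W^* = S^d W^*$; hence every $h$ is realised, and I may take $h$ in the dense open set where $h(w_i) \ne 0$ for all $i$, noting $h(w_i) = u_i(w_i)\prod_{j \ne i}\ell_j(w_i)^{e_j}$ with nonzero product. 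By the multiplicativity $\disc(PQ) = \disc(P)\disc(Q)\Res(P,Q)^2$,
\[
\disc(\gamma(\epsilon)) = \Bigl(\textstyle\prod_{i<j}\Res(\ell_i^{e_i} + \epsilon u_i,\ \ell_j^{e_j} + \epsilon u_j)^2\Bigr)\cdot\prod_{i=1}^s \disc(\ell_i^{e_i} + \epsilon u_i),
\]
and the resultant factors tend to $\prod_{i<j}\Res(\ell_i,\ell_j)^{2e_ie_j} \ne 0$. For a single factor, writing $\ell_i^{e_i} + \epsilon u_i$ in monic form $u^{e_i} + c_1 u^{e_i - 1} + \dots + c_{e_i}$ with all $c_k = O(\epsilon)$ and $c_{e_i}$ of $\epsilon$-order exactly $1$ with leading coefficient a nonzero multiple of $u_i(w_i)$, and using that the discriminant of $u^{e_i} + c_1 u^{e_i-1} + \dots + c_{e_i}$ is isobaric of weight $e_i(e_i - 1)$ with $c_k$ of weight $k$ --- so its unique monomial of minimal degree $e_i - 1$ is $c_{e_i}^{e_i-1}$ --- one obtains $\ord_{\epsilon = 0}\disc(\ell_i^{e_i} + \epsilon u_i) = e_i - 1$ with $\epsilon^{e_i-1}$-coefficient $\kappa_i\, u_i(w_i)^{e_i-1}$, $\kappa_i \ne 0$. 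Multiplying, $\ord_{\epsilon=0}\disc(\gamma(\epsilon)) = \sum_{i=1}^s(e_i - 1) =: m$, with $\epsilon^m$-coefficient $C'\prod_{i=1}^s h(w_i)^{e_i - 1}$, $C' \ne 0$.

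To conclude, let $m_0 = \mult_{\calDisc}(f)$ and let $P$ be the (nonzero) degree-$m_0$ part of $\disc$ expanded at $f$, so $TC_f \calDisc = \{P = 0\}$; for any analytic arc through $f$ with velocity $h$ one has $\ord_{\epsilon = 0}\disc(\gamma(\epsilon)) \ge m_0$ and $[\epsilon^{m_0}]\disc(\gamma(\epsilon)) = P(h)$. If $m_0 < m$, the computation above gives $P(h) = 0$ on a dense set, contradicting $P \not\equiv 0$; if $m_0 > m$, one of those arcs has $\ord_{\epsilon = 0}\disc(\gamma) = m < m_0$, again a contradiction. So $m_0 = m$, which is (iii), and then $P(h) = C'\prod_i h(w_i)^{e_i - 1}$ on a dense set, hence as polynomials. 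Since $h \mapsto h(w_i)$ is the linear form cutting out $H_{w_i} = \calI_d(w_i)$, the scheme $\{P = 0\}$ is $H_{w_1}^{\langle e_1 - 1\rangle} \cup \dots \cup H_{w_s}^{\langle e_s - 1\rangle}$, which is (iv). The main obstacle is the single-cluster estimate $\ord_{\epsilon = 0}\disc(\ell_i^{e_i} + \epsilon u_i) = e_i - 1$ and the exact form of its leading coefficient: this is where the fine geometry of the discriminant of binary forms enters --- essentially the content of \cite{Hilb:UberSingDisc} --- requiring a careful Newton-polygon analysis of how the $e_i$-fold zero splits under a generic perturbation.
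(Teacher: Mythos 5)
Your proof is correct, and it takes a genuinely different route from the paper's. The paper first proves (iv) for an $f$ with a single multiple root of order $e$ and the remaining roots simple and generic, by identifying $\bbP S^d W^*$ with $\Sym^d\bbP W$ and using the implicit function theorem to track how the $e$-fold root splits to first order along $f + \eps g$ (the $e$-th roots of unity appear explicitly); it then passes to arbitrary $f = \prod\ell_i^{e_i}$ by semicontinuity, which only yields the inclusion $\bigcup_j H_{w_j}^{\langle e_j-1\rangle}\subseteq TC_f\calDisc$, and separately computes $\mult_\calDisc(f)$ via \cite[Theorem 2.72]{Kol:LecturesResSingularities} and a desingularization analogous to the one in \autoref{smooth locus of Res, new}, which forces the inclusion to be an equality by degree count. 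Your argument instead works directly at an arbitrary $f$: the multiplicativity $\disc(PQ)=\disc(P)\disc(Q)\Res(P,Q)^2$ localizes the computation at each root cluster, and the isobaricity of the discriminant of a monic polynomial singles out $c_{e_i}^{e_i-1}$ as the unique minimal-degree monomial, pinning down both the $\eps$-order $e_i-1$ and the leading coefficient at each cluster at once. The extra ingredient is the $s$-parameter arc $\gamma(\eps)=\prod_i(\ell_i^{e_i}+\eps u_i)$ together with the surjectivity $\sum_i \bigl(\prod_{j\ne i}\ell_j^{e_j}\bigr) S^{e_i}W^* = S^d W^*$, which realizes every tangent direction; your inductive proof of that surjectivity (peel off $\ell_s^{e_s}$, note the two resulting subspaces meet only in $\langle f\rangle$, count dimensions) is correct. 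The payoff is a more self-contained proof that delivers (iii) and (iv) simultaneously and bypasses both Koll\'{a}r's resolution result and the incidence-variety desingularization; the paper's route in exchange makes the geometric root-splitting picture explicit. One small note: your closing sentence says the single-cluster estimate still ``requires a careful Newton-polygon analysis,'' but the isobaricity observation you gave two sentences earlier already is that analysis and is complete as written.
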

\begin{proof}
Statement (i) is clear. The other statements follow immediately from statement (iv), therefore we only prove the latter.

    First, we prove statement (iv) in the case where $f = \ell_1^e \ell_{e+1} \cdots \ell_d $ for generic linear forms $\ell_1,\ell_{e+1} \vvirg \ell_d \in S^{d-e}W^*$ and then conclude by a semicontinuity argument following \cite[Theorem 2.72]{Kol:LecturesResSingularities}.
    
  There is a natural isomorphism between $\bbP S^d W^*$ and $\Sym^d\bbP W := (\bbP W)^{d} / \frakS_d$, the symmetrized product of $d$ copies of $\bbP^1$, obtained by identifying an element of $\bbP S^d W^*$ with the multiset of its roots on $\bbP W$. Let $\Delta_k \subseteq \Sym^d\bbP^1$ be the \emph{$k$-th coincident root locus}; in particular $\Delta_1 = \Sym^d\bbP W$, the discriminant defines the hypersurface $\Delta_2$ and $f = x_0^e f'$ is, up to the action of $\GL(W)$, a generic element of $\Delta_e$.  

Let $g \in S^d W^*$ be generic. Via the isomorphism, the line $\{f + \eps g: \eps \in \bbC\} \subseteq \bbP S^d W^*$ is mapped to a curve $p_\eps = \{ w_1(\eps) \vvirg  w_d(\eps) \} \in \Sym^d \bbP W $ where $w_j(\eps)$ are (locally) regular functions of $\eps$ depending on $f,g$; they are the roots of $f + \eps g$ and they are only defined as a point of $ \Sym^d \bbP W$. 

It is clear that $p_0$ is the $d$-tuple of the roots of $f$, namely $\{\overbrace{ w_1  \vvirg w_1}^e, w_{e+1} \vvirg w_d\}$. Set $\eta = \exp(2\pi i/e)$ to be a primitive $e$-th root of $1$; using the implicit function theorem on the identities $f(w_j(\eps)) + \eps g (w_j(\eps)) = 0$, it is not hard to see {\Small 
$$
\frac{d}{d\eps}|_{\eps = 0} p_\eps = \left\{  \left[ \biggl(\frac{g (w_1)}{\ell_{e+1}(w_1) \cdots \ell_d(w_1)}\biggr)^{1/e} \eta^j  \right]_{j = 1 \vvirg e}, \left[ \frac{g (w_i) }{\ell_1(w_i)^e\ell_{e+1}(w_i)  \cdots \hat{\ell_i(w_i)} \cdots \ell_d(w_i)} \right]_{i = e+1 \vvirg d}  \right\} .
$$}

After a localization, assume $w_i(\eps) = (\alpha_i(\eps),1)$ and $\ell_1 = x_0$, so that $w_1 = (0,1)$; the display above describes the $d$-tuple $\{ \alpha_i'(0) : i =1 \vvirg d \}$. Then, up to scaling,
\begin{align*}
\disc ( f + \eps g) &= \prod_{i \neq j} (\alpha_i(\eps) - \alpha_j(\eps)) = \\
 &= \left[ \biggl(\frac{g (w_1)}{\ell_{e+1}(w_1) \cdots \ell_d(w_1)}\biggr)\right]^{e-1} \eps^{e-1} + O(\eps).
\end{align*}
This shows that $\disc$ vanishes with multiplicity $e-1$ on a generic line through $f$ and it vanishes with higher multiplicity if $g(w_1)^{e-1} = 0$; therefore $TC_{f} \calDisc = H_{w_1}^{\langle e-1 \rangle}$, as desired. Now, if $f = \ell_1^{e_1} \cdots \ell_s^{e_s} \in \bbP S^d W^*$, by semicontinuity we have $H_{w_j}^{\langle e_j-1 \rangle} \subseteq TC_{f}\calDisc$. 

Moreover, using \cite[Theorem 2.72]{Kol:LecturesResSingularities}, and a desingularization analogous to the one of \autoref{smooth locus of Res, new}, we deduce $\mult_{\calDisc}(f) = \sum_1^s (e_j-1) = \sum_1^s (\deg H_{w_j}^{\langle e_j-1 \rangle})$. This concludes the proof.
\end{proof}

\subsection{Eigenvalues of binary forms}
In this section, we prove that $\calH_\calDisc = \emptyset$ in the case of binary forms, see \autoref{thm: forme binarie}. As a consequence, we obtain that the map $\Phi_{\bft,\sym}$ is well-defined for every choice of $\bft \in S^d W^*$ with $\disc(\bft) \neq 0$. This yields the proof of \autoref{thm: main} in the case of binary forms. 

The proof of \autoref{thm: forme binarie} is by induction on $d$. We record separately the base case of the induction.
\begin{lemma}\label{lemma: bincubics}
    If $d = 3$ and $n = 1$, then $\calH_\calDisc = \emptyset$. 
\end{lemma}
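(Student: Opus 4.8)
For $d=3$ and $n=1$ we have $D(1,3) = 2\cdot 2^1 = 4$, so $\calDisc \subseteq \bbP S^3 W^* \cong \bbP^3$ is the discriminant hypersurface of binary cubics, which is a quartic surface in $\bbP^3$. By definition $\calH_\calDisc = \calH^{(4)}_\calDisc$ is empty precisely when no line $L \not\subseteq \calDisc$ meets $\calDisc$ in a single point with multiplicity $4$. The plan is to use \autoref{lemma: tangent cone to disc} to control the multiplicity of $\calDisc$ at any point and the shape of its tangent cone, and then to argue that no line can concentrate all four intersection points at one spot.

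**Main argument.** Suppose for contradiction that $L$ is a line with $L \cap \calDisc = \{f\}$ scheme-theoretically of length $4$, where $f \in S^3 W^*$. Write $f = \prod_{i=1}^{s} \ell_i^{e_i}$ with $e_1 \geq \cdots \geq e_s > 0$ and $\sum e_i = 3$. The possible partitions of $3$ are $(3)$, $(2,1)$, and $(1,1,1)$; the last one is excluded since then $f \notin \calDisc$ by part (i) of \autoref{lemma: tangent cone to disc}. In the case $(2,1)$, part (iii) gives $\mult_f \calDisc = (2-1)+(1-1) = 1$, so $f$ is a smooth point and a line through it meets $\calDisc$ with multiplicity at most $2$ there (equality iff $L$ is tangent, i.e. lies in the tangent plane $TC_f\calDisc = H_{w_1}$); this is strictly less than $4$, contradiction. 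So the only possibility is $f = \ell^3$ for a single linear form $\ell$, i.e. $f$ lies on the twisted cubic curve of perfect cubes. Here part (iii) gives $\mult_f\calDisc = 2$ and part (iv) gives $TC_f\calDisc = H_w^{\langle 2\rangle}$, the double hyperplane $\{g : g(w)=0\}$ with multiplicity two, where $w = \{\ell = 0\}$. Now, for $L$ to meet $\calDisc$ at $f$ with multiplicity $4$ we would need $L \subseteq TC_f\calDisc$ (since a line meets a surface of multiplicity $m$ at $f$ with multiplicity $\geq m+1$ only if it lies in the tangent cone), and in fact we need the intersection multiplicity to exceed $3$. A line in the double hyperplane $H_w^{\langle 2 \rangle}$ through $f$ — that is, a line $L \subseteq H_w$ in the hyperplane $H_w \cong \bbP^2$ — meets $\calDisc$ at $f$ with multiplicity exactly $2\cdot(\text{intersection multiplicity of }L\text{ with }\calDisc\cap H_w\text{ at }f)$ once we account for the double structure; but more directly, I would restrict $\disc|_L$ explicitly. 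The cleanest route: parametrize a general line through $f = \ell^3$ inside $H_w$ and compute the order of vanishing of $\disc$ along it directly, using the formula from the proof of \autoref{lemma: tangent cone to disc}, to see it is exactly $3$ (not $4$) for the generic such line, and check the finitely many special directions separately.

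**The concrete computation.** Concretely, fix coordinates so that $W^*$ has basis $x_0, x_1$ and take $\ell = x_0$, so $f = x_0^3$ and $w = (0:1) \in \bbP W$, $H_w = \{a_0 x_0^3 + a_1 x_0^2 x_1 + a_2 x_0 x_1^2 : a_i \in \bbC\}$ (forms vanishing at $w$ are exactly those divisible by $x_0$). A line through $f$ in $H_w$ is $f + \eps g$ with $g = b_1 x_0^2 x_1 + b_2 x_0 x_1^2 \in H_w$ (we may drop the $x_0^3$ part of $g$ as it only reparametrizes $L$ and stays on the twisted cubic direction... actually $g$ ranges over all of $H_w$ modulo $f$, so $g = b_1 x_0^2 x_1 + b_2 x_0 x_1^2 + b_0 x_0^3$; the $b_0$ term contributes trivially). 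Then $f + \eps g = x_0(x_0^2 + \eps b_1 x_0 x_1 + \eps b_2 x_1^2 + \eps b_0 x_0^2)$, and I would compute $\disc(f+\eps g)$ as the resultant of $f+\eps g$ with its derivative, or directly as $\prod_{i<j}(\alpha_i - \alpha_j)^2$ over the three roots. One root is forced to be near $x_0 = 0$ only when... rather, one root stays at $(1:0)$-direction? Let me restate: $f+\eps g = x_0 \cdot q_\eps$ where $q_\eps = x_0^2(1+\eps b_0) + \eps b_1 x_0 x_1 + \eps b_2 x_1^2$. The three roots are $(0:1)$ — wait, $x_0$ as a factor gives the root $(0:1)$ — together with the two roots of $q_\eps$. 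For small $\eps$, $q_\eps$ has roots that are $O(\sqrt\eps)$ away from $(0:1)$ (since $q_0 = x_0^2$). So all three roots of $f + \eps g$ collide at $(0:1)$ as $\eps \to 0$, with the discriminant vanishing to order... two roots at distance $\sim\sqrt{\eps}$ from the third and from each other: the pairwise differences are $\sim \sqrt\eps, \sqrt\eps, \sqrt\eps$, so $\disc \sim (\sqrt\eps)^2 \cdot (\sqrt\eps)^2\cdot(\sqrt\eps)^2 \cdot(\text{wait})$ — $\disc = \prod_{i<j}(\alpha_i-\alpha_j)^2$, three factors, each $\sim \eps$, so $\disc \sim \eps^3$ generically. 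Thus $\disc|_L$ vanishes to order exactly $3$ at $f$ for a generic line through $f$ in $H_w$, giving intersection multiplicity $3 < 4$. For the finitely many special directions where the order might jump (e.g. $b_2 = 0$, making $q_\eps$ reducible with a root staying bounded, or other degenerations), I would check each: when $b_2 = 0$, $q_\eps = x_0(x_0(1+\eps b_0) + \eps b_1 x_1)$ so $f + \eps g = x_0^2(x_0(1+\eps b_0) + \eps b_1 x_1)$, which is again a point with a triple structure on $\calDisc$ — actually this is now a point of the form $\ell_1^2 \ell_2$ moving, and one checks the line is tangent to a smooth branch, giving lower multiplicity, or the line lies in $\calDisc$ (if $b_1 = 0$ too, $f + \eps g = x_0^3(1+\eps b_0)$ stays on the twisted cubic $\subseteq \calDisc$, so $L \subseteq \calDisc$, excluded). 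In every admissible case the intersection multiplicity at $f$ is at most $3$, so $\calH_\calDisc = \emptyset$.

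**Main obstacle.** The main subtlety is the careful bookkeeping in the triple-collision case $f = \ell^3$: one must verify that no special line direction through $f$ — lying in the tangent cone — achieves intersection multiplicity $4$, which requires handling the degenerate sub-cases where the line is tangent to a branch of $\calDisc$ at $f$ or where roots fail to separate at the expected rate. The Puiseux/implicit-function analysis in the proof of \autoref{lemma: tangent cone to disc} already supplies the leading-order expansion of the roots, so the computation is short, but one must be attentive that the generic-line estimate "$\disc \sim \eps^{e-1}$" used there is for a \emph{generic} $g$ in all of $S^3W^*$, whereas here $g$ is constrained to $H_w$, and I need the sharper statement that even within $H_w$ the order is exactly $3$ for generic $g \in H_w$ and no larger for any $g \in H_w$ with $L \not\subseteq \calDisc$. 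Alternatively — and perhaps more cleanly — one can invoke that $\calDisc$ for binary cubics is the classical quartic surface projectively equivalent to the tangent developable of the twisted cubic, whose flex lines and high-order osculating lines are completely classical, and read off that no line meets it in a single quadruple point; but I would prefer the self-contained tangent-cone argument above.
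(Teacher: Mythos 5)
Your treatment of the $f=\ell^3$ orbit is essentially sound and matches the computation in the paper (the leading coefficient of $\varepsilon^3$ in $\disc(f+\varepsilon g)$ is a nonzero multiple of $b_2^3$, and $b_2 = 0$ forces $L \subseteq \calDisc$). But your dismissal of the $(2,1)$ orbit, $f = \ell_1^2\ell_2$, contains a genuine error.

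You assert that since $f$ is a smooth point of $\calDisc$, ``a line through it meets $\calDisc$ with multiplicity at most $2$ there (equality iff $L$ is tangent).'' This is false: at a smooth point of a hypersurface, a general line has intersection multiplicity $1$ and a line inside the tangent hyperplane has intersection multiplicity $\geq 2$, but there is no upper bound of $2$. Lines in asymptotic (or worse) directions can meet a quartic surface with multiplicity $3$ or $4$ at a smooth point — think of a flex or hyperflex tangent. In fact, setting $f = x_0^2 x_1$ and $g = a_0 x_0^3 + 3a_1 x_0^2 x_1 + 3a_2 x_0 x_1^2 + a_3 x_1^3$ with $a_3 = 0$ (tangency), one computes
\[
\disc(f + tg) = t^2\Bigl(\disc(g)\, t^2 + 2a_1 a_2^2\, t - \tfrac{1}{3}a_2^2\Bigr),
\]
so the order of vanishing at $t=0$ is exactly $2$ only when $a_2 \neq 0$; if $a_2 = 0$ the order jumps to $4$, but then $g = x_0^2(a_0 x_0 + 3a_1 x_1) \in \calDisc$, which forces $L \subseteq \calDisc$ and gives the required contradiction. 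This is precisely the case that the paper's proof handles by explicit computation, and your argument skips it by invoking an intersection-multiplicity bound that does not hold. To repair the proof, you need to carry out the same kind of explicit restriction $\disc|_L$ for the $(2,1)$ case as you did for the $(3)$ case, rather than discarding it on the grounds of smoothness.
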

\begin{proof}
Let $f \in \calDisc \subseteq \bbP S^3 W^*$ and let $g \in \bbP S^3 W^*$. Define $L = \langle f, g\rangle$ and suppose $L \not\subseteq \calDisc$; without loss of generality, assume that $g \notin \calDisc$, and $\disc(g) = 1$. By \autoref{coroll: degree determinant}, $\deg(\calDisc) = 4$. We are going to show that the intersection multiplicity of $L$ and $\calDisc$ at $f$ is strictly smaller than $4$: more precisely, parametrizing $L$ as $f_t = f + tg$, we are going to show $\disc(f_t) \neq t^4$. 

It is a classical fact \cite{Sylv:PrinciplesCalculusForms} that the space $\bbP S^3 W^*$ has only three orbits for the action of $\GL(W)$ and the hypersurface $\calDisc$ is union of two of them: hence $f$ can be normalized to be $f = x_0^3$ or $f = x_0^2x_1$. Write $g=a_0x_0^3+3 a_1 x_0^2x_1+3 a_2 x_0x_1^2+ a_3 x_1^3$ for certain coefficients $a_0 \vvirg a_3$. If $g \notin TC_f \calDisc$ then the intersection multiplicity equals $\mult_\calDisc(f)$: this is $2$ if $f= x_0^3$ and $1$ if $f = x_0^2x_1$. Hence we may assume $g \in TC_f \calDisc$, which by \autoref{lemma: tangent cone to disc} is equivalent to $a_3 = 0$.

If $f=x_0^3$, then $\disc (f_t) = t^3 (\disc(g) t - 4 a_2^3)$. Hence $\disc(f_t) = t^4$ if an only if $a_2 = 0$, which implies $g = x_0^2 \ell$ for some $\ell \in W^*$, hence $g \in \calDisc$ which is a contradiction.

If $f=x_0^2x_1$, then $\disc(f_t)=t^2(\disc(g) t^2 + 2 a_1a_2^2 t - \frac{1}{3} a_2^2)$. Therefore if $\disc(f_t) = t^4$, necessarily $a_2 = 0$, which again implies $g = x_0^2 \ell$ for some $\ell \in W^*$.  Again, $g\in \calDisc$, and we obtain a contradiction.
    \end{proof}

\begin{theorem}\label{thm: forme binarie}
If $n =1$ and $d \geq 3$, then $\calH_\calDisc=\emptyset$.
\end{theorem}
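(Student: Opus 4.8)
The plan is to prove $\calH_\calDisc = \emptyset$ by induction on $d$, with \autoref{lemma: bincubics} as the base case $d = 3$. So assume $d \geq 4$ and that the statement holds for binary forms of all degrees between $3$ and $d-1$. Suppose for contradiction that there is a line $L \subseteq \bbP S^d W^*$, not contained in $\calDisc$, meeting $\calDisc$ in a single point $f$ with multiplicity $\deg \calDisc = 2d-2$ (the degree is $D(1,d) = 2d-2$ by \autoref{coroll: degree determinant}). Parametrize $L$ as $f_t = f + tg$ with $g \notin \calDisc$, so that $\disc(f_t)$ is a polynomial of degree $2d-2$ in $t$ whose only root is $t = 0$, i.e. $\disc(f_t) = c\,t^{2d-2}$ for some constant $c \neq 0$.

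First I would use \autoref{lemma: tangent cone to disc} to control $f$ and $g$. Write $f = \prod_{i=1}^s \ell_i^{e_i}$ with $e_1 \geq \cdots \geq e_s > 0$ and $\sum e_i = d$. The multiplicity of $\calDisc$ at $f$ is $m := \sum_{i=1}^s (e_i - 1) = d - s$, and the intersection multiplicity of $L$ with $\calDisc$ at $f$ equals $m$ unless $g$ lies in the tangent cone $TC_f\calDisc = \bigcup_i H_{w_i}^{\langle e_i - 1\rangle}$; since we need intersection multiplicity $2d-2 > m$ (as $s \geq 1$ forces $m \leq d-1 < 2d-2$), we must have $g \in TC_f\calDisc$, hence $g(w_i) = 0$ for every $i$ with $e_i \geq 2$. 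In particular, if $w$ is a root of $f$ of multiplicity $e \geq 2$, then $w$ is a root of $g$; so $w$ is a root of $f_t = f + tg$ of multiplicity at least $2$ for every $t$ — wait, that is not automatic, since $g$ could vanish at $w$ only to order $1$ while $f$ vanishes to order $e$. The point is subtler: near $w$, $f_t$ factors and one tracks how the cluster of $e$ roots of $f$ at $w$ splits. The computation in the proof of \autoref{lemma: tangent cone to disc} (the implicit-function-theorem expansion of the roots $w_j(\eps)$) shows precisely that the $e$ roots colliding at $w$ spread out, to first order in $t^{1/e}$, as $w + t^{1/e}\beta\eta^j$ for $j = 1,\dots,e$, where $\beta$ depends on $g(w)$ and the other factors, and $\eta = e^{2\pi i/e}$; if instead $g(w) = 0$ the splitting is governed by a higher-order term and a root of multiplicity $\geq 2$ persists.

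This leads to the key dichotomy, which is where I expect the main work to lie. Either (a) for every multiple root $w$ of $f$ we have $g(w) = 0$ but the splitting is still "generic" of order $e$, so that the factor of $\disc(f_t)$ coming from this cluster contributes exactly $t^{e-1}$ and the total order in $t$ is $\sum(e_i - 1) = d - s < 2d-2$, giving a contradiction directly; or (b) at some multiple root the cluster does *not* fully split, meaning $f_t$ itself is divisible by a square for all $t$ in a neighborhood. In case (b), write $f_t = h(t)^2 \cdot r_t$ where $h(t) \in S^k W^*$ with $k \geq 1$: then $\disc(f_t)$ factors (by the classical multiplicativity of the discriminant under products, $\disc(pq) = \disc(p)\disc(q)\Res(p,q)^2$) into lower-degree discriminants and a resultant, and since $L$ restricted to $\calDisc$ is supported only at $t=0$, each factor must be a power of $t$. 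But $r_t = f_t / h(t)^2$ is itself a line (or a constant family) in a space of binary forms of degree $2d - 4 - (\text{something}) < d$, hitting its discriminant only at $t = 0$ — contradicting the inductive hypothesis — unless the degree drops so far that $r_t$ has degree $\leq 2$, in which case $\disc(r_t)$ is constant and one checks directly that the remaining resultant factors cannot conspire to make $\disc(f_t)$ a pure power of $t$ with nonzero leading coefficient (the resultant $\Res(h(t), r_t)$ vanishes at the $t$-values where a root of $h(t)$ meets a root of $r_t$, and these are generically not all at $t=0$). The genuinely delicate part is bookkeeping the orders of vanishing in case (b) and verifying that the low-degree base cases of the recursion — which bottom out at degree $3$ via \autoref{lemma: bincubics}, and at degrees $1,2$ trivially — actually close the argument; I would organize this by inducting on $d$ and, within fixed $d$, on the partition $(e_1,\dots,e_s)$ ordered by refinement, so that splitting off a square strictly decreases a well-founded measure.
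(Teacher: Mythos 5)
Your overall strategy --- induction on $d$ with \autoref{lemma: bincubics} as the base case, and reduction via $L \subseteq TC_f\calDisc$ using \autoref{lemma: tangent cone to disc} --- is the same as the paper's, but the inductive step goes astray at two points. First, $L \subseteq TC_f\calDisc = \bigcup_i H_{w_i}^{\langle e_i-1\rangle}$ forces, by irreducibility of $L$, that $L$ is contained in a \emph{single} hyperplane $H_{w_j}$ for one index $j$ with $e_j \geq 2$; it does not give $g(w_i)=0$ for \emph{every} such $i$, since the tangent cone is a union of hyperplanes, not an intersection. Second, your dichotomy (a)/(b) is not a genuine dichotomy. In (a) you posit $g(w)=0$ yet a generic order-$e$ splitting contributing exactly $t^{e-1}$, but vanishing of $g$ at $w$ is precisely what kills the generic leading Puiseux coefficient, so (a) is self-contradictory as stated. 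In (b), a varying square factor $h(t)^2\mid f_t$ for all small $t$ would put $f_t\in\calDisc$ for $t\neq 0$, contradicting the hypothesis that $L$ meets $\calDisc$ only at $f$; so (b) is vacuous.

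The clean reduction, which your approach circles but never lands on, is this: from $L\subseteq H_{w_j}$ it follows that the single \emph{fixed} linear form $\ell_j$ (vanishing at $w_j$) divides both $f$ and $g$, hence divides $f_t = f+tg$ for all $t$. Writing $f+tg = \ell_j(f'+tg')$ with $f',g'\in S^{d-1}W^*$, the forms $f'+tg'$ are square-free for every $t\neq 0$ (because $f_t$ is), so the line $t\mapsto f'+tg'$ in $\bbP S^{d-1}W^*$ meets the degree-$(d-1)$ discriminant only at $t=0$, which the inductive hypothesis forbids. Your instinct to invoke multiplicativity of the discriminant and reduce degree is the right one, but the factor to extract is a fixed linear form rather than a $t$-dependent square, and no Puiseux-series bookkeeping is needed once this is observed.
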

\begin{proof}
    Let $f \in \calDisc$. We show that there are no elements $L \in \calH_\calDisc$ such that $f \in L$. Let $L \in \Gr(2 , S^d W^*)$ be a line through $f$. Suppose by contradiction that $L \in \calH_{\calDisc}$; by assumption $L$ intersects $\calDisc$ only at $f$. Fix $g \in L$, $g \neq f$, and consider the parameterization of $L$ given by $f + t g$; since $L$ intersects $\calDisc$ only at $f$, the binary form $f  + tg$ has distinct factors for every $t \neq 0$. We proceed by induction on $d$ to show that the lines $L$ does not exists. If $d = 3$, \autoref{lemma: bincubics} shows that such a line does not exist. 
    
    Let $d \geq 4$. If $L \not\subseteq TC_f\calDisc$, then the intersection multiplicity of $\calDisc$ and $L$ at $f$ is the multiplicity of $f$ in $\calDisc$; by \autoref{lemma: tangent cone to disc}(iv), this is bounded above by $d-1$, hence it is not $D(1,d) = 2d-2$. Hence, assume $L \subseteq TC_f \calDisc$. By \autoref{lemma: tangent cone to disc}(iii), there is a factor $\ell$ of $f$ which is also a factor of $g$: write $f + t g = \ell\cdot (f' + t g')$. 

Since the factors of $f' + t g'$ are factors of $f + tg$, the form $f' + t g' \in S^{d-1} W^*$ must have distinct factors for $t \neq 0$. This defines a line $L'$ in $\bbP S^{d-1} W^*$ which intersects the discriminant of the forms of degree $d-1$ in exactly one point $f'$. The induction hypothesis guarantees such a line does not exist. This provides a contradiction and concludes the proof, proving that the line $L \in \calH_\calDisc$ cannot exist.
\end{proof}

The main result of this section shows that the map $\Phi_{\bft,\sym}$ has finite fibers, which in turn yields the proof of \autoref{thm: main}.

\begin{theorem}\label{thm: fibre 0 gen t}
Let $\dim W=2$ and $\bft\in S^d W^\ast$ with $\disc(\bft)\neq 0$. Then all fibers of the projective map $
    \Phi_{\bft,\sym}
    $
    are finite.
\end{theorem}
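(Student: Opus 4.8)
The plan is to deduce Theorem~\ref{thm: fibre 0 gen t} from the combination of Theorem~\ref{thm: forme binarie} and the general geometric principle recorded in \autoref{prop: regular implies finite}, exactly along the lines sketched in \autoref{rmk: it is enough to prove that phi is defined everywhere}. The projective characteristic polynomial map $\Phi_{\bft,\sym} : \bbP S^d W^* \dashto \bbP(\bfrho_{D(1,d)})$ is a rational map between projective varieties; its indeterminacy locus consists of those $[T]$ for which $\phi^\bft_T(\lambda) = \lambda^{D(1,d)}$, i.e.\ the tensors all of whose $\bft$-eigenvalues vanish. So the first step is: show this indeterminacy locus is empty, hence $\Phi_{\bft,\sym}$ is a morphism defined on all of $\bbP S^d W^*$.

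\textbf{Step 1: the indeterminacy locus is empty.} By \autoref{lemma: only 0 iff H empty} (applied in the symmetric setting, with $\calRes$ replaced by $\calDisc$ and $\res$ by $\disc$ via \autoref{coroll: degree determinant}), the condition $\phi^\bft_T(\lambda) = \lambda^{D(1,d)}$ holds if and only if $T \in \calDisc$ and the line $\langle T, \bft\rangle$ lies in $\calH_\calDisc$. But Theorem~\ref{thm: forme binarie} asserts precisely that $\calH_\calDisc = \emptyset$ when $n = 1$ and $d \geq 3$. Hence no such $T$ exists, and $\Phi_{\bft,\sym}$ has empty indeterminacy locus, so it is a genuine morphism $\bbP S^d W^* \to \bbP(\bfrho_{D(1,d)})$. (For $d=2$ the statement is classical and not part of the claim, since we assume $d\ge 3$ implicitly through the hypothesis $\disc(\bft)\neq0$ being the relevant nondegeneracy; if needed the $d=2$ case is the ordinary spectral theorem.)

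\textbf{Step 2: apply the projective-morphism dichotomy.} Now $\Phi_{\bft,\sym}$ is a morphism whose domain $\bbP S^d W^*$ is a projective space; its image is a closed subvariety $Y$ of $\bbP(\bfrho_{D(1,d)})$, and replacing the target by $Y$ we get a surjective morphism of projective varieties. \autoref{prop: regular implies finite} then gives the dichotomy: either $\Phi_{\bft,\sym}$ is constant, or all its fibers are finite. The map is not constant — for instance, the coefficient $c_1(T)$ of $\lambda^{D(1,d)-1}$ in $\res(T-\lambda\bft)$ is (up to sign) the directional derivative of $\disc$ at $T$ in the direction $\bft$, which is a nonzero linear form in the coefficients of $T$, so $\Phi_{\bft,\sym}$ is nonconstant already in its first component. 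Therefore all fibers of $\Phi_{\bft,\sym}$ are finite, which is the claim. The affine statement (Theorem~\ref{thm: main} for $n=1$, $\bft = \bfu_2$) follows since the affine fibers are contained in the projective ones up to the cone structure, and finiteness of projective fibers forces finiteness of the affine fibers away from $0$, while the fiber over $\lambda^{D(1,d)}$ is $\{0\}$ by Step~1.

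\textbf{Main obstacle.} Essentially all the work has been pushed into Theorem~\ref{thm: forme binarie}, which is already available; the present proof is the short bookkeeping step that assembles \autoref{prop: regular implies finite}, \autoref{lemma: only 0 iff H empty}, \autoref{coroll: degree determinant} and Theorem~\ref{thm: forme binarie}. The only point requiring a word of care is confirming that $\Phi_{\bft,\sym}$ is genuinely nonconstant so that the degenerate branch of the dichotomy in \autoref{prop: regular implies finite} is excluded — this is immediate from the linear-term computation above, but it should be stated explicitly rather than taken for granted.
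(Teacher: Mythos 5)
Your proof is correct and follows essentially the same route as the paper: the indeterminacy locus is shown to be empty via \autoref{thm: forme binarie} (via \autoref{lemma: only 0 iff H empty}), and then \autoref{prop: regular implies finite} gives finiteness. The only difference is that you explicitly verify non-constancy via the linear coefficient $c_1$, a point the paper leaves implicit; this is a mild and welcome addition but does not change the argument.
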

\begin{proof}
    \autoref{thm: forme binarie} shows that the fiber of the affine map $\Phi_{\bft,\sym}$ at $\lambda^{D(1,d)}$ is given set-theoretically by $\{0\}$. Therefore the projectivization
    \[
    \Phi_{\bft,\sym} : \bbP S^d W^* \to \bbP ( \bfrho_{D(1,d)})
    \]
    is a well-defined morphism of projective spaces. By \autoref{prop: regular implies finite}, we deduce that all fibers of $\Phi_{\bft,\sym}$ are finite.
\end{proof}
Applying \autoref{thm: fibre 0 gen t} to the case $\bft = \bfu_2$, we obtain \autoref{thm: main} for the case of binary forms. We conclude this section with an interesting observation for the case $(n,d)=(1,3)$ of binary cubics.
\begin{remark}\label{remark: binary cubic surjective}
    For $d = 3$, the map $\Phi_{\bft,\sym} : \bbP S^3 W^* \to \bbP (\rho_{4})$ is projective and dominant, hence surjective. As a consequence, the affine map $\Phi_{\bft,\sym} : S^3 W^* \to \bbC[\lambda]^{\mon}_4$ is surjective as well, and every univariate polynomial of degree $4$ is the characteristic polynomial of a binary form of degree $3$.
    
    Given a generic univariate polynomial $\phi(\lambda)$ of degree $4$, the condition $\res(T - \lambda \bft) = \phi(\lambda)$ defines a polynomial of four equations in four unknowns: the four unknowns are the coefficients of $T \in S^3 W^*$ and  the four equations are of degree $1,2,3,4$. Since the projectivization of $\Phi_{\bft,\sym}$ is well-defined, the polynomial system has no solutions at infinity; this guarantees that it has $1 \cdot 2 \cdot 3 \cdot 4 = 24$ solutions, which, by genericity, are distinct. This shows that the fiber $\Phi_{\bft,\sym}^{-1}(\phi(\lambda))$ has degree $24$ for every monic polynomial $\phi(\lambda)$ and it consists of $24$ distinct points when $\phi(\lambda)$ is generic. 
\end{remark}

\section{Characteristic polynomial of plane cubics}\label{sec: cubiche piane}
\label{section: proof of main for plane curves}
In this section we prove that there are no plane cubics having all eigenvalues equal to $0$. Fix $\dim W = 3$. It is a classical fact that the discriminant in this case is a hypersurface $\calDisc \subseteq \bbP S^3 W^*$ of degree $12$. Moreover, this hypersurface has a stratification into eight orbits for the action of the group $\GL(W)$. This stratification was known classically \cite{vinberg}, and a modern description is given in \cite{KoganMaza:ComputationCanonicalFormsTernaryCubics}. We are interested in a description of the tangent cones to $\calDisc$.

Similarly to \autoref{sec: forme binarie}, given $w \in \bbP W = \bbP^2$, write $H_w  = \{ f \in S^3 W^* : f (w) = 0\}$ for the hyperplane of forms vanishing at $w$; as before, we use angular brackets for multiplicities of a hypersurface.

For every orbit, we record a description of the tangent cone to $\calDisc$ at an element of the orbit, together with its multiplicity. The proof is purely computational; the code to perform it is provided in \cite{BryGesSt:PencilsCubics8Pts} and, for the reader's convenience, at 

\centerline {\href{https://fulges.github.io/code/GGTV_Eigenvalues/index.html}{\texttt{https://fulges.github.io/code/GGTV\_Eigenvalues}.}}
\begin{itemize}[leftmargin=*]
 \item Nodal cubic: Let $f$ be an irreducible cubic with a simple nodal singularity at $p$. Then $f$ is in the orbit of $x_0x_1^2 - x_2^2(x_2-x_0)$ and
 \[
  TC_{[f]} \calDisc = H_p .
 \]
In particular, $\mult_\calDisc(f) = 1$ and this is the locus of smooth points of $\calDisc$.
 \item Cuspidal cubic: Let $f$ be an irreducible cubic with a cuspidal singularity at $p$. Then $f$ is in the orbit of $x_0 x_1^2- x_2^3$ and
 \[
TC_{[f]}\calDisc = H_p^{\langle 2 \rangle}.
\]
In particular, $\mult_\calDisc(f) = 2$.
 \item Conic and secant line: Let $f = \ell \cdot q$ with $q \in S^2 W^*$ and $\ell \in S^1 W^*$ such that $V(\ell,q)$ consists of two distinct points. Then $f$ is in the orbit of $x_0(x_0^2+x_1x_2)$. If $\{p_1,p_2\} = V(\ell,q)$ then $f$ is singular at $p_1,p_2$ and 
 \[
  TC_{[f]}\calDisc = H_{p_1} \cup H_{p_2} .
 \]
In particular $\mult_\calDisc(f) = 2$.
\item Conic and tangent line: Let $f= \ell \cdot q$ with $q \in S^2 W^*$ and $\ell \in S^1 W^*$ such that $V(\ell,q)$ is a non-reduced $0$-dimensional scheme of length $2$ supported at a point $p$. Then $f$ is in the orbit of $x_0 (x_0x_1 + x_2^2)$.  If $p$ is the support point of the $0$-dimensional scheme $V(\ell,q)$, then $f$ is singular at $p$ and
 \[
TC_{[f]}\calDisc = H_p^{\langle 3 \rangle}.
\]
In particular, $\mult_\calDisc(f) = 3$. 
\item Triangle: Let $f = \ell_0\ell_1\ell_2$ with $\ell_0,\ell_1,\ell_2 \in S^1 W^*$ linearly independent. Then $f$ is in the orbit of $x_0x_1x_2$. If $p_0,p_1,p_2$ are the three pairwise intersections of the three lines, then $f$ is singular at $p_1,p_2,p_3$ and 
 \[
  TC_{[f]}\calDisc = H_{p_0} \cup H_{p_1} \cup H_{p_2} .
 \]
In particular $\mult_\calDisc(f) = 3$.

\item Asterisk: Let $f = \ell_0\ell_1\ell_2$ with $\ell_0,\ell_1,\ell_2 \in S^1 W^*$ pairwise linearly independent but with $\dim \langle \ell_0,\ell_1,\ell_2\rangle = 2$. Then $f$ is in the orbit of $x_0 x_1(x_0+x_1)$. The variety $V(\ell_1,\ell_2,\ell_3)$ consists of a single point $p$, the curve $f$ is a cone over $p$ and 
\[
TC_{[f]}\calDisc =  H_p^{\langle 4 \rangle}.
\]
In particular $\mult_\calDisc(f) = 4$.
\item Line and double line: Let $f = \ell_0^2\ell_1$ with $\ell_0,\ell_1\in S^1 W^*$ linearly independent. Then $f$ is in the orbit of $x_0^2x_1$. Moreover, $f$ is singular at every point of $V(\ell_0)$ and it is a cone over $p = V(\ell_0,\ell_1)$. We have 
\[
TC_{[f]}\calDisc =  H_p^{\langle 2 \rangle}  \cup \calDisc'_{\ell_0}
\]
where $\calDisc'_{\ell_0} = \{ g \in \bbP S^3 W^* : g |_{\ell_0 = 0} \text{ is singular}\}$. Notice $\deg \calDisc'_{\ell_0} = 4$ because it is a cone over the variety of singular binary cubics. In particular $\mult_\calDisc(f) = 2 + 4 = 6$.
\item Triple line: Let $f = \ell^3$. Then $f$ is in the orbit of $x_0^3$. Moreover, $f$ is singular at every point of $V(\ell)$. We have
\[
TC_{[f]}\calDisc = {\calDisc'}_{\ell}^{\langle 2 \rangle}.
\]
Since $\deg (\calDisc'_\ell) = 4$, we have we obtain $\mult_\calDisc(f) = 8$.
 \end{itemize}

\begin{theorem}\label{thm: binary cubics}
    If $n=2$ and $d=3$, then $\calH_{\calDisc}=\emptyset$.
\end{theorem}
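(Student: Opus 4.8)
The plan is to argue by contradiction. Suppose $L\in\calH_\calDisc$, so $L$ is a line in $\bbP S^3 W^*$ with $L\not\subseteq\calDisc$ whose intersection with $\calDisc$ is supported at a single point $[f]$. Since $\deg\calDisc=D(2,3)=12$, the local intersection multiplicity of $L$ and $\calDisc$ at $[f]$ equals $12$. As in the proof of \autoref{thm: forme binarie}, this multiplicity is at least $\mult_\calDisc([f])$, with equality whenever $L\not\subseteq TC_{[f]}\calDisc$; and the eight strata above show $\mult_\calDisc([f])\le 8$ for every $[f]\in\calDisc$, so we must have $L\subseteq TC_{[f]}\calDisc$. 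As $L$ is irreducible, it lies in a single irreducible component of the tangent cone, and the list above shows this component is either a hyperplane $H_p$, with $p$ a singular point of $\{f=0\}$, or — only in the \emph{line and double line} and \emph{triple line} strata — the quartic cone $\calDisc'_\ell$.

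Next I would dispose of the degenerate possibilities. If $L\subseteq\calDisc'_\ell$, note that both $f=\ell_0^2\ell_1$ and $f=\ell^3$ restrict to $0$ on the line $\{\ell=0\}$, so the surjective linear restriction map to binary cubics on $\{\ell=0\}$, whose kernel is $\ell\cdot S^2 W^*$, sends $L$ to a single point: if that point is $0$ then $\ell$ divides a generator of $L$, hence $L\subseteq\ell\cdot S^2 W^*\subseteq\calDisc$, a contradiction, and otherwise $L$ restricts to the class of a fixed singular binary cubic. Likewise, if $L\subseteq H_p$ and $[f]$ is singular at $p$, I would first discard the case that $L$ is contained in the linear space $\Sigma_p$ of cubics singular at $p$, since then $L\subseteq\Sigma_p\subseteq\calDisc$.

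In the remaining configurations the contact order of $L$ with $\calDisc$ at $[f]$ has to be pinned down. For $L\subseteq\calDisc'_\ell$ restricting to a fixed singular binary cubic, I would use the explicit tangent cone ${\calDisc'_\ell}^{\langle 2\rangle}$ to reduce the computation of the contact order at $[f]$, as in the binary case of \autoref{thm: forme binarie}, to the discriminant of binary cubics — for which $\calH_\calDisc=\emptyset$ — thereby keeping the total contact order below $12$. For $L\subseteq H_p$ with the generic member of $L$ smooth at $p$, I would run a stratum-by-stratum analysis: in each of the strata \emph{nodal}, \emph{cuspidal}, \emph{conic and secant line}, \emph{conic and tangent line}, \emph{triangle}, \emph{asterisk}, a direct local computation with the explicit tangent cone given above bounds the contact order of a line through $[f]$ away from $12$. (More conceptually: since $L\not\subseteq\calDisc$ the generic member of the pencil $L$ is a smooth cubic, and the associated minimal elliptic fibration $X\to\bbP^1\cong L$ has $X$ rational, so $12=e(X)=\sum_v e(X_v)$; as the root lattices of the fibers of a rational elliptic surface embed into $E_8$, no single fiber reaches $e(X_v)=12$, which forces $L$ to have at least two singular members and contradicts $L\cap\calDisc=\{[f]\}$.)

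The main obstacle is the high-multiplicity strata — the \emph{asterisk} with $\mult_\calDisc([f])=4$, the \emph{line and double line} with $\mult_\calDisc([f])=6$, and especially the \emph{triple line} $f=\ell^3$ with $\mult_\calDisc([f])=8$ — where containment of $L$ in the tangent cone only guarantees contact order $\ge\mult_\calDisc([f])+1\le 9$. Ruling out contact order exactly $12$ there requires controlling $\calDisc$ to higher order along the quartic cone $\calDisc'_\ell$, equivalently showing that every pencil of cubics through $\ell^3$ (or through $\ell_0^2\ell_1$) acquires a second singular member; this is precisely where the classification of plane cubics and a careful analysis of the base locus of the pencil do the real work.
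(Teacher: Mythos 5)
Your initial reduction agrees with the paper's: since $\mult_\calDisc(f)\le 8<12$ on every stratum, a line $L\in\calH_\calDisc$ through $f$ would have to lie inside $TC_f\calDisc$, hence inside a single irreducible component of it, and you correctly discard the trivially forbidden containments $L\subseteq\Sigma_p$ and $L\subseteq\ell\cdot S^2W^*$. But the substantive part of the argument — that in the remaining configurations the contact order of $L$ with $\calDisc$ at $f$ is still strictly less than $12$ — is never actually carried out. For the low-multiplicity strata you invoke an unspecified ``direct local computation''; for $L\subseteq\calDisc'_\ell$ you propose ``reducing to the discriminant of binary cubics,'' which does not work as stated, since the restriction of $L$ to binary cubics on $\{\ell=0\}$ collapses to a single point and there is no degree induction available as in the $n=1$ case; and you openly concede that the high-multiplicity strata (\emph{asterisk}, \emph{line and double line}, \emph{triple line}) remain unresolved. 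That is precisely where the paper's proof does its work: it parametrizes $L$ as $f+tg$, writes $\disc(f+tg)=\sum_i c_i t^i$, and verifies by a stratum-by-stratum \texttt{Macaulay2} ideal-membership check — imposing $g\in\Theta$ for each component $\Theta\subseteq TC_f\calDisc$ and, for the last two strata, normalizing $g|_{\ell=0}$ by the stabilizer of $f$ to tame the quartic component $\calDisc'_\ell$ — that the vanishing of $c_0,\dots,c_{11}$ forces $\disc(g)=0$, contradicting $g\notin\calDisc$.

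Your parenthetical is the most interesting part and is a genuinely different route: if $L\cap\calDisc$ were a single point, the pencil $L$ would have exactly one singular member; after excluding a fixed component, its minimal resolution is a relatively minimal rational elliptic surface with $e(X)=12$ and $\rho(X)=10$, so the non-identity fiber components span a lattice of rank at most $8$, whereas any fiber with Euler number $12$ (type $I_{12}$ or $I_6^*$) contributes rank $11$ or $10$. If the requisite checks were spelled out (that the blowup of the base scheme is already relatively minimal and that the Shioda–Tate rank bound applies in each configuration), this single argument would dispose of all eight strata at once with no symbolic computation, and would be more illuminating than the paper's brute-force verification. Oddly, you present it as a side remark applicable only to some configurations and then declare the high-multiplicity strata the main open obstacle, which indicates you did not notice the argument is global. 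As written, neither the computational nor the conceptual route is completed, so the proposal has a genuine gap.
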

\begin{proof}
    Let $f\in\calDisc$ and let $L\in\Gr(2,S^3 W^*)$ be a line through $f$; by definition, $L \in \calH_{\calDisc}$ if and only if $L$ intersects $\calDisc$ with multiplicity $12$ at $f$. If $L\not\subseteq TC_f\calDisc$ then the intersection multiplicity of $\calDisc$ and $L$ at $f$ equals $\mult_{\calDisc}(f)$. From the classification of the eight $\GL(W)$-orbits, we have $\mult_{\calDisc}(f) \leq 8$, so $L \notin \calH_{\calDisc}$.  
    
    Assume that $L\subseteq TC_f\calDisc$. For each of the eight $\GL(W)$-orbits, and let $f$ be an element in such orbit and $g = \sum_{i+j+k=3}a_{ijk}x_0^i x_1^j x_2^k$ be some element of $L$ with $g \notin \calDisc$. Parametrize $L$ as $f_t=f+tg$, and write $\disc(f_t)=\sum_{i=0}^{12}c_it^i$ where each coefficient $c_k$ is a homogeneous polynomial of degree $k$ in the variables $a_{ijk}$. The assumption $L\subseteq TC_f\calDisc$ guarantees $g\in TC_f\calDisc$, which imposes certain polynomial conditions on the coefficients of $g$. The condition $L \in \calH_{\calDisc}$ guarantees $\disc(f_t)=t^{12}$, which is equivalent to the fact that the coefficients $c_0,\dots,c_{11}$ vanish. Using the software \texttt{Macaulay2} \cite{M2}, we verify that for each orbit the ideal $J=\sqrt{\langle c_0,\dots,c_{11}\rangle}$ contains the polynomial $\disc(g)$. In other words, imposing that $\disc(f_t)=t^{12}$, implies that  $g\in\calDisc$, which is a contradiction.

The code to run this computation is available at 

\centerline{
\href{https://fulges.github.io/code/GGTV_Eigenvalues/index.html}{\texttt{https://fulges.github.io/code/GGTV\_Eigenvalues}}.}

The calculation requires some technical speed up. In particular, for each component $\Theta$ of $TC_f\calDisc$, we manually impose $g \in \Theta$ and rather than proving $\disc(g) \in J$ we prove that a suitable power of $\disc(g)$ belongs to $J + I_\Theta$, where $I_\Theta$ is the defining ideal of $\Theta$. 
    
    For the last two orbits there is a nonlinear component $\Theta$ in $TC_f\calDisc$, which makes the computation harder. In these cases, the problem can be further reduced. The condition $g\in \calDisc'_{x_0=0}$ implies that $g|_{x_0=0}$ is a singular binary cubic in the variables $x_1$ and $x_2$. When $f=x_0^2x_1$ is the chosen representative of the seventh orbit, we may normalize $g$ via the action of the isotropy group of $f$ in $\GL(\mathbb{C}^3)$. In this case, we may assume $g|_{x_0=0}$ is one of the following $5$ binary cubics: $x_1^3$, $x_2^3$, $x_2^2(x_1+x_2)$, $x_1^2x_2$, $x_1x_2^2$.  Similarly, when $f=x_0^3$, $g_{x_0=0}$ can be normalized to be either $x_1^3$ or $x_1^2x_2$. 
\end{proof}

We use the result above to show that the fibers of $\Phi_{\bft,\sym}$ are finite in the case of plane cubics.
\begin{theorem}\label{thm: fibre 0 gen t n = 2 d = 3}
    Let $\bft\in S^3 W^\ast$ with $\disc(\bft)\neq 0$. Then all fibers of the projective map $
    \Phi_{\bft,\sym}
    $
    are finite.
\end{theorem}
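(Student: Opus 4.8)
The plan is to run the proof of \autoref{thm: fibre 0 gen t} verbatim, with \autoref{thm: binary cubics} playing the role that \autoref{thm: forme binarie} played in the binary case.

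First I would invoke \autoref{thm: binary cubics}, which gives $\calH_\calDisc = \emptyset$ for ternary cubics. Combined with \autoref{lemma: only 0 iff H empty} — applied to our fixed $\bft$, for which $\disc(\bft) \neq 0$ and hence $\res(\bft) \neq 0$, and restricted to the symmetric subspace $S^3 W^* \subseteq W^* \otimes S^2 W^*$ — this shows that no nonzero $T \in S^3 W^*$ can satisfy $\phi^\bft_T(\lambda) = \lambda^{D(2,3)} = \lambda^{12}$: indeed such a $T$ would force the line $\langle T, \bft\rangle$ to lie in $\calH_\calDisc$. Equivalently, the fiber of the affine map $\Phi_{\bft,\sym} : S^3 W^* \to \bbC[\lambda]^{\mon}_{12}$ over $\lambda^{12}$ is set-theoretically $\{0\}$.

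Next, since the $i$-th component $c_i$ of the affine map $\Phi_{\bft,\sym}$ is homogeneous of degree $i$ in the coordinates of $T$, the indeterminacy locus of the induced rational map $\Phi_{\bft,\sym} : \bbP S^3 W^* \dashto \bbP(\bfrho_{12})$ is exactly $\{[T] : c_1(T) = \cdots = c_{12}(T) = 0\}$, which by the previous step is empty. Hence $\Phi_{\bft,\sym}$ is an everywhere-defined morphism out of $\bbP S^3 W^* \cong \bbP^9$; its image $Y \subseteq \bbP(\bfrho_{12})$ is closed because $\bbP^9$ is complete, so $\Phi_{\bft,\sym} : \bbP^9 \to Y$ is a surjective morphism of projective varieties. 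The map is moreover non-constant: by \autoref{lemma: characteristic polynomial}, a general $T$ has $12$ distinct $\bft$-eigenvalues, so the characteristic polynomial is not the same for all $T$. Applying \autoref{prop: regular implies finite} then forces $\dim Y = 9$ and all fibers of $\Phi_{\bft,\sym}$ to be finite. The special case $\bft = \bfu_3$ recovers \autoref{thm: main} for $(n,d) = (2,3)$.

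The only genuinely demanding ingredient is \autoref{thm: binary cubics} itself — the orbit-by-orbit analysis showing that constraining $g$ to $TC_f\calDisc$ and imposing $\disc(f+tg) = t^{12}$ already forces $\disc(g) = 0$ — which is a (computer-assisted) computation, not part of the present argument. Given that, the remainder is formal: one checks that the projectivized characteristic polynomial map is a morphism (immediate from $\calH_\calDisc = \emptyset$) and is non-constant, and then \autoref{prop: regular implies finite} closes the argument, exactly as in \autoref{thm: fibre 0 gen t}.
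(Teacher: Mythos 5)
Your proposal is correct and follows essentially the same route as the paper's proof: invoke \autoref{thm: binary cubics} and \autoref{lemma: only 0 iff H empty} to conclude that the indeterminacy locus of the projectivized map is empty, then apply \autoref{prop: regular implies finite}. The brief extra remarks you make (closedness of the image by completeness, non-constancy of the map) are details the paper leaves implicit but do not change the argument.
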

\begin{proof}
    We proceed as in the proof of \autoref{thm: fibre 0 gen t}.
    Let $T\in S^3 W^\ast$. By \autoref{lemma: only 0 iff H empty},  $\Phi_{\bft,\sym}(T)=\lambda^{12}$ if and only if $\langle T,\bft\rangle\in\calH_{\calDisc}$. By \autoref{thm: binary cubics}, $\calH_{\calDisc}=\emptyset$, and so the fiber of $\lambda^{12}$ consists only of the zero polynomial. Hence the map 
    \[
        \Phi_{\bft,\sym} : \bbP S^d W^* \to \bbP ( \bfrho_{12})
    \]
    is a well-defined projective morphism. All its fibers are finite by \autoref{prop: regular implies finite}.
\end{proof}
As in the previous section, we obtain \autoref{thm: main} for the case of plane cubics by applying \autoref{thm: fibre 0 gen t n = 2 d = 3} in the case $\bft = \bfu_3$.

{
\bibliographystyle{alphaurl}
\bibliography{referenzeautovalori}
}
\end{document}